\date{}\textheight=27.2cm \voffset=-3.6cm \textwidth=18cm \hoffset=-3cm
\def\_email#1@#2\q_nil{\href{mailto:#1@#2}{{\emailfont #1\emailampersat #2}}}
\newcommand\emailampersat{{\color{cyan}\small@}}
\newtheorem{thm}{Theorem}[section]
\newtheorem{lma}[thm]{Lemma}
\newtheorem{rmk}{Remark}
\newtheorem{prop}[thm]{Proposition}
\newtheorem{claim}{Claim}
\numberwithin{equation}{section}
\title{On the asymptotic limit of steady state Poisson--Nernst--Planck equations with steric effects}
\author{Jhih-Hong Lyu \thanks{Department of Mathematics, National Taiwan University, Taipei 10617, Taiwan (\tt d06221001@ntu.edu.tw).}, Tai-Chia Lin \thanks{Department of Mathematics, National Taiwan University, Taipei 10617, Taiwan; National Center for Theoretical Sciences, Mathematics Division, Taipei 10617, Taiwan ({\tt tclin@math.ntu.edu.tw}).}}
\begin{document}
\maketitle
\begin{abstract}
When ions are crowded, the effect of steric repulsion between ions becomes significant and the conventional Poisson--Boltzmann (PB) equation (without steric effect) should be modified.
For this purpose, we study the asymptotic limit of steady state Poisson--Nernst--Planck equations with steric effects (PNP-steric equations).
By the assumptions of steric effects, we transform steady state PNP-steric equations into a PB equation with steric effects (PB-steric equation) which has a parameter $\Lambda$ and positive constants $\lambda_i$'s (depend on the radii of ions and solvent molecules).
The nonlinear term of PB-steric equation is mainly determined by a Lambert type function which represents the concentration of solvent molecules. As $\Lambda=0$, the PB-steric equation becomes the conventional PB equation but as $\Lambda>0$, a large $\Lambda$ makes the steric repulsion (between ions and solvent molecules) stronger.
This motivates us to find the asymptotic limit of PB-steric equation as $\Lambda$ goes to infinity.
Under the Robin (or Neumann) boundary condition, we prove theoretically and numerically that the PB-steric equation has a unique solution $\phi_\Lambda$ which converges to solution $\phi^*$ of a modified PB (mPB) equation as $\Lambda$ tends to infinity.
Our results show that the limiting equation of PB-steric equation (as $\Lambda$ goes to infinity) is a mPB equation which has the same form (up to scalar multiples) as those mPB equations in \cite{1942bikerman,1997borukhov,2007kilic,2009li,2009li2,2013li,2011lu}.
Therefore, the PB-steric equation can be regarded as a generalized model of mPB equations.
\vspace{5mm}\\
{\small {\bf Key words.} PNP-steric equations, mPB equation, steric effects}
\vspace{1mm}\\
{\small {\bf AMS subject classifications.}  35C20, 35J60, 35Q92}\end{abstract}

\section{Introduction}
\label{sec1}

As a well-known model of ion transport, the Poisson--Nernst--Planck (PNP) equations are important in the study of many physical and biological phenomena (cf. \cite{1995andelman,1997barcilon,2005coalson,1993eisenberg,2007eisenberg}).
The PNP equations represent ions as point particles without size.
However, in crowded ions, steric repulsions may appear due to ion sizes so the PNP equations should be modified (cf. \cite{2002abbas,2019aitbayev,2011bazant,2017bates,2021baude,2016chen,2019ding,2020ding,2011eisenberg,2018gavish,2008grochowski,2012horng,2017huang,2010kalcher,2007kilic,2007kornyshev,2020Park,2018siddiqua}).
To include the steric effect of ion sizes, we use the approximate Lennard--Jones potential to develop a new model called the Poisson--Nernst--Planck equations with steric effects (PNP-steric equations) which have parameters depending on ion radii (cf. \cite{2014lin}).
Recently, the PNP-steric equations have been used to study the selectivity and gating of ion channels \cite{2020ding2,2012horng,2019horng}, the anodic dissolution at high current densities \cite{2020kohn}, and the slit-shaped nanopore conductance  \cite{2019ding}.
Hence the PNP-steric equations become a useful model which describes ion transport with steric effects.

For the mixture of ions and solvent molecules, the PNP-steric equations can be denoted as
\begin{align}
\label{eq:1.01}
&\frac{\partial c_i}{\partial t}+\nabla\cdot\mathcal{J}_i=0,\quad\quad i=0,1,\dots,N,
\\\label{eq:1.02}
&\mathcal{J}_i=-D_i\nabla c_i-\frac{D_ic_i}{k_BT}z_i\text{e}\nabla\phi-\frac{D_ic_i}{k_BT}\sum_{j=0}^Ng_{ij}\nabla c_j,
\\\label{eq:1.03}&
-\nabla\cdot\left(\epsilon\nabla\phi\right)=\rho_0+\sum_{i=0}^Nz_i\text{e}c_i,
\end{align}
where $N$ is the number of ion species, $\mathcal{J}_i$ is the flux density, $D_i$ is the diffusion constant for $i=0,1,\dots,N$.
In addition, $c_0$ is the concentration of solvent molecules with the valence $z_0=0$, the radius $a_0$, and $c_i$ is the concentration of the $i$th ion species with the valence $z_i\neq0$ and the radius $a_i$ for $i=1,\dots,N$.
Moreover, $\phi$ is the electrostatic potential, $\epsilon$ is the dielectric function, $\rho_0$ is the permanent charge density function, $k_B$ is the Boltzmann constant, $T$ is the absolute temperature, $\text{e}$ is the elementary charge and \begin{equation}
\label{eq:1.04}
g_{ij}=S_{\sigma}\epsilon_{ij}(a_i+a_j)^{12}\geq0,
\end{equation}
where $g_{ij}$ represents the strength of steric repulsion, $\epsilon_{ij}$ is the energy coupling constant, and $S_{\sigma}\sim\sigma^{d-12}$ for the dimension $d\leq3$ (cf. \cite{2014lin}). 

To get the steady state of equations \eqref{eq:1.01}--\eqref{eq:1.03}, we set $\mathcal{J}_i=0$ for $i=0,1,\dots,N$.
Then equation \eqref{eq:1.02} implies
\begin{equation}
\label{eq:1.05}
D_i\ln c_i+\frac{D_i}{k_BT}z_i\text{e}\phi+\frac{D_i}{k_BT}\sum_{j=0}^Ng_{ij}c_j=\mu_i,\quad i=0,1,\dots,N,
\end{equation}
where $\mu_i$ is constant for $i=0,1,\dots,N$.
Without loss of generality, we set $D_i=k_BT=\text{e}=1$ and equations \eqref{eq:1.03} and \eqref{eq:1.05} become
\begin{align}
\label{eq:1.06}
&\ln c_i+z_i\phi+\sum_{j=0}^{N}g_{ij}c_j=\mu_i\quad\text{for}~i=0,1,\dots,N,\\
\label{eq:1.07}&-\nabla\cdot\left(\epsilon\nabla\phi\right)=\rho_0+\sum_{i=1}^Nz_ic_i\quad\text{in}~\Omega,\end{align}
where $\Omega$ is a bounded smooth domain in $\mathbb{R}^d$.
Under the assumptions of $g_{ij}$ and $\mu_i$ (see the assumptions of steric effects (A1) and (A2) below), equation \eqref{eq:1.06} has unique smooth solutions $c_i=c_i(\phi)$, for $\phi\in\mathbb R$ and $i=0,1,\dots,N$, so equation \eqref{eq:1.07} becomes a single nonlinear elliptic equation (see equation \eqref{eq:1.12}).
Notice that equation \eqref{eq:1.06} may have multiple solutions if $g_{ij}$'s satisfy the other (different from (A1)) conditions in \cite{2015lin}.
Hereafter, the boundary condition of equation \eqref{eq:1.07} is either the Robin boundary condition
\begin{equation}
\label{eq:1.08}
\phi+\eta\frac{\partial\phi}{\partial\nu}=\phi_{bd}\quad\text{on}~\partial\Omega,
\end{equation}or the Neumann boundary condition
\begin{equation}
\label{eq:1.09}
\frac{\partial\phi}{\partial\nu}=0\quad\text{on}~\partial\Omega,
\end{equation}where $\phi_{bd}\in\mathcal{C}^2(\overline{\Omega})$ is the extra electrostatic potential and $\eta\geq0$ is a constant related to the surface dielectric constant.

Volume exclusion is considered to develop the modified Poisson--Boltzmann (mPB) equation with steric effects (cf. \cite{1997borukhov}). When volume exclusion occurs, ions and solvent molecules are well separated.
This motivates us to assume that ions and solvent molecules have strong repulsion in order to separate ions and solvent molecules.
To describe strong repulsion of ions and solvent molecules, we set $g_{ij}=\Lambda\tilde{g}_{ij}$ for $i,j=0,1,\dots,N$, where $\Lambda$ is a large parameter tending to infinity and each  $\tilde{g}_{ij}$ is a positive constant independent of $\Lambda$.
The main goal of this paper is to study the asymptotic limit of equations \eqref{eq:1.06} and \eqref{eq:1.07} as $\Lambda$ goes to infinity.
The singularity of matrix $\tilde{G}=[\tilde{g}_{ij}]$ plays a crucial role on this problem.
Suppose that $\tilde{G}$ is nonsingular with inverse matrix $\tilde{G}^{-1}$ and ${\bf c}_{\Lambda}=[c_{0,\Lambda},\dots,c_{N,\Lambda}]^{\mathsf{T}}$ is a solution of equation \eqref{eq:1.06}, where $c_{i,\Lambda}$ is smooth function of $\phi$ for $i=0,1,\dots,N$.
Then we differentiate the equation \eqref{eq:1.06} with respect to $\phi$ and obtain $(D_{\Lambda}+\Lambda\tilde{G})\frac{\mathrm{d}{\bf c}_{\Lambda}}{\mathrm{d}\phi}=-{\bf z}$, where $D_{\Lambda}$ is a diagonal matrix with diagonal entries $(c_{i,\Lambda})^{-1}$ and ${\bf z}=[z_0,\cdots,z_N]^{\mathsf{T}}$.
Suppose that $D_{\Lambda}$ is uniformly bounded for $\Lambda\geq1$, and $c_{i,\Lambda}$ converges to $c_i^*$ in $\mathcal C^1(\mathbb R)$ as $\Lambda$ tends to infinity.
Then ${\bf c}^*=[c_0^*,\cdots,c_N^*]^{\mathsf{T}}$ satisfies
\[\frac{\mathrm{d}{\bf c}^*}{\mathrm{d}\phi}=-\lim_{\Lambda\to\infty}(D_\Lambda+\Lambda\tilde{G})^{-1}{\bf z}=\lim_{\Lambda\to\infty}\frac{\mathrm{d}{\bf c}_{\Lambda}}{\mathrm{d}\phi}=-\lim_{\Lambda\to\infty}\Lambda^{-1}\tilde{G}^{-1}(I+\Lambda^{-1}D_{\Lambda}\tilde{G}^{-1})^{-1}{\bf z}={\bf 0},
\]and hence, each $c_i^*$ becomes constant which is trivial.
Therefore, when $D_\Lambda$ is uniformly bounded for $\Lambda\geq1$, matrix $\tilde{G}$ must be singular in order to have nontrivial limiting functions $c_i^*=c_i^*(\phi)$ for $i=0,1,\dots,N$.

In the rest of this paper, we use the assumptions of steric effects given by
\begin{itemize}
\item[(A1)] $g_{ij}=\Lambda\lambda_i\lambda_j$ for $i,j=0,1,\dots,N$,
\item[(A2)] $\mu_i=\Lambda\lambda_i\tilde{\mu}_0+\hat{\mu}_i$ for $i=0,1,\dots,N$,
\end{itemize}
where $\lambda_i$, $\tilde{\mu}_0$ and $\hat{\mu}_i$ are positive constants independent of $\Lambda$.
Note that (A1) assures that $\tilde{G}=[\lambda_i\lambda_j]$ is a singular Gramian matrix which is rank one.
By (A1) and (A2), equation \eqref{eq:1.06} has unique solutions $c_{i,\Lambda}=c_{i,\Lambda}(\phi)$ satisfying
\begin{align}
\label{eq:1.10}
&c_{i,\Lambda}=(c_{0,\Lambda})^{\lambda_i/\lambda_0}\exp(\bar{\mu}_i-z_i\phi)\quad\text{for}~\phi\in\mathbb R~\text{and}~i=1,\dots,N,\\
\label{eq:1.11}&
\ln c_{0,\Lambda}+\Lambda\lambda_0\sum_{j=0}^N\lambda_j(c_{0,\Lambda})^{\lambda_j/\lambda_0}\exp(\bar{\mu}_j-z_j\phi)=\mu_0\quad\text{for}~\phi\in\mathbb R,
\end{align}
where $\bar{\mu}_i=\hat{\mu}_i-\frac{\lambda_i}{\lambda_0}\hat{\mu}_0$ is a constant independent of $\Lambda$ for $i=0,1,\dots,N$.
Then we apply the implicit function theorem on \eqref{eq:1.11} and obtain that $c_{0,\Lambda}(\phi)$ is a positive smooth function (see Proposition~\ref{prop:2.1}).
Hence equation \eqref{eq:1.07} becomes a nonlinear elliptic equation which is called a Poisson--Boltzmann equation with steric effects (PB-steric equation) and is expressed as
\begin{equation}
\label{eq:1.12}
-\nabla\cdot(\epsilon\nabla\phi_{\Lambda})=\rho_0+f_{\Lambda}(\phi_{\Lambda})\quad\text{in}~\Omega,
\end{equation}
where function $f_\Lambda=f_\Lambda(\phi)$ is denoted as
\begin{equation}
\label{eq:1.13}
f_\Lambda(\phi)=\sum_{i=1}^Nz_ic_{i,\Lambda}(\phi)\quad\text{for}~\phi\in\mathbb R.
\end{equation}
Notice that by \eqref{eq:1.10}, \eqref{eq:1.11} and \eqref{eq:1.13}, functions $f_\Lambda$ is the nonlinear term of PB-steric equation \eqref{eq:1.12}, and is mainly determined by the Lambert type function $c_{0,\Lambda}$ (given by \eqref{eq:1.11}) which represents the concentration of solvent molecules.
The main difficulty of this paper is to establish the convergence of $c_{i,\Lambda}$ and $\phi_\Lambda$ as $\Lambda$ tends to infinity.

By the implicit function theorem on Banach spaces (cf. \cite[Theorem 15.1]{1973deimling}), we prove that $c_{i,\Lambda}$ converges to $c_i^*$ in space $\mathcal{C}^m\left[a,b\right]$ as $\Lambda$ tends to infinity for $m\in\mathbb N$ and $a<b$ (see Propositions~\ref{prop:2.5} and \ref{prop:2.6}).
Here function $c_i^*$ satisfies
\begin{align}
\label{eq:1.14}
&c_i^*(\phi)=(c_0^*(\phi))^{\lambda_i/\lambda_0}\exp\left(\bar{\mu}_i-z_i\phi\right)>0\quad\text{for}~\phi\in\mathbb R~\text{and}~i=1,\dots,N,\\
\label{eq:1.15}
&\sum_{i=0}^N\lambda_ic_i^*\left(\phi\right)=\sum_{i=0}^N\lambda_i(c_0^*(\phi))^{\lambda_i/\lambda_0}\exp(\bar{\mu}_i-z_i\phi)=\tilde{\mu}_0\quad\text{for}~\phi\in\mathbb R,
\end{align}
where $\bar{\mu}_i=\hat{\mu}_i-\frac{\lambda_i}{\lambda_0}\hat{\mu}_0$.
Thus function $f_\Lambda$ also converges to $f^*$ in space $\mathcal{C}^m\left[a,b\right]$ as $\Lambda$ goes to infinity for $m\in\mathbb N$ and $a<b$, where
\begin{equation}
\label{eq:1.16}
f^*(\phi)=\sum_{i=1}^Nz_ic_i^*(\phi)
\end{equation}
(see Figure~\ref{fig1}). Note that for $\Lambda>0$, function $f_\Lambda$ is strictly decreasing and unbounded on the entire space $\mathbb R$ (see Propositions~\ref{prop:2.2} and \ref{prop:2.4}), and so does function $f_{PB}(\phi)=\sum_{i=1}^Nz_i\exp(-z_i\phi)$ of the conventional Poisson--Boltzmann (PB) equation $-\nabla\cdot(\epsilon\nabla\phi)=\rho_0+f_{PB}(\phi)$ in $\Omega$.
Besides, function $f^*$ is also strictly decreasing but bounded on $\mathbb R$ (see  Propositions~\ref{prop:2.7} and \ref{prop:2.8}), and so does function $f_{mPB}(\phi)$ of the mPB equation \eqref{eq:1.18} in \cite{1942bikerman,1997borukhov,2007kilic}.
Obviously, $f_\Lambda$ cannot uniformly converge to $f^*$ on $\mathbb R$ as $\Lambda$ goes to infinity.
Hence we only have uniform convergence of $f_\Lambda$ on any bounded interval $\left[a,b\right]$ but not the entire space $\mathbb R$.
To get the asymptotic limit of equation \eqref{eq:1.12}, we firstly have to prove the uniform boundedness of $\phi_\Lambda$ (the solution of equation \eqref{eq:1.12}) with respect to $\Lambda$ (see Lemmas~\ref{lma:3.2} and \ref{lma:4.1}) in order to use the convergence of $f_\Lambda$ in space $\mathcal{C}^m[a,b]$ for $m\in\mathbb N$ and $a<b$.
Here equation \eqref{eq:1.10} is crucial for the proof of Lemmas~\ref{lma:3.2} and \ref{lma:4.1}.
Notice that $\rho_0=\rho_0(x)$ may be any non-zero function and the boundary condition of equation \eqref{eq:1.12} may be the Robin or Neumann boundary condition but not the Dirichlet boundary condition so one cannot simply use the maximum principle on equation \eqref{eq:1.12} to prove Lemmas~\ref{lma:3.2} and \ref{lma:4.1}.
By Lemmas~\ref{lma:3.2} and \ref{lma:4.1}, we apply the $W^{2,p}$-estimate (cf. \cite[Theorem 15.2]{1959agmon}), the Schauder estimate (cf. \cite[Theorem 6.30]{1977gilbarg}) and the uniqueness of solution of equation \eqref{eq:1.17} to prove that $\phi_\Lambda$ converges to $\phi^*$ in space $\mathcal{C}^m(\Omega)$ as $\Lambda$ tends to infinity for $m\in\mathbb N$, where $\phi^*$ satisfies
\begin{equation}\label{eq:1.17}
-\nabla\cdot(\epsilon\nabla\phi^*)=\rho_0+f^*(\phi^*)\quad\text{in}~\Omega\end{equation}
with the Robin and Neumann boundary conditions \eqref{eq:1.08} and \eqref{eq:1.09}, respectively.
\begin{figure}
\centering\includegraphics[scale=0.68]{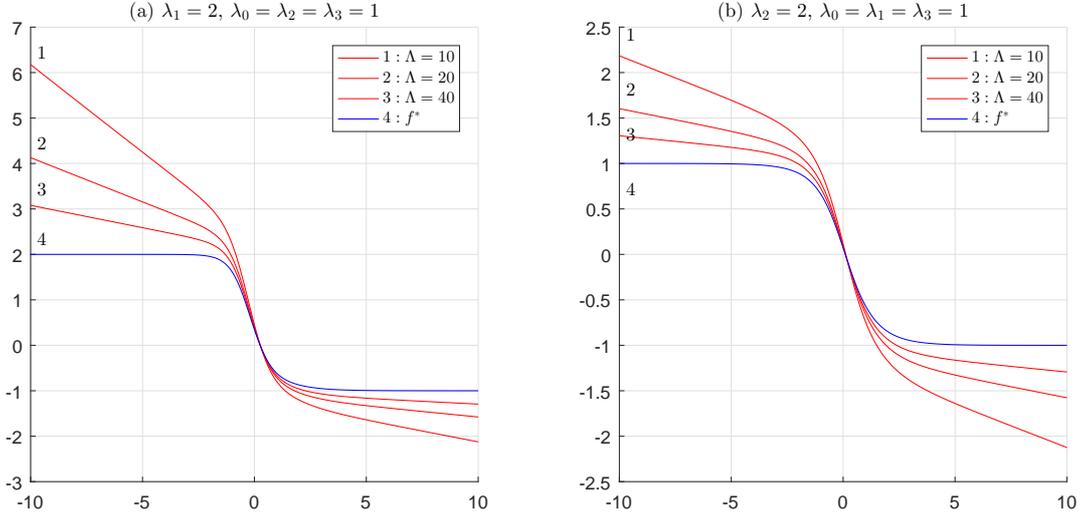}
\caption{Curves $1$--$3$ are profiles of function $f_\Lambda$ (defined in \eqref{eq:1.13}) with $\Lambda=10,20,40$, and curve $4$ is the profile of function $f^*$ (defined in \eqref{eq:1.16}).}
\label{fig1}
\end{figure}

Now we state the following theorems. 
\begin{thm}
\label{thm:1.1}
Let $\Omega\subsetneq\mathbb{R}^d$ be a bounded smooth domain, $\epsilon\in\mathcal{C}^{\infty}(\overline{\Omega})$ be a positive function, $\rho_0\in\mathcal{C}^{\infty}(\overline{\Omega})$, $\phi_{bd}\in\mathcal{C}^2(\partial\Omega)$, $z_0=0$, and $z_iz_j<0$ for some $i,j\in\{1,\dots,N\}$.
Assume that (A1) and (A2) hold true.
Then the PB-steric equation \eqref{eq:1.12} with the Robin boundary condition \eqref{eq:1.08} satisfies
\[\lim_{\Lambda\to\infty}\left\|\phi_\Lambda-\phi^*\right\|_{\mathcal{C}^m(\Omega)}=0\quad\text{for}~m\in\mathbb N,\]
where $\phi^*$ is the solution of equation \eqref{eq:1.17} with the Robin boundary condition \eqref{eq:1.08}.
\end{thm}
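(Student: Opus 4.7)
The plan is to combine existence and uniqueness for both \eqref{eq:1.12} and \eqref{eq:1.17} with a $\Lambda$-independent a priori bound on $\phi_\Lambda$, and then upgrade through elliptic regularity and a compactness argument. Existence and uniqueness of the Robin problem for \eqref{eq:1.12} and \eqref{eq:1.17} follow from the strict monotonicity of $-f_\Lambda$ and $-f^*$ established in Propositions~\ref{prop:2.2} and \ref{prop:2.7}: the associated energy functional is strictly convex, and since $f_\Lambda$ is unbounded on $\mathbb R$ (so its primitive is coercive) while $-f^*$ is bounded but monotone, in either case the boundary penalisation coming from the Robin term makes the functional coercive on $H^1(\Omega)$, and the direct method produces a unique minimiser.

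The main technical obstacle, as the authors indicate, is a uniform bound $\|\phi_\Lambda\|_{L^\infty(\Omega)} \leq M$ with $M$ independent of $\Lambda$ (Lemma~\ref{lma:3.2}); neither the maximum principle nor a direct energy estimate suffices because $\rho_0$ need not vanish and the boundary data is Robin rather than Dirichlet. My approach would be to exploit \eqref{eq:1.10}--\eqref{eq:1.11} as follows. Because $z_i z_j<0$ for some pair, $f_\Lambda$ is strictly decreasing from $+\infty$ to $-\infty$, and integrating \eqref{eq:1.12} against the Robin boundary condition yields
\[\int_\Omega f_\Lambda(\phi_\Lambda)\,dx + \int_{\partial\Omega}\frac{\epsilon}{\eta}\,\phi_\Lambda\,dS = \int_{\partial\Omega}\frac{\epsilon}{\eta}\,\phi_{bd}\,dS - \int_\Omega \rho_0\,dx,\]
which, combined with the explicit exponential structure of $c_{i,\Lambda}(\phi)$ from \eqref{eq:1.10} and the pinning of $c_{0,\Lambda}$ by \eqref{eq:1.11}, forces the average of $\phi_\Lambda$ into a bounded set independently of $\Lambda$. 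Testing \eqref{eq:1.12} with $\phi_\Lambda$ minus an $H^1$-extension of $\phi_{bd}$ then delivers a $\Lambda$-uniform $H^1$-estimate, since $\int f_\Lambda(\phi_\Lambda)\phi_\Lambda$ has the favourable sign coming from monotonicity. A Moser or De Giorgi iteration, exploiting that $f_\Lambda$ is monotone and bounded on any $\Lambda$-independent interval produced by the mean estimate, upgrades this to the desired $L^\infty$-bound; this is the step I expect to require the most care.

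With $\|\phi_\Lambda\|_\infty \leq M$ in hand, the right-hand side $\rho_0+f_\Lambda(\phi_\Lambda)$ is uniformly bounded in $L^\infty(\Omega)$, because Propositions~\ref{prop:2.5}--\ref{prop:2.8} give $f_\Lambda \to f^*$ in $\mathcal{C}^0[-M,M]$ and hence equiboundedness. The $W^{2,p}$-estimate for the Robin problem then furnishes a uniform $W^{2,p}$-bound for every $p<\infty$, so the family $\{\phi_\Lambda\}$ is relatively compact in $\mathcal{C}^{1,\alpha}(\overline\Omega)$. Any cluster point $\tilde\phi$ solves \eqref{eq:1.17} with the Robin condition \eqref{eq:1.08} by passing to the limit in the weak formulation and using the locally uniform convergence $f_\Lambda(\phi_\Lambda)\to f^*(\tilde\phi)$; uniqueness of $\phi^*$ identifies $\tilde\phi=\phi^*$ and forces the whole family to converge. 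To obtain convergence in $\mathcal{C}^m(\Omega)$, I would iterate the Schauder estimate on $\phi_\Lambda-\phi^*$: splitting $f_\Lambda(\phi_\Lambda)-f^*(\phi^*) = [f_\Lambda(\phi_\Lambda)-f^*(\phi_\Lambda)] + [f^*(\phi_\Lambda)-f^*(\phi^*)]$, the first summand tends to zero in $\mathcal{C}^{k,\alpha}$ by the $\mathcal{C}^m[-M,M]$-convergence of $f_\Lambda$, while the second is controlled by the previously established $\mathcal{C}^{k,\alpha}$-convergence of $\phi_\Lambda$, giving a self-improving bootstrap limited only by the smoothness of $\epsilon$, $\rho_0$, and $\phi_{bd}$.
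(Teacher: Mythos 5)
Your proposal follows the paper's architecture closely in the second half (compactness via $W^{2,p}$, identification of the limit through uniqueness, Schauder bootstrap with the splitting $f_\Lambda(\phi_\Lambda)-f^*(\phi^*)=[f_\Lambda-f^*](\phi_\Lambda)+[f^*(\phi_\Lambda)-f^*(\phi^*)]$), but there is a genuine gap in your treatment of the $\Lambda$-uniform $L^\infty$ bound, which is the crux of the whole theorem.

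Your integral identity is correct, but it does not ``force the average of $\phi_\Lambda$ into a bounded set independently of $\Lambda$.'' The obstruction is precisely the one the authors flag in the introduction: $f_\Lambda\to f^*$ and $f^*$ is \emph{bounded}, with range $(m^*,M^*)$. Thus for large $\Lambda$ the quantity $\int_\Omega f_\Lambda(\phi_\Lambda)\,\mathrm{d}x$ stays in a fixed bounded interval no matter how large $\|\phi_\Lambda\|_\infty$ becomes, so the identity gives no coercivity in $\phi_\Lambda$ that survives $\Lambda\to\infty$. The boundary term $\int_{\partial\Omega}\frac{\epsilon}{\eta}\phi_\Lambda\,dS$ only controls the trace, not interior values. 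For the same reason the Moser/De Giorgi step cannot get started: the usual truncation argument would require a $\Lambda$-independent level $c$ with $f_\Lambda(c)\leq -\|\rho_0\|_\infty$, but since $f^*$ is bounded such a level must escape to $+\infty$ with $\Lambda$ whenever $\|\rho_0\|_\infty>-m^*$. So the monotonicity of $f_\Lambda$, which is what your iteration exploits, degenerates exactly in the regime you are trying to control.

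The paper's Lemma~\ref{lma:3.2} avoids this by an entirely pointwise argument. One first subtracts the Robin solution $\psi$ of $-\nabla\cdot(\epsilon\nabla\psi)=\rho_0$ so that $\bar\phi_\Lambda=\phi_\Lambda-\psi$ satisfies $\nabla\cdot(\epsilon\nabla\bar\phi_\Lambda)=f_\Lambda(\phi_\Lambda)$ with homogeneous Robin data; the Robin condition rules out boundary extrema for large $\Lambda$, so at an interior maximum $x_k$ of $\bar\phi_\Lambda$ one gets $f_\Lambda(\phi_\Lambda(x_k))\geq 0$. Combining this sign with the factorisation \eqref{eq:1.10}, the uniform upper bound on $c_{0,\Lambda}$ from Lemma~\ref{lma:3.1}, and the relation \eqref{eq:2.04} evaluated at $\phi_\Lambda(x_k)$, one shows that if $\bar\phi_\Lambda(x_k)\to\infty$ then all concentrations $c_{i,\Lambda}(\phi_\Lambda(x_k))\to 0$, contradicting $\sum_i\lambda_ic_{i,\Lambda}\approx\tilde\mu_0>0$ from \eqref{eq:3.05}. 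This mechanism---the quasi-conservation law encoded in \eqref{eq:2.04}---is the ingredient missing from your proposal; without it, neither energy estimates nor iteration can produce a $\Lambda$-uniform $L^\infty$ bound.
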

\begin{thm}
\label{thm:1.2}
Let $\Omega\subsetneq\mathbb{R}^d$ be a bounded smooth domain, $\epsilon\in\mathcal{C}^{\infty}(\overline{\Omega})$ be a positive function, $z_0=0$, and $z_iz_j<0$ for some $i,j\in\{1,\dots,N\}$.
Assume that (A1) and (A2) hold true. Then
\begin{enumerate}
\item[(i)] Suppose that $\rho_0$ is constant. Then the PB-steric equation \eqref{eq:1.12} with the Neumann boundary condition \eqref{eq:1.09} has a unique constant solution $\phi_\Lambda$.
Moreover if $\rho_0\in(-M^*,-m^*)$, then $\lim_{\Lambda\to\infty}\phi_\Lambda=\phi^*$, where $\phi^*$ is the unique constant solution of equation \eqref{eq:1.17} with the Neumann boundary condition \eqref{eq:1.09}, and
\[M^*=\lim_{\phi\to-\infty}f^*\left(\phi\right)>0,\quad m^*=\lim_{\phi\to\infty}f^*\left(\phi\right)<0.
\]
\item[(ii)] Suppose that $\rho_0\in\mathcal{C}^\infty(\overline{\Omega})$ is a nonconstant function with $\int_\Omega\rho_0\,\mathrm{d}x=0$.
Then the PB-steric equation \eqref{eq:1.12} with the Neumann boundary condition \eqref{eq:1.09} has a unique solution $\phi_\Lambda$ satisfying
\[
\lim_{\Lambda\to\infty}\|\phi_{\Lambda}-\phi^*\|_{\mathcal{C}^m(\Omega)}=0\quad\text{for}~m\in\mathbb N,
\]
where $\phi^*$ is the unique solution of equation \eqref{eq:1.17} with the Neumann boundary condition \eqref{eq:1.09}.
\end{enumerate}
\end{thm}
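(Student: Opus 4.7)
The plan is to treat parts (i) and (ii) separately, reducing the former to a scalar algebraic problem and the latter to a strictly monotone elliptic problem that is then attacked by standard regularity theory.

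For part (i), since $\rho_0$ is constant, a constant $\phi_\Lambda$ solves \eqref{eq:1.12} if and only if $f_\Lambda(\phi_\Lambda)=-\rho_0$. Propositions~\ref{prop:2.2} and \ref{prop:2.4} ensure that $f_\Lambda:\mathbb R\to\mathbb R$ is a strictly decreasing bijection, so exactly one such constant exists. To upgrade this to uniqueness among all (not necessarily constant) solutions, I take two solutions $\phi_1,\phi_2$, subtract the equations, test with $\phi_1-\phi_2$, and integrate by parts; the Neumann condition kills the boundary term and leaves
\[\int_\Omega\epsilon|\nabla(\phi_1-\phi_2)|^2\,\mathrm{d}x=\int_\Omega\bigl(f_\Lambda(\phi_1)-f_\Lambda(\phi_2)\bigr)(\phi_1-\phi_2)\,\mathrm{d}x\le 0,\]
which combined with the strict monotonicity of $f_\Lambda$ forces $\phi_1\equiv\phi_2$. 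For convergence, since $-\rho_0\in(m^*,M^*)$ lies in the interior of the range of $f^*$, I pick $a<b$ with $f^*(a)>-\rho_0>f^*(b)$; by the convergence $f_\Lambda\to f^*$ on $[a,b]$ (Proposition~\ref{prop:2.6}) the same strict inequalities hold for $f_\Lambda$ once $\Lambda$ is large, so $\phi_\Lambda\in(a,b)$, and any limit point must satisfy $f^*(\phi_\infty)=-\rho_0$, identifying it as $\phi^*$.

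For part (ii), uniqueness follows from the identical monotonicity-energy argument. Existence I obtain by the direct method applied to the functional
\[E_\Lambda[\phi]=\int_\Omega\Bigl(\tfrac{1}{2}\epsilon|\nabla\phi|^2-F_\Lambda(\phi)-\rho_0\phi\Bigr)\mathrm{d}x\]
on $H^1(\Omega)$, where $F_\Lambda'=f_\Lambda$: strict concavity of $F_\Lambda$ makes $E_\Lambda$ strictly convex, while the unboundedness of $f_\Lambda$ together with the compatibility hypothesis $\int_\Omega\rho_0\,\mathrm{d}x=0$ (which removes the response of the linear term to additive constants) provides coercivity on $H^1(\Omega)$. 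The unique minimizer solves \eqref{eq:1.12} with the natural Neumann condition, and elliptic regularity lifts it to $\mathcal C^\infty(\overline\Omega)$.

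The convergence statement in part (ii) is the core of the work. Starting from the uniform bound $\|\phi_\Lambda\|_{L^\infty(\Omega)}\le C$ provided by Lemma~\ref{lma:4.1}, the convergence $f_\Lambda\to f^*$ in $\mathcal C^m[-C,C]$ delivers uniform control of the right-hand side of \eqref{eq:1.12}; the $W^{2,p}$-estimate then bounds $\{\phi_\Lambda\}$ uniformly in $W^{2,p}(\Omega)$, compact embedding extracts a subsequence converging in $\mathcal C^1(\overline\Omega)$, and Schauder bootstrapping upgrades this to $\mathcal C^m(\overline\Omega)$ for every $m$. Passing to the limit shows the limit solves \eqref{eq:1.17}, and the uniqueness of that limit problem (again by monotonicity, now with the strictly decreasing $f^*$) promotes subsequential convergence to convergence of the whole family. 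The principal obstacle is Lemma~\ref{lma:4.1} itself: the Neumann setting precludes any direct maximum-principle control, so one must exploit the explicit representation \eqref{eq:1.10} and the integral identity $\int_\Omega f_\Lambda(\phi_\Lambda)\,\mathrm{d}x=0$ obtained by integrating \eqref{eq:1.12}, to prevent $\phi_\Lambda$ from drifting to $\pm\infty$ as $\Lambda\to\infty$.
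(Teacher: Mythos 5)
Your proposal follows the paper's structure almost exactly: part (i) reduces to the scalar equation $f_\Lambda(\hat\phi_\Lambda)=-\rho_0$ using Propositions~\ref{prop:2.2}, \ref{prop:2.4}, \ref{prop:2.7}, \ref{prop:2.8}, uniqueness comes from testing the difference of two solutions and invoking strict monotonicity, and part (ii) runs the direct-method existence, the monotonicity uniqueness, and the $W^{2,p}$-plus-Schauder bootstrap of Section~\ref{sec3.2} once the uniform $L^\infty$ bound of Lemma~\ref{lma:4.1} is in hand. All of that is correct and matches the paper.

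The one place where your account diverges from the paper is the description of how Lemma~\ref{lma:4.1} itself is established, and this is exactly the step you flag as ``the principal obstacle.'' You write that the Neumann setting ``precludes any direct maximum-principle control'' and suggest combining the representation \eqref{eq:1.10} with the compatibility identity $\int_\Omega f_\Lambda(\phi_\Lambda)\,\mathrm{d}x=0$. That identity only tells you that $f_\Lambda(\phi_\Lambda)$ changes sign, i.e., that $\phi_\Lambda$ crosses the zero $\hat\phi_\Lambda$ of $f_\Lambda$ somewhere; by itself it gives no pointwise control, and the obvious routes to upgrade it (oscillation bounds via $W^{2,p}$) already presuppose a bound on $\|f_\Lambda(\phi_\Lambda)\|_\infty$, which is what one is trying to prove. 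The paper's actual argument is a maximum-principle argument after all: it introduces two auxiliary solutions $\psi_1,\psi_2$ of $-\nabla\cdot(\epsilon\nabla\psi)=\rho_0$ with Neumann data, normalized so that $\min_{\overline\Omega}\psi_1=0$ and $\max_{\overline\Omega}\psi_2=0$, writes the equation for $\tilde\phi_k=\phi_{\Lambda_k}-\psi_i-\hat\phi_{\Lambda_k}$ in the form $\nabla\cdot(\epsilon\nabla\tilde\phi_k)+c(x)\tilde\phi_k=-c(x)\psi_i$ with a strictly negative coefficient $c$, and applies Hopf's lemma at a hypothetical boundary extremum to force the extremum inside $\Omega$; the interior-extremum contradiction is then obtained exactly as in \eqref{eq:3.03}--\eqref{eq:3.05}. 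So the integral identity is not the engine; the key ideas are the normalized particular solutions $\psi_1,\psi_2$, comparison with the constant root $\hat\phi_\Lambda$ of $f_\Lambda$, and Hopf's lemma. It would be worth replacing your sketch with that argument, since as stated your route to the $L^\infty$ bound has a genuine gap.

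Two smaller remarks. First, in part (i) your testing argument does give $\phi_1\equiv\phi_2$ directly from strict monotonicity (the right-hand integrand is strictly negative wherever $\phi_1\neq\phi_2$), which is slightly cleaner than the paper's two-step ``unique up to a constant, then $C=0$'' argument; both are fine. Second, your coercivity remark for the Neumann functional is morally right but glosses over the mechanism: the zero-mean hypothesis on $\rho_0$ kills the linear term's dependence on the average of $\phi$ (and controls the remainder by Poincar\'e), while the superlinear growth of $-F_\Lambda$ controls the average; spelling this out would be needed in a complete write-up, though the paper itself defers it to ``similar to the Robin case.''
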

\begin{rmk}\label{rmk1}
As $\Lambda=0$, equations \eqref{eq:1.10} and \eqref{eq:1.11} imply $c_{0,0}=e^{\hat{\mu}_0}$ and $c_{i,0}=\exp(\hat{\mu}_i-z_i\phi)$ for $i=1,\dots,N$.
Here we replace $\phi_0$ by $\phi$ so the PB-steric equation \eqref{eq:1.12} becomes the conventional Poisson--Boltzmann (PB) equation $-\nabla\cdot(\epsilon\nabla\phi)=\rho_0+\sum_{i=1}^Nz_i\exp(\hat{\mu}_i-z_i\phi)$ in $\Omega$. However, as $\Lambda>0$, a larger $\Lambda$ makes the steric repulsion (between ions and solvent molecules) stronger, and eventually (as $\Lambda\to\infty)$ the limiting equation of PB-steric equation \eqref{eq:1.12} becomes equation \eqref{eq:1.17} which has the same form (up to scalar multiples) as the modified PB (mPB) equations in \cite{1942bikerman,1997borukhov,2007kilic,2009li,2009li2,2013li,2011lu} (see below).
Hence the PB-steric equation \eqref{eq:1.12} is a new PB equation (with the steric effects of ions and solvent molecules) which can be regarded as a generalized model of mPB equations.
\end{rmk}

To show equation \eqref{eq:1.17} with the same form (up to scalar multiples) as the mPB equations \eqref{eq:1.18} and \eqref{eq:1.21}, we use \eqref{eq:1.04} and (A1) with $\Lambda=S_\sigma$ to obtain $\lambda_i\lambda_j=\epsilon_{ij}(a_i+a_j)^{12}$ for $i,j=0,1,\dots,N$ which implies $\lambda_i=(\epsilon_{ii}(2a_i)^{12})^{1/2}$ and $\epsilon_{ij}=(\epsilon_{ii}\epsilon_{jj})^{1/2}\frac{2^{12}(a_ia_j)^6}{(a_i+a_j)^{12}}$ for $i,j=0,1,\dots,N$.
Note that because of $S_\sigma\sim\sigma^{d-12}$ and $1\leq d\leq3$, $S_\sigma\to\infty$ iff $\sigma\to0^+$, where $\sigma>0$ is the parameter of the approximate Lennard--Jones potential (cf. \cite{2014lin}).
Hence the nonlinear term of equation \eqref{eq:1.17} is function $f^*$ which depends on $a_i$'s the radii of ions and solvent molecules.
Suppose that $\lambda_i=a_i^3$, i.e., $\epsilon_{ii}=(2^{12}a_i^6)^{-1}$ for $i=0,1,\dots,N$.
If ions and solvent molecules have the same size, i.e., $a_i=a_0$ for all $i$, then equations \eqref{eq:1.14} and \eqref{eq:1.15} imply
\[c_i^*(\phi)=\frac{\tilde{\mu}_0}{a_0^3}\frac{\exp(\bar{\mu}_i-z_i\phi)}{\sum_{i=0}^N\exp(\bar{\mu}_i-z_i\phi)}\quad\text{for}~\phi\in\mathbb R~\text{and}~i=0,1,\dots,N,\]which can be plugged in equations \eqref{eq:1.16} and \eqref{eq:1.17} (with $\phi=\phi^*$) to get the following mPB equation (same size of ions and solvent molecules)
\begin{equation}
\label{eq:1.18}
-\nabla\cdot(\epsilon\nabla\phi)=\rho_0+f_{mPB}(\phi)\quad\text{in}~\Omega,
\end{equation}
where
\[f_{mPB}(\phi)=\frac{\tilde{\mu}_0}{a_0^3}\frac{\sum_{i=1}^Nz_i\exp(\bar{\mu}_i-z_i\phi)}{\sum_{i=0}^N\exp(\bar{\mu}_i-z_i\phi)}\quad\text{for}~\phi\in\mathbb R,\]see \cite{1942bikerman,1997borukhov,2007kilic}.
Conversely, if ions and solvent molecules do not have the same size, i.e., there exists $a_i\neq a_0$ for some $i\in\{1,\dots,N\}$, then equations \eqref{eq:1.14}--\eqref{eq:1.17} can be transformed to the following mPB equations
\begin{align}
\label{eq:1.19}
&\sum_{i=0}^Nv_ic_i=1,\\
\label{eq:1.20}
&\frac{v_i}{v_0}\ln(v_0c_0)-\ln(v_ic_i)=\beta q_i\psi-\beta\mu_i,\\
\label{eq:1.21}
&-\nabla\cdot(\epsilon\nabla\phi)=\rho_0+\sum_{i=0}^N\beta q_iv_ic_i(\psi)~~\text{in}~\Omega,
\end{align}
see \cite{2009li,2009li2,2013li,2011lu}.
Here we set $\lambda_i=a_i^3=v_i\tilde{\mu}_0$, $c_i^*=c_i$, $\bar{\mu}_i=\beta\mu_i+\frac{v_i}{v_0}\ln v_0-\ln v_i$, $z_i=\beta q_i$ and $\phi^*=\phi=\psi$.
Moreover, equations \eqref{eq:1.14}--\eqref{eq:1.17} have the same form (up to scalar multiples) as equations \eqref{eq:1.19}--\eqref{eq:1.21} (see Appendix II).
Therefore, as for Remark~\ref{rmk1}, the PB-steric equation (equation \eqref{eq:1.12} with the nonlinear term given by equations \eqref{eq:1.10}, \eqref{eq:1.11} and \eqref{eq:1.13}) can be regarded as a generalized model of mPB equations.

In Section~\ref{sec5}, we provide numerical simulations on equations \eqref{eq:1.12} and \eqref{eq:1.17} with the Robin and Neumann boundary conditions \eqref{eq:1.08} and \eqref{eq:1.09}, respectively.
The one-dimensional domain $\Omega=\left(-1,1\right)$ is discretized by the Legendre--Gauss--Lobatto (LGL) points (cf. \cite{2015elbaghdady}).
Then we use the LGL points to discretize equations \eqref{eq:1.12} and \eqref{eq:1.17}, and solve them numerically with the command {\ttfamily fsolve} in Matlab.
Under the Robin boundary condition \eqref{eq:1.08}, we show the profiles of solutions $\phi_\Lambda$ and $\phi^*$ to support Theorem~\ref{thm:1.1}.
Moreover, we demonstrate the profiles of solutions $\phi_\Lambda$ and $\phi^*$ under the Neumann boundary condition \eqref{eq:1.09} to support Theorem~\ref{thm:1.2}.

{\bf Organization.} The rest of the paper is organized as follows.
In Section~\ref{sec2}, the analysis of functions $f_\Lambda$ and $f^*$ is presented.
We prove Theorems~\ref{thm:1.1} and \ref{thm:1.2} in Sections~\ref{sec3} and \ref{sec4}, respectively.
Numerical results are shown in Section~\ref{sec5}.

\section{Analysis of \texorpdfstring{$f_\Lambda$}{fΛ} and \texorpdfstring{$f^*$}{f*}}
\label{sec2}

In this section, we show how to solve equation \eqref{eq:1.06} by (A1) and (A2) and obtain smooth solutions $c_{i,\Lambda}=c_{i,\Lambda}(\phi)$ for $\phi\in\mathbb R$ and $i=0,1,\dots,N$.
Note that because of (A1), equation \eqref{eq:1.06} can be expressed as \begin{equation}
\label{eq:2.01}
\ln c_{i,\Lambda}+z_i\phi+\Lambda\sum_{j=0}^N\lambda_i\lambda_jc_{j,\Lambda}=\mu_j\quad\text{for}~\phi\in\mathbb R~\text{and}~i=0,1,\dots,N.
\end{equation}
By (A2) and Gaussian elimination, we solve equation \eqref{eq:2.01} and get \eqref{eq:1.10}.
Then we plug \eqref{eq:1.10} into \eqref{eq:2.01} with $i=0$ and obtain equation \eqref{eq:1.11}.
Moreover, we apply the implicit function theorem to prove that equation \eqref{eq:1.11} has a smooth solution $c_{0,\Lambda}=c_{0,\Lambda}(\phi)$ for $\phi\in\mathbb R$ (see Proposition~\ref{prop:2.1}).
As $\Lambda$ goes to infinity, we use the implicit function theorem on Banach space to show that $c_{i,\Lambda}$ converges to $c_i^*$ in space $\mathcal{C}^m[a,b]$ for any $m\in\mathbb N$ and $a<b$ (see Proposition~\ref{prop:2.6}).
Hence $f_\Lambda=\sum_{i=1}^Nz_ic_{i,\Lambda}(\phi)$ also converges to $f^*=\sum_{i=1}^Nz_ic_i^*(\phi)$ in space $\mathcal{C}^m[a,b]$.

\subsection{Analysis of \texorpdfstring{$c_{i,\Lambda}$}{ciΛ} and \texorpdfstring{$f_\Lambda$}{fΛ}}
\label{sec2.1}

For functions $c_{i,\Lambda}$ and $f_\Lambda$, we have the following propositions.

\begin{prop}
\label{prop:2.1}
Assume that $z_0=0$, (A1) and (A2) hold true.
Then equation \eqref{eq:2.01} has smooth and positive solutions $c_{i,\Lambda}=c_{i,\Lambda}(\phi)$ for $\phi\in\mathbb R$ and $i=0,1,\cdots,N$.
\end{prop}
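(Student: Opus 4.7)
The plan is to decouple the system \eqref{eq:2.01} into the explicit relations \eqref{eq:1.10} plus one scalar equation \eqref{eq:1.11} for the solvent concentration $c_{0,\Lambda}$, and then to produce a global smooth positive $c_{0,\Lambda}(\phi)$ via a monotonicity-plus-implicit-function-theorem argument.

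First I would derive \eqref{eq:1.10}. Writing $S=\sum_{j=0}^{N}\lambda_{j}c_{j,\Lambda}$, equation \eqref{eq:2.01} gives $\ln c_{i,\Lambda}=\mu_{i}-z_{i}\phi-\Lambda\lambda_{i}S$; substituting (A2) yields $\ln c_{i,\Lambda}=\hat{\mu}_{i}-z_{i}\phi+\Lambda\lambda_{i}(\tilde{\mu}_{0}-S)$. Specializing to $i=0$ and using $z_{0}=0$ solves for the only term still involving the unknown $S$, namely $\Lambda\lambda_{0}(\tilde{\mu}_{0}-S)=\ln c_{0,\Lambda}-\hat{\mu}_{0}$, whence $\Lambda\lambda_{i}(\tilde{\mu}_{0}-S)=(\lambda_{i}/\lambda_{0})(\ln c_{0,\Lambda}-\hat{\mu}_{0})$. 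Substituting back gives \eqref{eq:1.10}, and feeding \eqref{eq:1.10} into the $i=0$ case of \eqref{eq:2.01} produces the scalar equation \eqref{eq:1.11}. Smoothness and positivity of $c_{i,\Lambda}$ for $i\geq 1$ will then follow automatically from \eqref{eq:1.10} once $c_{0,\Lambda}$ is in hand, since $c_{0,\Lambda}$ enters only through the smooth positive operations $t\mapsto t^{\lambda_{i}/\lambda_{0}}$ and the exponential.

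Second, to solve \eqref{eq:1.11}, I would study the auxiliary map
\[
F(\phi,t)\;=\;\ln t+\Lambda\lambda_{0}\sum_{j=0}^{N}\lambda_{j}\,t^{\lambda_{j}/\lambda_{0}}\exp(\bar{\mu}_{j}-z_{j}\phi)-\mu_{0}
\]
on $\mathbb{R}\times(0,\infty)$. Since every $\lambda_{j}>0$, direct differentiation yields
\[
\frac{\partial F}{\partial t}(\phi,t)\;=\;\frac{1}{t}+\Lambda\sum_{j=0}^{N}\lambda_{j}^{2}\,t^{\lambda_{j}/\lambda_{0}-1}\exp(\bar{\mu}_{j}-z_{j}\phi)\;>\;0,
\]
so $F(\phi,\cdot)$ is strictly increasing on $(0,\infty)$. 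Combined with the limits $F(\phi,t)\to-\infty$ as $t\to 0^{+}$ and $F(\phi,t)\to+\infty$ as $t\to+\infty$, the intermediate value theorem produces a unique $c_{0,\Lambda}(\phi)>0$ with $F(\phi,c_{0,\Lambda}(\phi))=0$. Because $F$ is $\mathcal{C}^{\infty}$ and $\partial F/\partial t$ is strictly positive everywhere, the classical implicit function theorem promotes $c_{0,\Lambda}$ to a $\mathcal{C}^{\infty}$ function on a neighborhood of every $\phi\in\mathbb{R}$, and uniqueness glues these local branches into a single positive $\mathcal{C}^{\infty}$ function on all of $\mathbb{R}$.

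The main (mild) obstacle is the globalness in $\phi$: unlike the usual finite-interval application of the implicit function theorem, one needs existence and smoothness for every real value of $\phi$ and all $\Lambda>0$. This is handled uniformly by the two global properties of $F(\phi,\cdot)$ identified above---strict monotonicity and divergence at the endpoints of $(0,\infty)$---both of which hold for every $\phi\in\mathbb{R}$ and every $\Lambda>0$ thanks solely to the positivity of $\lambda_{0},\ldots,\lambda_{N}$; no sign hypothesis on the valences $z_{i}$ is required at this stage.
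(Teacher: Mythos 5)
Your proposal is correct and follows essentially the same route as the paper: decouple the system by expressing $c_{i,\Lambda}$ in terms of $c_{0,\Lambda}$ via \eqref{eq:1.10}, reduce to the scalar equation \eqref{eq:1.11}, and then use strict monotonicity of $t\mapsto h_1(t,\phi)$ (your $F(\phi,t)$) together with its limits $-\infty$ and $+\infty$ at the endpoints of $(0,\infty)$ to get a unique positive root, promoted to a global $\mathcal{C}^\infty$ function by the implicit function theorem. The only cosmetic difference is in how the decoupling \eqref{eq:1.10} is obtained — you work directly with $S=\sum_j\lambda_j c_{j,\Lambda}$ and (A2), whereas the paper multiplies the $i=0$ equation by $\lambda_i/\lambda_0$ and subtracts — but the two are the same elimination in different dress.
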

\begin{proof}
Multiplying the equation \eqref{eq:2.01} for $i=0$ by $\lambda_i/\lambda_0$, we obtain
\begin{equation}
\label{eq:2.02}
\frac{\lambda_i}{\lambda_0}\ln c_{0,\Lambda}+\Lambda\sum_{j=0}^N\lambda_i\lambda_jc_{j,\Lambda}=\frac{\lambda_i}{\lambda_0}\mu_0.
\end{equation}
Then we subtract the equation \eqref{eq:2.01} from the equation \eqref{eq:2.02} to get
\[\ln c_{i,\Lambda}-\frac{\lambda_i}{\lambda_0}\ln c_{0,\Lambda}+z_i\phi=\mu_i-\frac{\lambda_i}{\lambda_0}\mu_0,
\]
which implies
\begin{equation}
\label{eq:2.03}
c_{i,\Lambda}=c_{0,\Lambda}^{\lambda_i/\lambda_0}\exp\left(\bar{\mu}_i-z_i\phi\right)\quad\text{for}~\phi\in\mathbb R~\text{and}~i=1,\dots,N,
\end{equation}
where $\bar{\mu_i}=\mu_i-\frac{\lambda_i}{\lambda_0}\mu_0=\hat{\mu}_i-\frac{\lambda_i}{\lambda_0}\hat{\mu}_0$ because of (A2).
Note that $\bar{\mu}_i$ is independent of $\Lambda$.
Plugging \eqref{eq:2.03} into \eqref{eq:2.02}, we have 
\begin{equation}
\label{eq:2.04}
\ln c_{0,\Lambda}+\Lambda\lambda_0\sum_{j=0}^N\,\lambda_j\,c_{0,\Lambda}^{\lambda_j/\lambda_0}\,\exp(\bar{\mu}_j-z_j\phi)-\mu_0=0\quad\text{for}~\phi\in\mathbb R,
\end{equation}
which can be denoted as $
h_1(c_{0,\Lambda},\phi)=0$.
Here $h_1$ is defined by
\[h_1\left(t,\phi\right)=\ln t+\Lambda\lambda_0\sum_{j=0}^N\lambda_jt^{\lambda_j/\lambda_0}\exp\left(\bar{\mu}_j-z_j\phi\right)-\mu_0\quad\text{for}~t>0~\text{and}~\phi\in\mathbb R.
\]
Notice that, for any $\phi\in\mathbb{R}$, $h_1$ is strictly increasing for $t>0$ and the range of $h_1$ is entire space $\mathbb{R}$. Then there exists a unique positive number $c_{0,\Lambda}(\phi)$ such that $h_1\left(c_{0,\Lambda}(\phi),\phi\right)=0$ for $\phi\in\mathbb R$.
Moreover, since $h_1$ is smooth for $t>0$, $\phi\in\mathbb R$, and
\[
\frac{\partial h_1}{\partial t}\left(t,\phi\right)=\frac1t+\Lambda\sum_{j=0}^N\lambda_j^2t^{(\lambda_j-\lambda_0)/\lambda_0}\exp\left(\bar{\mu}_j-z_j\phi\right)>0\quad\text{for}~t>0~\text{and}~\phi\in\mathbb R,
\]
then by the implicit function theorem (cf. \cite[Theorem 3.3.1]{2002krantz}), $c_{0,\Lambda}=c_{0,\Lambda}(\phi)$ is a smooth and positive function on $\mathbb R$.
Therefore, by \eqref{eq:2.03}, each $c_{i,\Lambda}$ is smooth and positive on $\mathbb R$ and we complete the proof of Proposition~\ref{prop:2.1}.
\end{proof}

\begin{prop}
\label{prop:2.2}
Function $\displaystyle f_\Lambda(\phi)=\sum_{i=0}^Nz_ic_{i,\Lambda}(\phi)$ is strictly decreasing on $\mathbb R$, where functions $c_{i,\Lambda}$ are obtained in Proposition~\ref{prop:2.1}.
\end{prop}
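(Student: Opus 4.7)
\medskip
\noindent\textbf{Proof proposal for Proposition~\ref{prop:2.2}.}
The plan is to differentiate the defining system \eqref{eq:2.01} in $\phi$, solve the resulting linear relations for the derivatives $c_{i,\Lambda}'(\phi)$, and then exhibit $f_\Lambda'(\phi)$ as a quantity that is strictly negative thanks to the Cauchy--Schwarz inequality. Since Proposition~\ref{prop:2.1} guarantees that each $c_{i,\Lambda}(\phi)$ is smooth and positive, differentiation is legitimate.

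First, differentiating \eqref{eq:2.01} with respect to $\phi$ gives
\[
\frac{c_{i,\Lambda}'(\phi)}{c_{i,\Lambda}(\phi)} + z_i + \Lambda\lambda_i\, S(\phi) = 0, \qquad i=0,1,\dots,N,
\]
where $S(\phi):=\sum_{j=0}^N \lambda_j c_{j,\Lambda}'(\phi)$. Hence $c_{i,\Lambda}' = -c_{i,\Lambda}(z_i + \Lambda\lambda_i S)$. Multiplying by $\lambda_i$ and summing over $i$ produces a single scalar equation for $S$, which I will solve to get
\[
S(\phi) = -\frac{B(\phi)}{1+\Lambda A(\phi)}, \qquad A:=\sum_{i=0}^N \lambda_i^2 c_{i,\Lambda}, \qquad B:=\sum_{i=0}^N \lambda_i z_i c_{i,\Lambda}.
\]
Note $A>0$ so $1+\Lambda A>0$ and the expression is well defined.

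Next, multiplying the formula for $c_{i,\Lambda}'$ by $z_i$ and summing gives
\[
f_\Lambda'(\phi) = -C(\phi) - \Lambda B(\phi)\,S(\phi) = -C + \frac{\Lambda B^2}{1+\Lambda A} = \frac{-C-\Lambda(AC-B^2)}{1+\Lambda A},
\]
where $C:=\sum_{i=0}^N z_i^2 c_{i,\Lambda}>0$ (positivity of $C$ uses that $z_i\neq 0$ for $i\ge 1$, which is built into the PNP setting at the start of the paper). The Cauchy--Schwarz inequality applied to the vectors $(\lambda_i\sqrt{c_{i,\Lambda}})$ and $(z_i\sqrt{c_{i,\Lambda}})$ yields $B^2\le AC$, so $AC-B^2\ge 0$, whence
\[
f_\Lambda'(\phi) \le \frac{-C}{1+\Lambda A} < 0,
\]
which proves strict monotonicity.

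The only potential obstacle is ensuring that the bound is genuinely strict and not just $f_\Lambda'\le 0$; this is handled by the observation that $C>0$, which already forces the numerator $-C-\Lambda(AC-B^2)\le -C<0$ regardless of whether the Cauchy--Schwarz inequality is strict. (In fact, since $z_0=0$ while $\lambda_0>0$, the two vectors above cannot be proportional unless all $z_i$ vanish, so $AC-B^2>0$ as well, but this refinement is not needed for the conclusion.) No further machinery beyond differentiating \eqref{eq:2.01} and an elementary Cauchy--Schwarz estimate is required.
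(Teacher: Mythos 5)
Your proof is correct, and it takes a different (more explicit) route than the paper. The paper differentiates \eqref{eq:2.01} to obtain $(D_\Lambda+G)\,\mathrm{d}{\bf c}_\Lambda/\mathrm{d}\phi=-{\bf z}$ with $D_\Lambda=\mathrm{diag}(c_{i,\Lambda}^{-1})$ and the rank-one matrix $G=\Lambda\boldsymbol\lambda\boldsymbol\lambda^{\mathsf T}$, then argues abstractly that $D_\Lambda+G$ is positive definite, hence its inverse is positive definite, hence $f_\Lambda'=-{\bf z}^{\mathsf T}(D_\Lambda+G)^{-1}{\bf z}<0$ because ${\bf z}\neq{\bf 0}$. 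You instead exploit the rank-one structure by hand: eliminating the scalar $S=\sum_j\lambda_jc_{j,\Lambda}'$ is exactly a Sherman--Morrison computation and gives the closed form
\[
f_\Lambda'(\phi)=-C+\frac{\Lambda B^2}{1+\Lambda A},\qquad A=\sum\lambda_i^2c_{i,\Lambda},\;\;B=\sum\lambda_iz_ic_{i,\Lambda},\;\;C=\sum z_i^2c_{i,\Lambda},
\]
after which Cauchy--Schwarz ($B^2\le AC$) and $C>0$ close the argument. Both proofs rest on the same linear system; the trade-off is that the paper's argument is shorter and matrix-free in spirit, while yours makes the dependence on $\Lambda$ explicit. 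A nice byproduct of your computation is that the expression above is precisely the finite-$\Lambda$ version of the formula $\mathrm{d}f^*/\mathrm{d}\phi=\frac{(\sum z_i\lambda_ic_i^*)^2}{\sum\lambda_i^2c_i^*}-\sum z_i^2c_i^*$ which the paper derives independently in Proposition~\ref{prop:2.7} for the limit $\Lambda\to\infty$ (also via Cauchy--Schwarz), so your route unifies Propositions~\ref{prop:2.2} and~\ref{prop:2.7} and exhibits the strictly negative upper bound $f_\Lambda'\le -C/(1+\Lambda A)$, which the abstract positive-definiteness argument does not produce.
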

\begin{proof}
By Proposition~\ref{prop:2.1}, we differentiate the equation \eqref{eq:2.01} with respect to $\phi$ and obtain
\begin{equation}
\label{eq:2.05}
(D_\Lambda+G)\frac{\mathrm{d}{\bf c}_\Lambda}{\mathrm{d}\phi}=-{\bf z}\quad\text{for}~\phi\in\mathbb R,
\end{equation}
where matrix $D_\Lambda=\text{diag}(1/c_{0,\Lambda},\cdots,1/c_{N,\Lambda})$ is positive definite, matrix $G=\Lambda[\lambda_0~\cdots~\lambda_N]^{\mathsf{T}}[\lambda_0~\cdots~\lambda_N]$, ${\bf c}_\Lambda=[c_{0,\Lambda}~\cdots~c_{N,\Lambda}]^{\mathsf{T}}$ and ${\bf z}=[z_0~\cdots~z_N]^{\mathsf{T}}$.
It is obvious that matrix $D_\Lambda+G$ is positive definite and invertible with inverse matrix $(D_\Lambda+G)^{-1}$ which is also positive definite. Then equation \eqref{eq:2.05} gives $\mathrm{d}{\bf c}_\Lambda/\mathrm{d}\phi=-(D_\Lambda+G)^{-1}{\bf z}$, and $z^{\mathsf{T}}(\mathrm{d}{\bf c}_\Lambda/\mathrm{d}\phi)$ becomes 
\[
\frac{\mathrm{d}f_{\Lambda}}{\mathrm{d}\phi}=\sum_{i=1}^Nz_i\frac{\mathrm{d}c_{i,\Lambda}}{\mathrm{d}\phi}={\bf z}^{\mathsf{T}}\frac{\mathrm{d}{\bf c}}{\mathrm{d}\phi}=-{\bf z}^{\mathsf{T}}\left(D+G\right)^{-1}{\bf z}<0\quad\text{for}~\phi\in\mathbb R.
\]
Here we have used the fact that ${\bf z}\neq0$ and we complete the proof of Proposition~\ref{prop:2.2}.
\end{proof}

\begin{prop}
\label{prop:2.3}
Assume that (A1) and (A2) hold true.
\begin{enumerate}
\item[(i)] If $z_k\geq0$ ($z_k\leq0$) and $k\in\{0,1,\dots,N\}$, then $\displaystyle\sup_{\phi\geq0}c_{k,\Lambda}(\phi)\leq e^{\mu_k}$ ($\displaystyle\sup_{\phi\leq0}c_{k,\Lambda}(\phi)\leq  e^{\mu_k}$).
\item[(ii)] Suppose $z_iz_j<0$ for some $i,j\in\{1,\dots,N\}$.
Then there exist $i_0,j_0\in\{1,\dots,N\}$, $z_{i_0}>0$ and $z_{j_0}<0$ such that $\displaystyle\limsup_{\phi\to-\infty}c_{i_0,\Lambda}(\phi)=\infty$ and $\displaystyle\limsup_{\phi\to\infty}c_{j_0,\Lambda}(\phi)=\infty$.
\end{enumerate}
\end{prop}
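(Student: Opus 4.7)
My plan is to work directly from equation \eqref{eq:2.01}, which I will rewrite in the compact form $\ln c_{i,\Lambda}(\phi) = \mu_i - z_i\phi - \Lambda\lambda_i S(\phi)$, where $S(\phi):=\sum_{j=0}^N\lambda_j c_{j,\Lambda}(\phi)>0$ by positivity (Proposition~\ref{prop:2.1}). All subsequent bounds flow from this single identity.

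For part (i), the argument is immediate: since $\lambda_k>0$ and $S>0$, the term $\Lambda\lambda_k S$ is nonnegative, so $\ln c_{k,\Lambda}(\phi)\leq \mu_k-z_k\phi$. If $z_k\geq 0$ and $\phi\geq 0$, then $-z_k\phi\leq 0$, giving $c_{k,\Lambda}(\phi)\leq e^{\mu_k}$; the case $z_k\leq 0$, $\phi\leq 0$ is identical.

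For part (ii), I will argue by contradiction at $\phi\to-\infty$ (the case $\phi\to\infty$ being symmetric via the sign flip $z_i\mapsto-z_i$). Let $P=\{k\in\{1,\dots,N\}:z_k>0\}$ and $N'=\{k:z_k<0\}$; by hypothesis both are nonempty. Suppose, for contradiction, that for every $k\in P$ the function $c_{k,\Lambda}$ is bounded as $\phi\to-\infty$, say $c_{k,\Lambda}(\phi)\leq M_k$. Then $\ln c_{k,\Lambda}\leq \ln M_k$, which rearranges via the identity above to
\[
S(\phi)\;\geq\;\frac{-z_k\phi+\mu_k-\ln M_k}{\Lambda\lambda_k}\;\longrightarrow\;+\infty\qquad\text{as }\phi\to-\infty,
\]
for any fixed $k\in P$, since $-z_k\phi\to+\infty$.

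Now I will show that this forced blow-up of $S$ is incompatible with the boundedness of every term contributing to $S$. Fix one $k^\star\in P$ and use the lower bound $S(\phi)\geq(-z_{k^\star}\phi-C)/(\Lambda\lambda_{k^\star})$ inside the identity for species $k'$ with $z_{k'}\leq 0$:
\[
\ln c_{k',\Lambda}(\phi)\;\leq\;\mu_{k'}-z_{k'}\phi-\frac{\lambda_{k'}}{\lambda_{k^\star}}\bigl(-z_{k^\star}\phi-C\bigr)\;=\;\Bigl(-z_{k'}+\frac{\lambda_{k'}z_{k^\star}}{\lambda_{k^\star}}\Bigr)\phi+\mathrm{const}.
\]
For $k'\in N'$ the coefficient of $\phi$ is strictly positive (both $-z_{k'}$ and $\lambda_{k'}z_{k^\star}/\lambda_{k^\star}$ are positive), so $\ln c_{k',\Lambda}(\phi)\to-\infty$, hence $c_{k',\Lambda}(\phi)\to 0$. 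For $k'=0$, since $z_0=0$, we get directly $c_{0,\Lambda}(\phi)=\exp(\mu_0-\Lambda\lambda_0 S(\phi))\to 0$. Combining with the assumed boundedness on $P$, the sum $S(\phi)=\sum_{k\in P}\lambda_k c_{k,\Lambda}+\sum_{k'\in N'\cup\{0\}}\lambda_{k'}c_{k',\Lambda}$ stays bounded as $\phi\to-\infty$, contradicting $S(\phi)\to\infty$. Thus some $i_0\in P$ must have $\limsup_{\phi\to-\infty}c_{i_0,\Lambda}(\phi)=\infty$. The symmetric argument at $\phi\to+\infty$ yields the corresponding $j_0\in N'$.

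The only nontrivial step is the contradiction in part (ii); the main obstacle is recognizing that boundedness of positive-valence species can be turned into a quantitative lower bound on $S$, which then propagates to suppress the negative-valence and solvent species and so to bound $S$ from above — the closure of this loop is what forces the blow-up.
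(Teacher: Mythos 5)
Your proof is correct. Part (i) is identical to the paper's argument: drop the nonnegative term $\Lambda\lambda_k S(\phi)$ from the identity $\ln c_{k,\Lambda}=\mu_k-z_k\phi-\Lambda\lambda_kS(\phi)$ and note that $-z_k\phi\leq 0$ on the relevant half-line. In part (ii) you and the paper both run a contradiction from the same identity, but the closing step is organized differently. The paper bounds every summand in $S(\phi)=\sum_k\lambda_kc_{k,\Lambda}$ by a constant for $\phi\leq 0$ — the $z_k>0$ terms by the contradiction hypothesis, the $z_k\leq 0$ and $k=0$ terms by invoking part (i) — and then reads off the contradiction from $z_i\phi\geq\text{const}$ versus $z_i\phi\to-\infty$. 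You instead argue on $S$ itself: boundedness of the positive-valence species forces $S(\phi)\to\infty$, which in turn suppresses the negative-valence and solvent concentrations (you derive this from the lower bound on $S$ rather than citing part (i)), and then $S$ is a finite sum of bounded quantities, contradiction. Your route is a little longer but makes the mechanism transparent and avoids reusing part (i) explicitly; the paper's is a bit terser. Both are sound, and the one implicit point in both — that ``bounded as $\phi\to-\infty$'' upgrades to ``bounded on $(-\infty,0]$'' via continuity of $c_{k,\Lambda}$ — is harmless.
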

\begin{proof}
Suppose that $z_k\geq0$ for some $k\in\{0,\dots,N\}$.
Then equation \eqref{eq:2.01} implies
\[\sup_{\phi\geq0}\ln c_{k,\Lambda}(\phi)=\sup_{\phi\geq0}\left(\mu_k-z_k\phi-\sum_{j=0}^Ng_{kj}c_{j,\Lambda}(\phi)\right)\leq\mu_k<\infty,\]
and $\displaystyle\sup_{\phi\geq0}c_{k,\Lambda}(\phi)\leq e^{\mu_k}$.
Here we have used the fact that $c_{i,\Lambda}(\phi)>0$ for $\phi\in\mathbb R$ and $i=0,1,\cdots,N$.
Similarly, if $z_k\leq0$ and $k\in\{0,\cdots,N\}$, then equation \eqref{eq:2.01} gives\[\sup_{\phi\leq0}\ln c_{k,\Lambda}(\phi)=\sup_{\phi\leq0}\left(\mu_k-z_k\phi-\Lambda\lambda_k\sum_{j=0}^N\lambda_jc_{j,\Lambda}(\phi)\right)\leq\mu_k<\infty,\]and $\displaystyle\sup_{\phi\leq 0}c_{k,\Lambda}(\phi)\leq e^{\mu_k}$.
Hence the proof of Proposition~\ref{prop:2.3}(i) is complete.

It remains to prove (ii).
Since $z_iz_j<0$ for some $i,j\in\{1,\dots,N\}$, then box index sets $I=\{i:z_i>0\}$ and $J=\{j:z_j<0\}$ are nonempty.
Now we claim that there exists $i_0\in I$ such that $\displaystyle\limsup_{\phi\to-\infty}c_{i_0,\Lambda}(\phi)=\infty$.
We prove this by contradiction.
Suppose $\displaystyle\sup_{\phi\leq0}c_{i,\Lambda}(\phi)<\infty$ for all $i\in I$.
Then there exists $K_1>0$ such that $0<c_{i,\Lambda}(\phi)<K_1$ for $\phi\leq0$ and $i\in I$.
By equation \eqref{eq:2.01} and Proposition \eqref{eq:2.03}(i), we have
\[\begin{aligned}z_i\phi&=\mu_i-\ln c_{i,\Lambda}(\phi)-\Lambda\lambda_i\sum_{k=0}^N\lambda_jc_{k,\Lambda}(\phi)\\&=\mu_i-\ln c_{i,\Lambda}(\phi)-\Lambda\lambda_i\left(\sum_{k\in I}\lambda_kc_{k,\Lambda}(\phi)+\sum_{j\in\{0\}\cup J}\lambda_kc_{k,\Lambda}(\phi)\right)\\&\geq\mu_i-\ln K_1-\Lambda\lambda_i\left(\sum_{k\in I}\lambda_kK_1+\sum_{k\in\{0\}\cup J}\lambda_ke^{\mu_k}\right)\quad\text{for}~i\in I~\text{and}~\phi\leq0,\end{aligned}\]
which leads a contradiction by letting $\phi\to-\infty$.
Hence there exists $i_0\in I$ such that $\displaystyle\limsup_{\phi\to-\infty}c_{i_0,\Lambda}(\phi)=\infty$.
Similarly, we claim that there exists $j_0\in J$ such that $\displaystyle\limsup_{\phi\to\infty}c_{j_0,\Lambda}(\phi)=\infty$.
We also prove this by contradiction.
Suppose $\displaystyle\sup_{\phi\geq0}c_{j,\Lambda}(\phi)<\infty$ for all $j\in J$.
Then there exists $K_2>0$ such that $0<c_{j,\Lambda}(\phi)\leq K_2$ for $\phi\geq0$ and $j\in J$.
By equation \eqref{eq:2.01} and Proposition~\ref{prop:2.3}(i), we have
\[\begin{aligned}z_j\phi&=\mu_j-\ln c_{j,\Lambda}(\phi)-\Lambda\lambda_j\sum_{k=0}^N\lambda_kc_{k,\Lambda}(\phi)\\&=\mu_j-\ln c_{j,\Lambda}(\phi)-\Lambda\lambda_j\left(\sum_{k\in\{0\}\cup I}\lambda_kc_{k,\Lambda}(\phi)+\sum_{k\in J}\lambda_kc_{k,\Lambda}(\phi)\right)\\&\geq\mu_j-\ln K_2-\Lambda\lambda_j\left(\sum_{k\in\{0\}\cup I}\lambda_ke^{\mu_k}+\sum_{k\in J}\lambda_kK_2\right)\quad\text{for}~k\in J~\text{and}~\phi\geq0,\end{aligned}\]
which leads a contradiction by letting $\phi\to\infty$. Therefore, the proof of Proposition~\ref{prop:2.3}(ii) is complete.\end{proof}

\begin{prop}
\label{prop:2.4}
For each $\Lambda>0$, the range of $f_{\Lambda}$ is entire space $\mathbb{R}$, and $\displaystyle\lim_{\phi\to\pm\infty}f_\Lambda\left(\phi\right)=\mp\infty$.
\end{prop}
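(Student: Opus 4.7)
The plan is to combine the strict monotonicity of $f_\Lambda$ established in Proposition~\ref{prop:2.2} with the one-sided boundedness from Proposition~\ref{prop:2.3}(i) and the one-sided blow-up from Proposition~\ref{prop:2.3}(ii). Since $f_\Lambda$ is smooth (hence continuous) and strictly decreasing on $\mathbb R$, the limits $L_{\pm}:=\lim_{\phi\to\pm\infty}f_\Lambda(\phi)$ automatically exist in $[-\infty,\infty]$. Once we show $L_-=+\infty$ and $L_+=-\infty$, the intermediate value theorem immediately yields that the range is all of $\mathbb R$, so the whole proposition reduces to computing these two limits.

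For $L_-$, split the sum according to the sign of the valences: let $I=\{i\in\{1,\dots,N\}:z_i>0\}$ and $J=\{j\in\{1,\dots,N\}:z_j<0\}$, both nonempty by the sign-change hypothesis (implicit via Proposition~\ref{prop:2.3}(ii)). For $\phi\le 0$ and every $j\in J$, Proposition~\ref{prop:2.3}(i) gives $0<c_{j,\Lambda}(\phi)\le e^{\mu_j}$; multiplying by the negative $z_j$ reverses the inequality and produces the uniform lower bound $z_jc_{j,\Lambda}(\phi)\ge z_je^{\mu_j}$. Since every term $z_ic_{i,\Lambda}(\phi)$ with $i\in I$ is strictly positive, discarding all of them except the $i_0$ provided by Proposition~\ref{prop:2.3}(ii) yields
\[
f_\Lambda(\phi)\;\ge\;z_{i_0}c_{i_0,\Lambda}(\phi)+\sum_{j\in J}z_je^{\mu_j}\qquad\text{for all }\phi\le 0.
\]
Proposition~\ref{prop:2.3}(ii) supplies a sequence $\phi_n\to-\infty$ along which $c_{i_0,\Lambda}(\phi_n)\to\infty$; since $z_{i_0}>0$, this forces $f_\Lambda(\phi_n)\to+\infty$, and monotonicity upgrades this to $L_-=+\infty$.

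The argument for $L_+$ is entirely symmetric: for $\phi\ge 0$ use Proposition~\ref{prop:2.3}(i) on each $i\in I$ to bound the positive part of $f_\Lambda$ from above by $\sum_{i\in I}z_ie^{\mu_i}$, drop all but the $j_0$-term from the negative part, and apply Proposition~\ref{prop:2.3}(ii) to get a sequence $\phi_n\to+\infty$ along which $z_{j_0}c_{j_0,\Lambda}(\phi_n)\to-\infty$. Thus $L_+=-\infty$, and combined with the previous step and continuity, $f_\Lambda$ maps $\mathbb R$ onto $\mathbb R$.

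The only mildly delicate point is that Proposition~\ref{prop:2.3}(ii) yields only a $\limsup$ statement rather than a true limit for $c_{i_0,\Lambda}$ or $c_{j_0,\Lambda}$; however we never need a full limit for the individual concentrations. The monotone existence of $L_{\pm}$ for $f_\Lambda$ itself allows us to conclude $L_-=+\infty$ (resp.\ $L_+=-\infty$) from divergence along a \emph{single} sequence, which is exactly what Proposition~\ref{prop:2.3}(ii) provides. No additional estimates on the remaining $c_{k,\Lambda}$'s are needed because the sign structure of the split guarantees that the ``other'' terms contribute with the correct sign or are uniformly bounded.
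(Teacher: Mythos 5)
Your proof is correct and follows essentially the same route as the paper: the same split of $f_\Lambda$ by sign of $z_i$, the same use of the one-sided bounds from Proposition~\ref{prop:2.3}(i) together with the blow-up from Proposition~\ref{prop:2.3}(ii) along a subsequence, and the same appeal to the monotonicity from Proposition~\ref{prop:2.2} to upgrade subsequential divergence to a full limit. The only cosmetic difference is that you treat $\phi\to-\infty$ first and you make the final intermediate-value step explicit; the paper leaves the range statement implicit.
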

\begin{proof}By \eqref{eq:1.13}, function $f_\Lambda$ can be denoted as
\begin{equation}
\label{eq:2.06}f_{\Lambda}(\phi)=\sum_{z_i>0}z_ic_{i,\Lambda}(\phi)+\sum_{z_j<0}z_jc_{j,\Lambda}(\phi)\quad\text{for}~\phi\in\mathbb R.\end{equation}
Proposition~\ref{prop:2.3}(i) gives
\begin{equation}
\label{eq:2.07}\sum_{z_i>0}z_ic_{i,\Lambda}(\phi)<\sum_{z_i>0}z_ie^{\mu_i}\quad\text{for}~\phi\geq0.\end{equation}
Moreover, by Proposition~\ref{prop:2.3}(ii), there exists $j_0\in\{1,\dots,N\}$ with $z_{j_0}<0$ and a sequence $\{\phi_n\}_{n=1}^{\infty}$ with $\displaystyle\lim_{n\to\infty}\phi_n=\infty$ such that $\displaystyle\lim_{n\to\infty}z_{j_0}c_{j_0,\Lambda}(\phi_n)=-\infty$.
Thus, by \eqref{eq:2.06} and \eqref{eq:2.07}, we have\[f_{\Lambda}(\phi_n)\leq\sum_{z_i>0}z_ie^{\mu_i}+z_{j_0}c_{j_0,\Lambda}(\phi_n)\to-\infty\quad\text{as}~n\to\infty,\]which implies that $\displaystyle\lim_{\phi\to\infty}f_\Lambda(\phi)=-\infty$ because of the monotone decreasing of $f_\Lambda$ (see Proposition~\ref{prop:2.2}).
On the other hand, Proposition~\ref{prop:2.3}(i), also implies
\begin{equation}
\label{eq:2.08}
\sum_{z_i<0}z_ic_{i,\Lambda}(\phi)\geq\sum_{z_i<0}z_ie^{\mu_i}\quad\text{for}~\phi\leq0.
\end{equation}
Moreover, by Proposition~\ref{prop:2.3}(ii), there exist $i_0\in\{1,\dots,N\}$ with $z_{i_0}>0$ and a sequence $\{\tilde{\phi}_n\}_{n=1}^{\infty}$ with $\displaystyle\lim_{n\to\infty}\tilde{\phi}_n=-\infty$ such that $\displaystyle\lim_{n\to\infty}z_{i_0}c_{i_0,\Lambda}(\tilde{\phi}_n)=\infty$.
Thus, by \eqref{eq:2.06} and \eqref{eq:2.08}, we obtain
\[f_\Lambda(\tilde{\phi}_n)\geq z_{i_0}c_{i_0,\Lambda}(\tilde{\phi}_n)+\sum_{z_i<0}z_ie^{\mu_i}\to\infty\quad\text{as}~n\to\infty,\]
which implies that $\displaystyle\lim_{\phi\to-\infty}f_\Lambda(\phi)=\infty$ due to the monotone decreasing of $f_\Lambda$.
Therefore, the proof of Proposition~\ref{prop:2.4} is complete.
\end{proof}

\subsection{Analysis of \texorpdfstring{$c_i^*$}{ci*} and \texorpdfstring{$f^*$}{f*}}
\label{sec2.2}

Function $c_0^*$ is the limit $\displaystyle\lim_{\Lambda\to\infty}\,c_{0,\Lambda}$ (see Proposition~\ref{prop:2.6}), where function $c_{0,\Lambda}$ is the solution of equation \eqref{eq:2.04} for $\Lambda>0$.
Let $\delta=\Lambda^{-1}$ and $\tilde{c}_{0,\delta}=c_{0,\Lambda}$.
Then by (A2) and equation \eqref{eq:2.04}, function $\tilde{c}_{0,\delta}$ satisfies
\begin{equation}
\label{eq:2.09}
\delta\ln\tilde{c}_{0,\delta}(\phi)+\lambda_0\sum_{i=0}^N\,\lambda_i\,(\tilde{c}_{0,\delta}(\phi))^{\lambda_i/\lambda_0}\,\exp(\bar{\mu}_i-z_i\phi)=\lambda_0\tilde{\mu}_0+\delta\hat{\mu_0}\quad\text{for}~\phi\in\mathbb R.
\end{equation}
Notice that $\Lambda\to\infty$ is equivalent to $\delta\to0^+$ so $c_0^*$ is also equal to the limit $\displaystyle\lim_{\delta\to0^+}\,\tilde{c}_{0,\delta}$.
Moreover, $c_0^*$ satisfies equation \eqref{eq:1.15} which is equation \eqref{eq:2.09} with $\delta=0$.
Besides, by \eqref{eq:2.03}, functions $c_i^*$ satisfy the equation \eqref{eq:1.14} for $i=1,\dots,N$.
The existence and uniqueness of equations \eqref{eq:1.14} and \eqref{eq:1.15} is proved in Proposition~\ref{prop:2.5}.
The convergence of $\tilde{c}_{0,\delta}$ as $\delta\to0^+$, i.e. the convergence of $c_{0,\Lambda}$ as $\Lambda\to\infty$ is proved in Proposition~\ref{prop:2.6} so by \eqref{eq:2.03}, we obtain the convergence of $c_{i,\Lambda}$ as $\Lambda\to\infty$.

Now we state Propositions~\ref{prop:2.5} and \ref{prop:2.6} as follows.    
\begin{prop}
\label{prop:2.5}
Equations \eqref{eq:1.14} and \eqref{eq:1.15} have a unique solution $(c_0^*,\dots,c_N^*)$ and each function $c_i^*=c_i^*(\phi)$ is a smooth and positive function for $\phi\in\mathbb R$ and $i=0,1,\dots,N$.
\end{prop}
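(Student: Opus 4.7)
The plan is to reduce the system \eqref{eq:1.14}--\eqref{eq:1.15} to a single scalar equation for $c_0^*$ and then apply the implicit function theorem, exactly as in the proof of Proposition~\ref{prop:2.1}; the only difference is that the $\ln c_{0,\Lambda}$ term appearing in \eqref{eq:2.04} is absent here (formally, this is the $\delta=0$ version of \eqref{eq:2.09}). Substituting \eqref{eq:1.14} into \eqref{eq:1.15} turns the problem into $H(c_0^*(\phi),\phi)=0$, where
\[
H(t,\phi)=\sum_{i=0}^N\lambda_i\, t^{\lambda_i/\lambda_0}\exp(\bar{\mu}_i-z_i\phi)-\tilde{\mu}_0,\quad t>0,~\phi\in\mathbb R.
\]
Since $z_0=0$ and $\bar{\mu}_0=\hat{\mu}_0-\tfrac{\lambda_0}{\lambda_0}\hat{\mu}_0=0$, the $i=0$ contribution is just the linear term $\lambda_0 t$, a fact I will use crucially.

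Next, for each fixed $\phi\in\mathbb R$, I would establish existence and uniqueness of a positive $c_0^*(\phi)$ with $H(c_0^*(\phi),\phi)=0$. Every summand of $H(\cdot,\phi)$ is strictly increasing on $(0,\infty)$ because $\lambda_i/\lambda_0>0$ for all $i$, so $H(\cdot,\phi)$ itself is strictly increasing. At the endpoints, $H(t,\phi)\to-\tilde{\mu}_0<0$ as $t\to0^+$, while the linear term $\lambda_0 t$ forces $H(t,\phi)\to+\infty$ as $t\to\infty$. The intermediate value theorem then provides the unique positive root $c_0^*(\phi)$. Because \eqref{eq:1.14} determines $c_1^*,\dots,c_N^*$ uniquely from $c_0^*$, this gives existence and uniqueness of $(c_0^*,\dots,c_N^*)$.

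For smoothness and positivity, I would appeal to the implicit function theorem (cf.\ \cite[Theorem 3.3.1]{2002krantz}) applied to $H\in\mathcal{C}^\infty((0,\infty)\times\mathbb R)$. The required nondegeneracy is
\[
\frac{\partial H}{\partial t}(t,\phi)=\frac{1}{\lambda_0}\sum_{i=0}^N\lambda_i^2\, t^{\lambda_i/\lambda_0-1}\exp(\bar{\mu}_i-z_i\phi)>0,
\]
valid for every $(t,\phi)\in(0,\infty)\times\mathbb R$. This yields $c_0^*\in\mathcal{C}^\infty(\mathbb R)$ with $c_0^*>0$, and then \eqref{eq:1.14} delivers each $c_i^*$ as a smooth and positive function of $\phi$.

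I do not foresee a genuine obstacle. The one point that deserves careful verification is the identity $\bar{\mu}_0=0$, which is what allows the $i=0$ summand to act as an unbounded linear driver and guarantees $H(t,\phi)\to+\infty$ as $t\to\infty$ irrespective of the sign of $\phi$ or the sizes of the other exponents $\lambda_i/\lambda_0$. Once this is noted, the remainder is a direct structural copy of the argument in Proposition~\ref{prop:2.1}.
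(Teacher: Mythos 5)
Your proof is correct and follows essentially the same route as the paper's: reduce \eqref{eq:1.14}--\eqref{eq:1.15} to the scalar equation $h_2(c_0^*(\phi),\phi)=0$, note strict monotonicity in $t$ and the endpoint limits, and invoke the implicit function theorem. One small remark: the observation that $\bar{\mu}_0=0$ makes the $i=0$ term linear is correct but not actually needed for $H(t,\phi)\to+\infty$, since every summand $\lambda_i t^{\lambda_i/\lambda_0}\exp(\bar{\mu}_i-z_i\phi)$ already diverges as $t\to\infty$ because all $\lambda_i>0$.
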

\begin{proof}
Equations \eqref{eq:1.14} and \eqref{eq:1.15} can be solved by the following problem.
\[\sum_{j=0}^N\,\lambda_j(c_0^*(\phi))^{\lambda_j/\lambda_0}\,\exp(\bar{\mu}_j-z_j\phi)-\tilde{\mu}_0=0\quad\text{for}~\phi\in\mathbb R,\]which can be represented by $h_2(c_0^*(\phi),\phi)=0$ for $\phi\in\mathbb R$.
Here $h_2$ is defined by
\[h_2(t,\phi)=\sum_{j=0}^N\lambda_jt^{\lambda_j/\lambda_0}\exp(\bar{\mu}_j-z_j\phi)-\tilde{\mu}_0\quad\text{for}~t>0~\text{and}~\phi\in\mathbb R.
\]
Note that $\displaystyle\lim_{t\to0^+}h_2(t,\phi)=-\tilde{\mu}_0<0$ and $\displaystyle\lim_{t\to\infty}h_2(t,\phi)=\infty$ for any $\phi\in\mathbb R$. Thus, for any $\phi\in\mathbb R$, there exists $t>0$ such that $h_2(t,\phi)=0$. Since $\frac{\partial h_2}{\partial t}=\frac1{\lambda_0}\sum_{j=0}^N\lambda_j^2t^{(\lambda_j-\lambda_0)/\lambda_0}\exp(\bar{\mu}_j-z_j\phi)>0$ for $t>0$ and $\phi\in\mathbb R$, then we may use the implicit function theorem (cf. \cite[Theorem~3.3.1]{2002krantz}) to conclude that there exists a unique smooth and positive function $c_0^*:\mathbb R\to\mathbb R^+$ such that $h_2(c_0^*(\phi),\phi)=0$ for $\phi\in\mathbb R$. Therefore by equation \eqref{eq:1.14}, we obtain the smooth and positive functions $c_i^*$ for $i=1,\dots,N$, and complete the proof of Proposition~\ref{prop:2.5}.
\end{proof}
\begin{prop}
\label{prop:2.6}
$\displaystyle\lim_{\Lambda\to\infty}\|c_{i,\Lambda}-c_i^*\|_{\mathcal{C}^m\left[a,b\right]}=0$ for $i=0,1,\dots,N$, $a<b$ and $m\in\mathbb{N}$, where $\displaystyle\|h\|_{\mathcal{C}^m\left[a,b\right]}:=\sum_{k=0}^m\|h^{(k)}\|_{\infty}$ for $h\in\mathcal C^m\left[a,b\right]$.
\end{prop}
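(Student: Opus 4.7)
My plan is to apply the implicit function theorem on Banach spaces (Deimling, Theorem~15.1) to the rescaled equation \eqref{eq:2.09} at the reference point $(\delta,\tilde{c}_{0,\delta})=(0,c_0^*|_{[a,b]})$, deducing that $c_{0,\Lambda}\to c_0^*$ in $\mathcal{C}^m[a,b]$. The convergence of $c_{i,\Lambda}$ for $i=1,\dots,N$ will then follow by composing the limit with the explicit formula \eqref{eq:1.10}.

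Fix $a<b$ and $m\in\mathbb{N}$. By Proposition~\ref{prop:2.5} there exist $0<\alpha<\beta$ with $\alpha\leq c_0^*(\phi)\leq\beta$ on $[a,b]$. Set $U=\{u\in\mathcal{C}^m[a,b]:\alpha/2<u(\phi)<2\beta\text{ on }[a,b]\}$, an open subset of $\mathcal{C}^m[a,b]$, and define $F:\mathbb{R}\times U\to\mathcal{C}^m[a,b]$ by
\[
F(\delta,u)(\phi)=\delta\ln u(\phi)+\lambda_0\sum_{j=0}^{N}\lambda_j\,u(\phi)^{\lambda_j/\lambda_0}\exp(\bar{\mu}_j-z_j\phi)-\lambda_0\tilde{\mu}_0-\delta\hat{\mu}_0.
\]
Because $\mathcal{C}^m[a,b]$ is a Banach algebra under pointwise multiplication and the generators $t\mapsto\ln t$ and $t\mapsto t^{\lambda_j/\lambda_0}$ are smooth on the positive range $(\alpha/2,2\beta)$, the corresponding Nemytskii operators are $\mathcal{C}^1$ from $U$ into $\mathcal{C}^m[a,b]$, so $F$ is $\mathcal{C}^1$ on a neighbourhood of $(0,c_0^*)$. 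Equation~\eqref{eq:1.15} gives $F(0,c_0^*)=0$.

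The Fréchet derivative $D_u F(0,c_0^*)$ is multiplication by
\[
K(\phi)=\sum_{j=0}^{N}\lambda_j^2(c_0^*(\phi))^{(\lambda_j-\lambda_0)/\lambda_0}\exp(\bar{\mu}_j-z_j\phi),
\]
which is smooth and uniformly positive on $[a,b]$, hence both $K$ and $1/K$ lie in $\mathcal{C}^m[a,b]$ and $D_u F(0,c_0^*)$ is a topological isomorphism. The implicit function theorem thus yields $\delta_0>0$ and a continuous curve $\delta\mapsto u_\delta\in U$ with $u_0=c_0^*$ and $F(\delta,u_\delta)=0$ for $0\leq\delta<\delta_0$. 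On the other hand, the pointwise equation $F(\delta,u)(\phi)=0$ is strictly increasing in $u(\phi)>0$ (this is exactly the monotonicity observed in the proof of Proposition~\ref{prop:2.1}), so its positive pointwise solution is unique, forcing $u_\delta=\tilde{c}_{0,\delta}|_{[a,b]}=c_{0,\Lambda}|_{[a,b]}$ for $\Lambda>\delta_0^{-1}$. Continuity of the curve at $\delta=0$ then gives $\|c_{0,\Lambda}-c_0^*\|_{\mathcal{C}^m[a,b]}\to 0$ as $\Lambda\to\infty$. The case $i\geq 1$ follows by applying the continuous Nemytskii map $u\mapsto u^{\lambda_i/\lambda_0}\exp(\bar{\mu}_i-z_i\,\cdot\,)$ from $U$ to $\mathcal{C}^m[a,b]$ to the convergent sequence $c_{0,\Lambda}$, together with \eqref{eq:1.10} and \eqref{eq:1.14}.

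The main technical obstacle is the passage from the pointwise definition of $F$ to genuine $\mathcal{C}^1$-regularity between the Banach spaces, i.e.\ verifying that $u\mapsto\ln u$ and $u\mapsto u^{\lambda_j/\lambda_0}$ are Fréchet differentiable from $U\subset\mathcal{C}^m[a,b]$ into $\mathcal{C}^m[a,b]$ with the expected multiplicative derivatives. This is where the uniform lower bound $u\geq\alpha/2>0$ built into $U$ and the Banach algebra property of $\mathcal{C}^m[a,b]$ are indispensable: they allow the chain rule to be iterated so that all derivatives of the composition up to order $m$ remain in $\mathcal{C}^m[a,b]$ and depend continuously on $u$.
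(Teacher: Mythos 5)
Your proposal is correct and follows essentially the same strategy as the paper: rescale by $\delta=\Lambda^{-1}$, formulate the rescaled equation as $F(\delta,u)=0$ on $\mathcal{C}^m[a,b]$, verify that the Fréchet derivative at $(0,c_0^*)$ is a multiplication isomorphism, invoke the implicit function theorem on Banach spaces, and identify the resulting curve with the pointwise-unique solution. The one genuine (though minor) difference is your choice of unknown: you work directly with $u=c_{0,\Lambda}$, which forces the Nemytskii operator to contain $\ln u$ and fractional powers $u^{\lambda_j/\lambda_0}$ and therefore requires the careful restriction to the positivity cone $U$ to justify $\mathcal{C}^1$-regularity. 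The paper instead substitutes $w=\ln c_{0,\Lambda}$, so its operator $H$ involves only $\exp\bigl(\frac{\lambda_i}{\lambda_0}w+\bar\mu_i-z_i\phi\bigr)$ and a linear term in $w$; the entire-function $\exp$ makes the Nemytskii map smooth on all of $\mathcal{C}^m[a,b]$ with no domain restriction. Both routes are valid, but the logarithmic change of variable sidesteps exactly the technical point you flag as the "main obstacle" in your final paragraph, which is why the paper adopts it.
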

\begin{proof}Fix $m\in\mathbb N$, $a,b\in\mathbb R$ and $a<b$ arbitrarily. Let $\|\cdot\|_{\mathcal{C}^m}:=\|\cdot\|_{\mathcal{C}^m[a,b]}$ for notation convenience.
For $\Lambda>0$, let $\delta=\Lambda^{-1}$, $\tilde{c}_{0,\delta}=c_{0,\Lambda}$, $w_\delta=\ln\tilde{c}_{0,\delta}$ and $w^*=\ln c_0^*$.
Obviously, $\delta\to0^+$ is equivalent to $\Lambda\to\infty$.
Hence by \eqref{eq:2.03}, it suffices to show that $\displaystyle\lim_{\delta\to0^+}\|e^{w_\delta}-e^{w^*}\|_{\mathcal{C}^m}=0$.
Because $w_\delta=\ln\tilde{c}_{0,\delta}$ and $\tilde{c}_{0,\delta}=c_{0,\Lambda}$, equation \eqref{eq:2.09} can be denoted as
\begin{equation}
\label{eq:2.10}
H(w_\delta(\phi),\delta)=0
\end{equation}
for $\delta>0$ and $\phi\in [a,b]$, where $H$ is a $\mathcal{C}^1$-function on $\mathcal{C}^m[a,b]\times\mathbb R$ defined by
\begin{equation}
\label{eq:2.11}
H(w(\phi),\delta)=-\tilde{\mu}_0+\sum_{i=0}^N\lambda_i\exp\left(\frac{\lambda_i}{\lambda_0}w(\phi)+\bar{\mu}_i-z_i\phi\right)-\frac{\hat{\mu}_0-w(\phi)}{\lambda_0}\cdot\delta
\end{equation}
for all $w\in\mathcal{C}^m[a,b]$ and $\phi\in[a,b]$.
Note that $H(w^*,0)=0$ by Proposition~\ref{prop:2.5}.
A direct calculation for Fr\'{e}chet derivative of \eqref{eq:2.11} gives
\[D_wH(w(\phi),\delta)=\frac1{\lambda_0}\sum_{i=0}^N\lambda_i^2\exp\left(\frac{\lambda_i}{\lambda_0}w(\phi)+\bar{\mu}_i-z_i\phi\right)+\frac{\delta}{\lambda_0}\]
for all $w\in\mathcal{C}^m[a,b]$ and $\phi\in[a,b]$.
Then $D_wH(w^*(\phi),0)=\frac1{\lambda_0}\sum_{i=0}^N\lambda_i^2(c_0^*(\phi))^{\lambda_i/\lambda_0}\exp(\bar{\mu}_i-z_i\phi)>0$ for $\phi\in[a,b]$.
Here we have used the fact that $w^*=\ln c_0^*$.
This implies that $D_wH(w^*,0)I$ is a bounded and invertible linear map on the Banach space $\mathcal{C}^m[a,b]$, where $I$ is an identity map.
Hence by the implicit function theorem on Banach spaces (cf. \cite[Corollary 15.1]{1973deimling}), there exist an open subset $B_{\delta_0}(w^*)\times(-\delta_0,\delta_0)\subsetneq\mathcal{C}^m[a,b]\times\mathbb{R}$ and a unique $\mathcal C^1$-function $\tilde{w}(\cdot,\delta)$ of $\delta\in(-\delta_0,\delta_0)$ with $\tilde{w}(\cdot,\delta)\in B_{\delta_0}(w^*)\subsetneq\mathcal{C}^m[a,b]$ for $\delta\in(-\delta_0,\delta_0)$ such that 
\[H(\tilde{w}(\cdot,\delta),\delta)=0~~\text{for all}~\delta\in(-\delta_0,\delta_0),\]
which gives $\displaystyle\lim_{\delta\to0^+}\|\tilde{w}(\cdot,\delta)-w^*\|_{\mathcal{C}^m}=0$.
By Proposition~\ref{prop:2.1}, equation \eqref{eq:2.10} has a unique solution $w_\delta$, which implies $\tilde{w}(\cdot,\delta)=w_\delta(\cdot)$ for $\delta\in(-\delta_0,\delta_0)$.
Therefore, we obtain $\displaystyle\lim_{\delta\to0^+}\left\|w_\delta-w^*\right\|_{\mathcal{C}^m}=0$, i.e., $\displaystyle\lim_{\delta\to0^+}\|e^{w_\delta}-e^{w^*}\|_{\mathcal{C}^m}=0$, and complete the proof of Proposition~\ref{prop:2.6}.
\end{proof}

\begin{rmk}
\label{rmk2}
Because $\displaystyle f_\Lambda(\phi)=\sum_{i=1}^Nz_ic_{i,\Lambda}(\phi)$ and $\displaystyle f^*(\phi)=\sum_{i=1}^Nz_ic_i^*(\phi)$ for $\phi\in\mathbb R$, Proposition~\ref{prop:2.6} gives $\displaystyle\lim_{\Lambda\to\infty}\|f_\Lambda-f^*\|_{\mathcal{C}^m[a,b]}$ for $m\in\mathbb N$ and $a<b$.
\end{rmk}

For function $f^*$, we have the following propositions.
\begin{prop}
\label{prop:2.7}
Function $\displaystyle f^*(\phi)=\sum_{i=1}^Nz_ic_i^*(\phi)$ is strictly decreasing on $\mathbb R$, where functions $c_i^*$ are obtained in Proposition~\ref{prop:2.5}.
\end{prop}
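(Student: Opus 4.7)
The plan is to mimic the argument of Proposition~\ref{prop:2.2} but applied directly to the limiting algebraic system~\eqref{eq:1.14}--\eqref{eq:1.15}, and then close with Cauchy--Schwarz. First, take $\ln$ in~\eqref{eq:1.14} and differentiate in $\phi$ to obtain
\[\frac{(c_i^*)'}{c_i^*}=\frac{\lambda_i}{\lambda_0}\frac{(c_0^*)'}{c_0^*}-z_i,\quad i=1,\ldots,N,\]
which rearranges to $(c_i^*)'=c_i^*\left(\frac{\lambda_i}{\lambda_0}\frac{(c_0^*)'}{c_0^*}-z_i\right)$. Differentiating the algebraic identity~\eqref{eq:1.15} in $\phi$ and using $z_0=0$ gives $\sum_{i=0}^N\lambda_i(c_i^*)'=0$. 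Substituting the first relation into the second and solving for $(c_0^*)'$ yields
\[(c_0^*)'=\frac{\lambda_0\,c_0^*\,A}{B},\qquad A:=\sum_{i=1}^N\lambda_iz_ic_i^*,\quad B:=\sum_{i=0}^N\lambda_i^2c_i^*.\]

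Next, compute the derivative of $f^*$. Using $(c_i^*)'=c_i^*(\frac{\lambda_i}{\lambda_0}\frac{(c_0^*)'}{c_0^*}-z_i)$ and the formula for $(c_0^*)'$ above,
\[(f^*)'(\phi)=\sum_{i=1}^Nz_i(c_i^*)'=\frac{(c_0^*)'}{\lambda_0c_0^*}\sum_{i=1}^N\lambda_iz_ic_i^*-\sum_{i=1}^Nz_i^2c_i^*=\frac{A^2-BC}{B},\]
where $C:=\sum_{i=1}^Nz_i^2c_i^*$. Since $c_i^*>0$ on $\mathbb{R}$ by Proposition~\ref{prop:2.5}, we have $B>0$, so it remains to prove $A^2<BC$.

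For this, apply the Cauchy--Schwarz inequality to the $N$-vectors $(\lambda_i\sqrt{c_i^*})_{i=1}^N$ and $(z_i\sqrt{c_i^*})_{i=1}^N$:
\[A^2\leq\left(\sum_{i=1}^N\lambda_i^2c_i^*\right)\left(\sum_{i=1}^Nz_i^2c_i^*\right)=\left(\sum_{i=1}^N\lambda_i^2c_i^*\right)\cdot C<B\cdot C,\]
where the second factor equals $C$ because $z_0=0$, and the last strict inequality holds because the solvent term $\lambda_0^2c_0^*>0$ has been dropped from $B$. Therefore $(f^*)'(\phi)<0$ for every $\phi\in\mathbb{R}$.

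The delicate point is ensuring \emph{strict} monotonicity. Plain Cauchy--Schwarz only gives $\leq$, with equality possible when $(\lambda_i\sqrt{c_i^*})$ and $(z_i\sqrt{c_i^*})$ are proportional; such proportionality cannot be excluded a priori. The strict inequality here comes from a different source, namely that the solvent index $i=0$ contributes $\lambda_0^2c_0^*>0$ to $B$ but contributes nothing to $A$ or $C$ since $z_0=0$. This uses essentially both the electrical neutrality of solvent molecules and the positivity of $c_0^*$ from Proposition~\ref{prop:2.5}. An alternative route would be to pass to the limit in the identity $f_\Lambda'=-{\bf z}^{\mathsf{T}}(D_\Lambda+G)^{-1}{\bf z}$ from Proposition~\ref{prop:2.2} using Remark~\ref{rmk2}, but that only yields $(f^*)'\leq0$, so the direct computation above is preferable.
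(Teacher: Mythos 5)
Your proof is correct and follows essentially the same route as the paper's: differentiate the limiting relations \eqref{eq:1.14}--\eqref{eq:1.15}, solve for $(c_0^*)'$, and bound $(f^*)'$ via Cauchy's inequality, with you spelling out the strictness argument (the extra $\lambda_0^2 c_0^*>0$ term in $B$) that the paper states tersely. One small point worth making explicit is that the final step $\bigl(\sum_{i=1}^N\lambda_i^2c_i^*\bigr)C<BC$ also requires $C=\sum_{i=1}^N z_i^2c_i^*>0$, which holds because some $z_i\neq0$ (indeed $z_iz_j<0$ for some $i,j\in\{1,\dots,N\}$) and $c_i^*>0$ by Proposition~\ref{prop:2.5}.
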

\begin{proof}Differentiating \eqref{eq:1.14} and \eqref{eq:1.15} with respect to $\phi$ gives
\begin{align}
\label{eq:2.12}
&\sum_{j=0}^N\lambda_j\frac{\mathrm{d}c_j^*}{\mathrm{d}\phi}=0,\\
&\label{eq:2.13}\frac{\mathrm{d}c_i^*}{\mathrm{d}\phi}=\frac{\lambda_i}{\lambda_0}\frac{c_i^*}{c_0^*}\frac{\mathrm{d}c_0^*}{\mathrm{d}\phi}-z_ic_i^*,\quad\text{for}~\phi\in\mathbb R~\text{and}~i=1,\dots,N.\end{align}
Multiply the equation \eqref{eq:2.13} by $\lambda_i$ for $i=1,\dots,N$ and add them together.
Then by equation \eqref{eq:2.12}, we obtain
\[
\frac{\mathrm{d}c_0^*}{\mathrm{d}\phi}=\lambda_0c_0^*\frac{\sum_{i=1}^Nz_i\lambda_ic_i^*}{\sum_{i=0}^N\lambda_i^2c_i^*}\quad\text{for}~\phi\in\mathbb R.
\]
Consequently, we have
\[
\frac{\mathrm{d}f^*}{\mathrm{d}\phi}=\sum_{i=1}^Nz_i\frac{\mathrm{d}c_i^*}{\mathrm{d}\phi}
=\frac1{\lambda_0c_0^*}\frac{\mathrm{d}c_0^*}{\mathrm{d}\phi}\sum_{i=1}^Nz_i\lambda_ic_i^*-\sum_{i=1}^Nz_i^2c_i^*=\frac{(\sum_{i=1}^Nz_i\lambda_ic_i^*)^2}{\sum_{i=0}^N\lambda_i^2c_i^*}-\sum_{i=1}^Nz_i^2c_i^*<0\quad\text{for}~\phi\in\mathbb R.\]
Here the last inequality comes from Cauchy's inequality, and the fact that $\lambda_i>0$, $c_i^*>0$ for $i\in\{0,1,\dots,N\}$, and $z_iz_j<0$ for some $i,j\in\{1,\dots,N\}$.
Therefore, we complete the proof of Proposition~\ref{prop:2.7}.
\end{proof}

\begin{prop}
\label{prop:2.8}
Function $f^*$ is bounded and $m^*<f^*(\phi)<M^*$ for all $\phi\in\mathbb{R}$, where $\displaystyle m^*=\lim_{\phi\to\infty}f^*(\phi)<0$ and $\displaystyle M^*=\lim_{\phi\to-\infty}f^*(\phi)>0$.
\end{prop}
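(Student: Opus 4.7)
The plan is to split the proof into three steps: boundedness of $f^*$, existence of the monotone limits $m^*$ and $M^*$, and identification of their signs. Boundedness is immediate from \eqref{eq:1.15}: since every summand $\lambda_i c_i^*(\phi)$ is positive, \eqref{eq:1.15} yields $0 < c_i^*(\phi) \leq \tilde{\mu}_0/\lambda_i$ for all $\phi \in \mathbb{R}$ and all $i$, whence $|f^*(\phi)| \leq \sum_{i=1}^N |z_i|\tilde{\mu}_0/\lambda_i < \infty$. Combining this with the strict monotone decrease of $f^*$ (Proposition~\ref{prop:2.7}), a bounded monotone function on $\mathbb{R}$ has finite limits $M^* = \lim_{\phi\to-\infty} f^*(\phi)$ and $m^* = \lim_{\phi\to\infty} f^*(\phi)$, and strict monotonicity gives $m^* < f^*(\phi) < M^*$ for every $\phi$ (by picking $\phi' > \phi$ and using $f^*(\phi) > f^*(\phi') \geq m^*$, and analogously on the other side).

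The substantive step is to prove $m^* < 0$ and $M^* > 0$. By hypothesis, both $I^+ := \{i \in \{1,\dots,N\} : z_i > 0\}$ and $I^- := \{i \in \{1,\dots,N\} : z_i < 0\}$ are nonempty; I focus on $m^* < 0$, the case $M^* > 0$ being symmetric. The key intermediate claim is that $c_0^*(\phi) \to 0$ as $\phi \to \infty$: fixing any $j \in I^-$, rearranging \eqref{eq:1.14} gives $(c_0^*(\phi))^{\lambda_j/\lambda_0} = c_j^*(\phi)\exp(z_j\phi - \bar{\mu}_j)$, and since $c_j^*$ is bounded and $z_j < 0$, the right-hand side tends to $0$, forcing $c_0^*(\phi) \to 0$. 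Consequently, for each $i \in I^+$, the representation $c_i^*(\phi) = (c_0^*(\phi))^{\lambda_i/\lambda_0}\exp(\bar{\mu}_i - z_i\phi)$ also tends to $0$, because both factors vanish.

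Feeding these asymptotics back into \eqref{eq:1.15}, the terms indexed by $\{0\} \cup I^+$ vanish as $\phi \to \infty$, so $\sum_{i \in I^-} \lambda_i c_i^*(\phi) \to \tilde{\mu}_0$. Writing $-f^*(\phi) = \sum_{i \in I^-} |z_i| c_i^*(\phi) - \sum_{i \in I^+} z_i c_i^*(\phi)$ and setting $\kappa = \min_{i \in I^-}(|z_i|/\lambda_i) > 0$, the first sum satisfies $\sum_{i \in I^-} |z_i| c_i^*(\phi) \geq \kappa \sum_{i \in I^-}\lambda_i c_i^*(\phi) \to \kappa\tilde{\mu}_0 > 0$, while the second tends to $0$; hence $-m^* \geq \kappa\tilde{\mu}_0 > 0$, i.e., $m^* < 0$. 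The mirror argument with $\phi \to -\infty$ (swapping the roles of $I^+$ and $I^-$, using now that for $i \in I^+$, $\exp(-z_i\phi) \to \infty$ forces $c_0^* \to 0$) produces $M^* > 0$. The main obstacle is the asymptotic claim $c_0^*(\phi) \to 0$ at $\pm\infty$, which couples \eqref{eq:1.14} to the constraint \eqref{eq:1.15} in a nontrivial way; once this is in hand, the sign identification is essentially bookkeeping.
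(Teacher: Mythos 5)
Your proof is correct and follows essentially the same route as the paper: boundedness from \eqref{eq:1.15}, existence of the monotone limits from Proposition~\ref{prop:2.7}, and the key intermediate step that $c_0^*(\phi)\to 0$ as $\phi\to\infty$ (obtained from \eqref{eq:1.14} with an index $z_j<0$), from which $c_i^*(\phi)\to 0$ for all $z_i>0$. The only difference is in the last step: the paper shows $m^*<0$ by contradiction (if $m^*=0$ then all $c_i^*\to 0$, violating \eqref{eq:1.15}), whereas you produce a direct quantitative bound $-m^*\geq\kappa\tilde\mu_0$ with $\kappa=\min_{i\in I^-}|z_i|/\lambda_i$. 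Your variant is slightly more informative since it yields an explicit estimate, but conceptually it is the same argument.
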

\begin{proof}To prove boundedness of $f^*$, we use Proposition~\ref{prop:2.5} and equation~\ref{eq:1.15} to get
\begin{equation}
\label{eq:2.14}
0<c_i^*(\phi)<\tilde{\mu}_0/\lambda_i\quad\text{for}~\phi\in\mathbb R~\text{and}~i=0,1,\dots,N,
\end{equation}
and
\begin{equation}
\label{eq:2.15}
|f^*(\phi)|\leq\sum_{i=1}^N|z_i|c_i^*(\phi)\leq\tilde{\mu}_0\sum_{i=1}^N\frac{|z_i|}{\lambda_i}\leq\tilde{\mu}_0N\max_{0\leq i\leq N}\frac{|z_i|}{\lambda_i}\quad\text{for}~\phi\in\mathbb R.
\end{equation}
On the other hand, by \eqref{eq:1.14} and \eqref{eq:1.16}, function $f^*$ can be represented as
\begin{equation}
\label{eq:2.16}
f^*(\phi)=\sum_{i=1}^Nz_i(c_0^*(\phi))^{\lambda_i/\lambda_0}\exp(\bar{\mu}_i-z_i\phi)\quad\text{for}~\phi\in\mathbb R.
\end{equation}
By \eqref{eq:2.15} and Proposition~\ref{prop:2.7}, function $f^*$ is strictly decreasing and bounded on $\mathbb R$ so the limit $\displaystyle\lim_{\phi\to\infty}f^*(\phi)$, denoted by $m^*$, exists and is finite.
By \eqref{eq:1.14} and \eqref{eq:2.14}, we obtain
\begin{equation}
\label{eq:2.17}
\lim_{\phi\to\infty}c_0^*(\phi)=\lim_{\phi\to\infty}\left[c_i^*(\phi)\exp(-\bar{\mu}_i+z_i\phi)\right]^{\lambda_0/\lambda_i}=0\quad\text{for}~z_i<0.
\end{equation}
Moreover, we use \eqref{eq:1.14} and \eqref{eq:2.17} to get
\begin{equation}
\label{eq:2.18}
\lim_{\phi\to\infty}c_i^*(\phi)=\lim_{\phi\to\infty}\left[(c_0^*(\phi))^{\lambda_i/\lambda_0}\exp(\bar{\mu}_i-z_i\phi)\right]=0\quad\text{for}~z_i>0,
\end{equation}
and hence \eqref{eq:2.16} and \eqref{eq:2.18} imply $\displaystyle m^*=\lim_{\phi\to\infty}f^*(\phi)\leq0$.
Now, we prove $m^*<0$ by contradiction.
Suppose $m^*=0$.
Then \eqref{eq:2.18} gives $\displaystyle\lim_{\phi\to\infty}c_i^*(\phi)=0$ for $i=0,1,\dots,N$ which contradicts with equation \eqref{eq:1.15} (by letting $\phi\to\infty$) and $\tilde{\mu}_0>0$.
Similarly, we may prove $\displaystyle\lim_{\phi\to-\infty}f^*(\phi)=M^*>0$ and complete the proof of Proposition~\ref{prop:2.8}.
\end{proof}

\section{Proof of Theorem~\ref{thm:1.1}}
\label{sec3}

The existence and uniqueness of $\phi_\Lambda$ can be proved by the standard Direct method. One may refer the proof in Appendix I.
The uniform boundedness of $\phi_\Lambda$ and the convergence of $\phi_\Lambda$ are proved as follows.

\subsection{Uniform boundedness of \texorpdfstring{$\phi_\Lambda$}{} and \texorpdfstring{$c_{0,\Lambda}(\phi_\Lambda)$}{}}
\label{sec3.1}

\begin{lma}
\label{lma:3.1}
There exists a constant $M_0\geq1$ independent of $\Lambda$ such that $\displaystyle\max_{x\in\overline{\Omega}}c_{0,\Lambda}(\phi_\Lambda(x))\leq M_0$ for $\Lambda\geq1$.
\end{lma}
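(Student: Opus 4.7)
The plan is to extract a uniform bound directly from the algebraic relation \eqref{eq:1.11} satisfied by $c_{0,\Lambda}$, without actually using any PDE information about $\phi_\Lambda$. In fact, I expect to prove the stronger pointwise statement that $c_{0,\Lambda}(\phi) \le M_0$ for \emph{every} $\phi \in \mathbb{R}$ and every $\Lambda \ge 1$, from which the lemma follows by specializing to $\phi = \phi_\Lambda(x)$.

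First I would substitute the assumption (A2), namely $\mu_0 = \Lambda \lambda_0 \tilde{\mu}_0 + \hat{\mu}_0$, into \eqref{eq:1.11}. Using $z_0 = 0$ together with $\bar{\mu}_0 = \hat{\mu}_0 - \tfrac{\lambda_0}{\lambda_0}\hat{\mu}_0 = 0$, the term with $j=0$ on the left-hand side simplifies to $\Lambda \lambda_0^2\, c_{0,\Lambda}(\phi)$, so the equation becomes
\[
\ln c_{0,\Lambda}(\phi) + \Lambda \lambda_0^2\, c_{0,\Lambda}(\phi) + \Lambda \lambda_0 \sum_{j=1}^N \lambda_j\, (c_{0,\Lambda}(\phi))^{\lambda_j/\lambda_0}\, \exp(\bar{\mu}_j - z_j \phi) \;=\; \Lambda \lambda_0 \tilde{\mu}_0 + \hat{\mu}_0.
\]

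Second, I would observe that by Proposition~\ref{prop:2.1} each term in the sum over $j \ge 1$ is strictly positive, so dropping it yields the one-sided estimate
\[
\ln c_{0,\Lambda}(\phi) + \Lambda \lambda_0^2\, c_{0,\Lambda}(\phi) \;\le\; \Lambda \lambda_0 \tilde{\mu}_0 + \hat{\mu}_0.
\]
A simple case split then finishes the argument: if $c_{0,\Lambda}(\phi) \le 1$ there is nothing to prove, while if $c_{0,\Lambda}(\phi) > 1$ then $\ln c_{0,\Lambda}(\phi) > 0$ can be discarded on the left, giving
\[
c_{0,\Lambda}(\phi) \;\le\; \frac{\tilde{\mu}_0}{\lambda_0} + \frac{\hat{\mu}_0}{\Lambda \lambda_0^2}\;\le\;\frac{\tilde{\mu}_0}{\lambda_0} + \frac{\hat{\mu}_0}{\lambda_0^2} \quad\text{for all }\Lambda \ge 1,
\]
and one may take $M_0 := \max\!\bigl(1,\, \tilde{\mu}_0/\lambda_0 + \hat{\mu}_0/\lambda_0^2\bigr)$, which is independent of $\Lambda$ and of $\phi$.

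There is essentially no obstacle here: the whole proof is algebraic once one realizes that the $j=0$ term in \eqref{eq:1.11} alone is strong enough to dominate the right-hand side, because all other summands have the same sign and can be thrown away. The crucial structural input is simply $z_0 = 0$ (so the $j=0$ exponential disappears) together with the linear $c_{0,\Lambda}^{\lambda_0/\lambda_0} = c_{0,\Lambda}$ scaling in that one term, which is exactly what lets us read off a bound $c_{0,\Lambda} \lesssim \tilde{\mu}_0/\lambda_0$ with no dependence on $\phi$.
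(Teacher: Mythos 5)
Your argument is correct and is essentially the same as the paper's: both start from the algebraic identity \eqref{eq:1.11} (equivalently \eqref{eq:2.04}) with $z_0=\bar\mu_0=0$, drop the nonnegative $\ln c_{0,\Lambda}$ and the sum over $j\ge 1$ after a case split at $c_{0,\Lambda}>1$, and arrive at the same constant $M_0=\max\{\tilde\mu_0/\lambda_0+\hat\mu_0/\lambda_0^2,1\}$. Your observation that the bound holds for all $\phi\in\mathbb{R}$ (not merely $\phi=\phi_\Lambda(x)$) is a minor, harmless strengthening that the paper's proof also implicitly establishes.
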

\begin{proof}
Let $\displaystyle M_0=\max\left\{\frac{\tilde{\mu_0}}{\lambda_0}+\frac{\hat{\mu_0}}{\lambda_0^2}, 1\right\}\geq1$ and $\Lambda\geq1$.
We claim that $c_{0,\Lambda}(\phi_\Lambda(x))\leq M_0$ for $x\in\overline{\Omega}$.
Suppose that $\Omega_1=\{x\in\overline{\Omega}:c_{0,\Lambda}(\phi_\Lambda(x))>1\}$ is nonempty.
Otherwise, due to $M_0\geq1$, $c_{0,\Lambda}(\phi_\Lambda(x))\leq1\leq M_0$ for $x\in\overline{\Omega}$ which is trivial.
By \eqref{eq:2.04} and (A2), we obtain
\[c_{0,\Lambda}(\phi_{\Lambda}(x))\leq\frac{\tilde{\mu}_0}{\lambda_0}+\frac{\hat\mu_0}{\Lambda\lambda_0^2}\leq M_0\]
for $x\in\Omega_1$.
Here we have used the fact that $\ln c_{0,\Lambda}(\phi_\Lambda(x))>0$ for $x\in\Omega_1$, $\Lambda\geq1$, and $z_0=\bar\mu_0=0$.
Therefore, we complete the proof of Lemma~\ref{lma:3.1}.
\end{proof}

\begin{lma}
\label{lma:3.2}
There exists a positive constant $M_1$ independent of $\Lambda$ such that $\left\|\phi_{\Lambda}\right\|_{L^\infty(\Omega)}\leq M_1$ for  $\Lambda\geq1$.
\end{lma}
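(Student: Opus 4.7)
I would derive a uniform $H^1(\Omega)$ estimate on $\phi_\Lambda$ by testing \eqref{eq:1.12} against $\phi_\Lambda$, and then upgrade it to a uniform $L^\infty$ bound by a De Giorgi--Stampacchia truncation. The reason the author flags \eqref{eq:1.10} as crucial is that $f_\Lambda$ is globally unbounded on $\mathbb R$ by Proposition~\ref{prop:2.4}, yet $\lim_\Lambda f_\Lambda = f^*$ has the finite range $(m^*,M^*)$ by Proposition~\ref{prop:2.8}; consequently $f_\Lambda^{-1}(-\|\rho_0\|_\infty)$ cannot be controlled uniformly in $\Lambda$ once $\|\rho_0\|_\infty$ is large, and a plain super-/sub\-solution or interior-maximum argument will not close. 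The substitute I would exploit is a \emph{one-sided}, $\Lambda$-uniform bound on $f_\Lambda(\phi_\Lambda(\cdot))$ on the super-level set $\{\phi_\Lambda\geq 0\}$ (and its mirror on $\{\phi_\Lambda\leq 0\}$), which is exactly the structural input a truncation method needs.

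\textbf{One-sided control of $f_\Lambda(\phi_\Lambda)$.} For $x\in\Omega$ with $\phi_\Lambda(x)\geq 0$ and any $i$ with $z_i>0$, equation \eqref{eq:1.10} and Lemma~\ref{lma:3.1} give
\[
c_{i,\Lambda}(\phi_\Lambda(x))=c_{0,\Lambda}(\phi_\Lambda(x))^{\lambda_i/\lambda_0}\exp\left(\bar\mu_i-z_i\phi_\Lambda(x)\right)\leq M_0^{\lambda_i/\lambda_0}e^{\bar\mu_i},
\]
because $e^{-z_i\phi_\Lambda(x)}\leq 1$ and $c_{0,\Lambda}(\phi_\Lambda(x))\leq M_0$. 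Summing and discarding the non-positive terms with $z_j<0$ yields a $\Lambda$-independent constant $A$ with $f_\Lambda(\phi_\Lambda(x))\leq A$ on $\{\phi_\Lambda\geq 0\}$; the symmetric argument supplies a constant $B$ with $f_\Lambda(\phi_\Lambda(x))\geq -B$ on $\{\phi_\Lambda\leq 0\}$.

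\textbf{$H^1$ estimate.} Multiplying \eqref{eq:1.12} by $\phi_\Lambda$, integrating by parts, and using $\partial_\nu\phi_\Lambda=(\phi_{bd}-\phi_\Lambda)/\eta$ for $\eta>0$ (when $\eta=0$, test with $\phi_\Lambda-\tilde\phi_{bd}$ for a smooth extension $\tilde\phi_{bd}$ of $\phi_{bd}$), we obtain
\[
\int_\Omega\epsilon|\nabla\phi_\Lambda|^2+\frac{1}{\eta}\int_{\partial\Omega}\epsilon\,\phi_\Lambda^2=\frac{1}{\eta}\int_{\partial\Omega}\epsilon\,\phi_{bd}\phi_\Lambda+\int_\Omega\rho_0\phi_\Lambda+\int_\Omega\phi_\Lambda f_\Lambda(\phi_\Lambda).
\]
The monotonicity of $f_\Lambda$ (Proposition~\ref{prop:2.2}) gives $\phi f_\Lambda(\phi)\leq|f_\Lambda(0)|\,|\phi|$, and $\sup_{\Lambda\geq 1}|f_\Lambda(0)|<\infty$ by Remark~\ref{rmk2}. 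Combining this with the Poincar\'e/trace inequality $\|u\|_{H^1}^2\leq C(\|\nabla u\|_{L^2}^2+\|u\|_{L^2(\partial\Omega)}^2)$ and Young's inequality, I can absorb the linear-in-$\phi_\Lambda$ terms and reach $\|\phi_\Lambda\|_{H^1(\Omega)}\leq C_1$ uniformly in $\Lambda$.

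\textbf{Stampacchia upgrade and main obstacle.} For $K\geq\|\phi_{bd}\|_\infty$, test \eqref{eq:1.12} against $v_K:=(\phi_\Lambda-K)_+$. On $\partial\Omega\cap\{v_K>0\}$ the identity $(\phi_\Lambda-\phi_{bd})v_K=v_K^2+(K-\phi_{bd})v_K\geq v_K^2$ keeps the boundary term non-negative, and on $\operatorname{supp}(v_K)\subset\{\phi_\Lambda\geq 0\}$ the one-sided bound from the second paragraph gives $f_\Lambda(\phi_\Lambda)\leq A$. Hence
\[
\|v_K\|_{H^1}^2\leq C\int_\Omega v_K\leq C\,|A(K)|^{1-1/2^*}\|v_K\|_{L^{2^*}}\leq C'\,|A(K)|^{1-1/2^*}\|v_K\|_{H^1},
\]
with $A(K):=\{\phi_\Lambda>K\}$ and $2^*$ the Sobolev exponent (any large finite $p$ replaces $2^*$ when $d=2$, and the $d=1$ case is settled directly by $H^1\hookrightarrow L^\infty$). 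Since $2^*-1>1$, the standard Stampacchia iteration lemma produces $K^*$ depending only on $C'$ and $|A(K_0)|$ with $|A(K^*)|=0$; because $|A(K_0)|\leq C_1^2/K_0^2$ is uniform in $\Lambda$ by the previous paragraph, so is $K^*$, yielding $\phi_\Lambda\leq K^*$. The mirror argument with $-\phi_\Lambda$ and the bound $f_\Lambda(\phi_\Lambda)\geq -B$ on $\{\phi_\Lambda\leq 0\}$ gives $\phi_\Lambda\geq -K^{**}$, so $M_1:=\max(K^*,K^{**})$ is the claimed uniform constant. The hard part of the proof is precisely the first step --- extracting the one-sided bound on $f_\Lambda(\phi_\Lambda)$ from \eqref{eq:1.10} and Lemma~\ref{lma:3.1}; once that is in hand, both the $H^1$ estimate and the truncation iteration follow along routine lines.
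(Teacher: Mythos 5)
Your proof is correct, but it takes a genuinely different route from the paper's. The paper argues by contradiction at an interior extremum of $\bar{\phi}_\Lambda = \phi_\Lambda - \psi$ (with $\psi$ removing the inhomogeneity $\rho_0$): at a maximum point $x_k$ one has $f_{\Lambda_k}(\phi_{\Lambda_k}(x_k)) \ge 0$, and if $\bar{\phi}_{\Lambda_k}(x_k) \to \infty$ the authors chase through \eqref{eq:1.10} and Lemma~\ref{lma:3.1} to force \emph{all} $c_{i,k}(\phi_k(x_k)) \to 0$, which contradicts the algebraic constraint \eqref{eq:2.04} as $\Lambda_k \to \infty$ (see \eqref{eq:3.05}). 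You instead observe that Lemma~\ref{lma:3.1} combined with \eqref{eq:1.10} yields a $\Lambda$-uniform, \emph{one-sided} bound $f_\Lambda \le A$ on $\{\phi_\Lambda \ge 0\}$ (and $f_\Lambda \ge -B$ on $\{\phi_\Lambda \le 0\}$) — the point being that on the half-line $\phi\geq 0$ each $c_{i,\Lambda}$ with $z_i>0$ is bounded by $M_0^{\lambda_i/\lambda_0}e^{\bar\mu_i}$, a constant independent of $\Lambda$ precisely because $\bar\mu_i$ (unlike $\mu_i$) does not carry the $\Lambda$-growth from (A2) — and then run an energy estimate plus De Giorgi--Stampacchia truncation. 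The two routes rest on exactly the same structural ingredient, namely that $c_{0,\Lambda}$ is uniformly bounded and the other species are tied to it exponentially through \eqref{eq:1.10}, but your version isolates this into a clean one-sided growth condition on the nonlinearity, which is more in the spirit of standard semilinear $L^\infty$ theory and works directly at the $H^1$ level without invoking classical regularity or (in the Neumann variant) Hopf's lemma. The paper's argument is shorter once the $W^{2,p}$ regularity is available and exploits the pointwise structure at an extremum; yours is more robust and would extend more easily to lower-regularity data. Two small points worth tightening in your write-up: (a) the Poincar\'{e} step $\|v_K\|_{L^{2^*}} \lesssim \|\nabla v_K\|_{L^2}$ needs the zero set $\{\phi_\Lambda \le K\}$ to have measure bounded below \emph{uniformly in $\Lambda$}, which you get from the $H^1$ bound by taking $K_0$ large so that $|A(K_0)| \le C_1^2/K_0^2 \le |\Omega|/2$; and (b) $\sup_{\Lambda\ge1}|f_\Lambda(0)| < \infty$ follows more directly from your own one-sided bounds at $\phi=0$ (giving $-B\le f_\Lambda(0)\le A$) than from the $\mathcal C^m[a,b]$-convergence in Remark~\ref{rmk2}, which only controls large $\Lambda$ and would need a separate continuity argument on $\Lambda\in[1,\Lambda_0]$.
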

\begin{proof}
Let $\psi$ be the solution of the equation $-\nabla\cdot(\epsilon\nabla\psi)=\rho_0$ in $\Omega$ with the Robin boundary condition $\psi+\eta\frac{\partial\psi}{\partial\nu}=0$ on $\partial\Omega$, and let $\bar{\phi}_\Lambda=\phi_\Lambda-\psi$.
Then function $\bar{\phi}_\Lambda$ satisfies
\begin{equation}
\label{eq:3.01}
\nabla\cdot(\epsilon\nabla\bar{\phi}_\Lambda)=f_\Lambda(\phi_\Lambda)\quad\text{in}~\Omega.
\end{equation}
By \eqref{eq:2.03}, $c_{i,\Lambda}(\phi_\Lambda)$ satisfies
\begin{equation}
\label{eq:3.02}
c_{i,\Lambda}(\phi_{\Lambda})=\left(c_{0,\Lambda}(\phi_{\Lambda})\right)^{\lambda_i/\lambda_0}\exp\left(\bar{\mu}_i-z_i\psi-z_i\bar{\phi}_{\Lambda}\right)\quad\text{for}~i=1,\dots,N.
\end{equation}
Since $\psi$ is independent of $\Lambda$ and is continuous on $\overline{\Omega}$, then $\bar{\phi}_\Lambda$ is uniformly bounded if and only if$\phi_\Lambda$ is uniformly bounded. Thus, it suffices to show that $\displaystyle\max_{x\in\overline{\Omega}}\bar{\phi}_\Lambda(x)\leq M_1$ and $\displaystyle\min_{x\in\overline{\Omega}}\bar{\phi}_\Lambda\geq-M_1$ for $\Lambda\geq1$, where $M_1$ is a positive constant independent of $\Lambda$.

Now we prove that $\displaystyle\max_{x\in\overline{\Omega}}\bar{\phi}_\Lambda(x)\leq M_1$ for $\Lambda\geq1$, where $M_1$ is a positive constant independent of $\Lambda$.
Suppose by contradiction that there exists a sequence $\Lambda_k$ with $\displaystyle\lim_{k\to\infty}\Lambda_k=\infty$ such that $\displaystyle\max_{\overline{\Omega}}\bar{\phi}_{\Lambda_k}\geq k$ for $k\in\mathbb N$.
Then there exists $x_k\in\Omega$ such that $\displaystyle\bar{\phi}_{\Lambda_k}(x_k)=\max_{\overline{\Omega}}\bar{\phi}_{\Lambda_k}$ which implies $\nabla\bar{\phi}_{\Lambda_k}\left(x_k\right)=0$ and $\Delta\bar{\phi}_{\Lambda_k}(x_k)\leq0$.
Note that because of the Robin boundary condition of $\bar{\phi}_{\Lambda_k}$, maximum point $x_k$ cannot be located on the boundary $\partial\Omega$ as $k$ sufficiently large.
Hence without loss of generality, we assume each $x_k\in\Omega$ for $k\in\mathbb N$.
For a sake of simplicity, in this proof, we set $c_{i,k}:=c_{i,\Lambda_k}$, $\phi_k:=\phi_{\Lambda_k}$, $f_k:=f_{\Lambda_k}$, and $\bar{\phi}_k:=\bar{\phi}_{\Lambda_k}$.
Hence by equation \eqref{eq:3.01} with $\nabla\bar{\phi}_k(x_k)=0$, $\Delta\bar{\phi}_k(x_k)\leq0$ and function $\epsilon$ is positive, we have
\begin{equation}
\label{eq:3.03}
0\leq-\nabla\epsilon\left(x_k\right)\cdot\nabla\bar{\phi}_k\left(x_k\right)-\epsilon\left(x_k\right)\Delta\bar{\phi}_k\left(x_k\right)=f_k(\phi_k(x_k)).
\end{equation}
Because $\displaystyle f_k(\phi_k)=\sum_{i=1}^Nz_ic_{i,k}(\phi_k)$, we may use \eqref{eq:3.02} and \eqref{eq:3.03} to get
\begin{equation}
\label{eq:3.04}
\begin{aligned}
0&<\sum_{z_i<0}\left(-z_i\right)[c_{0,k}(\phi_k(x_k))]^{\lambda_i/\lambda_0}\exp\left(\bar{\mu}_i-z_i\psi\left(x_k\right)-z_i\bar{\phi}_k\left(x_k\right)\right)\\&\leq\sum_{z_i>0}z_i[c_{0,k}(\phi_k(x_k))]^{\lambda_i/\lambda_0}\exp\left(\bar{\mu}_i-z_i\psi\left(x_k\right)-z_i\bar{\phi}_k\left(x_k\right)\right).
\end{aligned}
\end{equation}
Applying Lemma~\ref{lma:3.1} to \eqref{eq:3.02} for $z_i>0$, we obtain $\displaystyle\lim_{k\to\infty}c_{i,k}(\phi_k(x_k))=0$ for all $i$ with $z_i>0$.
Then \eqref{eq:3.02} and \eqref{eq:3.04} give
\[
\lim_{k\to\infty}\sum_{z_i<0}\left(-z_i\right)[c_{0,k}(\phi_k(x_k))]^{\lambda_i/\lambda_0}\exp\left(\bar{\mu}_i-z_i\psi\left(x_k\right)-z_i\bar{\phi}_k\left(x_k\right)\right)=0,
\]
which means $\displaystyle\lim_{k\to\infty}c_{i,k}(\phi_k(x_k))=0$ for all $i$ with $z_i<0$.
Inserting $x=x_k$ into \eqref{eq:3.02} with $z_i<0$, we can obtain
\[
c_{0,k}(\phi_k(x_k))=[(c_{i,k}(\phi_k(x_k))]^{\lambda_0/\lambda_i}\left[\exp\left(-\bar{\mu}_i+z_i\psi\left(x_k\right)+z_i\bar{\phi}_k\left(x_k\right)\right)\right]^{\lambda_0/\lambda_i}\to0
\]
as $k\to\infty$.
Hence $\displaystyle\lim_{k\to\infty}c_{i,k}(\phi_k(x_k))=0$ for $i=0,1,\dots,N$.
By \eqref{eq:2.03}, \eqref{eq:2.04} and (A2), we get the following contradiction:
\begin{equation}
\label{eq:3.05}
0\geq\lim_{k\to\infty}\frac{\ln c_{0,k}(\phi_k(x_k))}{\Lambda_k}=\lim_{k\to\infty}\left(\lambda_0\tilde{\mu}_0+\frac{\hat{\mu}_0}{\Lambda_k}-\sum_{i=0}^N\lambda_ic_{i,k}(\phi_k(x_k))\right)=\lambda_0\tilde{\mu}_0>0.
\end{equation}
Therefore, we complete the proof to show that $\displaystyle\max_{x\in\overline{\Omega}}\bar{\phi}_\Lambda(x)\leq M_1$ for $\Lambda\geq1$, where $M_1$ is a positive constant independent of $\Lambda$.

It remains to prove that $\displaystyle\min_{x\in\overline{\Omega}}\bar{\phi}_\Lambda\geq-M_1$ for $\Lambda\geq1$, where $M_1$ is a positive independent of $\Lambda$.
Suppose by contradiction that there exists a sequence $\Lambda_k$ with $\displaystyle\lim_{k\to\infty}\Lambda_k=\infty$ such that $\displaystyle\min_{\overline{\Omega}}\bar{\phi}_k\leq-k$ for $k\in\mathbb N$.
Then there exists $x_k\in\Omega$ such that $\displaystyle\bar{\phi}_k(x_k)=\min_{\overline{\Omega}}\bar{\phi}_k$, which implies $\nabla\bar{\phi}_k(x_k)=0$ and $\Delta\bar{\phi}_k(x_k)\geq0$.
Notice that because of the Robin boundary condition of $\bar{\phi}_{\Lambda_k}$, minimum point $x_k$ cannot be located on the boundary $\partial\Omega$ as $k$ sufficiently large. Hence without loss of generality, we assume each $x_k\in\Omega$ for $k\in\mathbb N$.
Thus, as for \eqref{eq:3.04}, we have
\begin{equation}
\label{eq:3.06}
\begin{aligned}
0&<\sum_{z_i>0}z_i[c_{0,k}(\phi_k(x_k))]^{\lambda_i/\lambda_0}\exp\left(\bar{\mu}_i-z_i\psi\left(x_k\right)-z_i\bar{\phi}_k\left(x_k\right)\right)\\&\leq\sum_{z_i<0}(-z_i)[c_{0,k}(\phi_k(x_k))]^{\lambda_i/\lambda_0}\exp\left(\bar{\mu}_i-z_i\psi\left(x_k\right)-z_i\bar{\phi}_k\left(x_k\right)\right).
\end{aligned}
\end{equation}
Applying Lemma~\ref{lma:3.1} to \eqref{eq:3.02} for $z_i<0$, we obtain $\displaystyle\lim_{k\to\infty}c_{i,k}(\phi_k(x_k))=0$ for all $i$ with $z_i<0$.
Then \eqref{eq:3.02} and \eqref{eq:3.06} give $\displaystyle\lim_{k\to\infty}c_{i,k}(\phi_k(x_k))=0$ for all $i$ with $z_i>0$.
Inserting $x=x_k$ into \eqref{eq:3.02} with $z_i>0$, we have $\displaystyle\lim_{k\to\infty}c_{0,k}(\phi_k(x_k))=0$, and hence $\displaystyle\lim_{k\to\infty}c_{i,k}(\phi_k(x_k))=0$ for each $i=0,1,\dots,N$.
As for \eqref{eq:3.05}, we also get a contradiction and complete the proof to show $\displaystyle\min_{x\in\overline{\Omega}}\bar{\phi}_\Lambda(x)\geq-M_1$ for $\Lambda\geq1$, where $M_1$ is a positive constant independent of $\Lambda$.
Therefore, we complete the proof of Lemma~\ref{lma:3.2}.\end{proof}

By Lemma~\ref{lma:3.2}, we have
\begin{lma}
\label{lma:3.3}
There exists positive constant $M_2$ independent of $\Lambda$ such that $\displaystyle\min_{x\in\overline{\Omega}}c_{0,\Lambda}(\phi_\Lambda(x))\geq M_2$ for $\Lambda\geq1$.
\end{lma}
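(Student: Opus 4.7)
The plan is to argue by contradiction using the defining implicit equation \eqref{eq:2.04} for $c_{0,\Lambda}$, combined with the uniform $L^\infty$ bound on $\phi_\Lambda$ already supplied by Lemma~\ref{lma:3.2}. Specifically, suppose no such $M_2$ exists; then there are sequences $\Lambda_k\geq 1$ and $x_k\in\overline\Omega$ with $c_k:=c_{0,\Lambda_k}(\phi_{\Lambda_k}(x_k))\to 0$. Writing $\phi_k:=\phi_{\Lambda_k}(x_k)$, Lemma~\ref{lma:3.2} gives $\phi_k\in[-M_1,M_1]$, so every exponential factor $\exp(\bar\mu_j-z_j\phi_k)$ is bounded uniformly in $k$ and $j$.

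Next I would substitute $\mu_0=\Lambda_k\lambda_0\tilde\mu_0+\hat\mu_0$ from (A2) into \eqref{eq:2.04} evaluated at $\phi=\phi_k$, and divide by $\Lambda_k\lambda_0$ to obtain
\[
\frac{\ln c_k}{\Lambda_k\lambda_0}
=\tilde\mu_0+\frac{\hat\mu_0}{\Lambda_k\lambda_0}
-\sum_{j=0}^N\lambda_j\,c_k^{\lambda_j/\lambda_0}\exp(\bar\mu_j-z_j\phi_k).
\]
Since $\lambda_j/\lambda_0>0$, $c_k\to 0$, and the exponential factors are uniformly bounded, every term in the sum vanishes as $k\to\infty$. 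Because $\Lambda_k\geq 1$, the quantity $\hat\mu_0/(\Lambda_k\lambda_0)$ stays in the bounded range $(0,\hat\mu_0/\lambda_0]$. Hence the right-hand side is eventually at least $\tilde\mu_0/2>0$.

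Finally, I would observe that the left-hand side $(\ln c_k)/(\Lambda_k\lambda_0)$ is strictly negative for all large $k$, since $c_k\to 0$ forces $\ln c_k<0$ while $\Lambda_k\lambda_0>0$. This contradicts the lower bound $\tilde\mu_0/2$ derived above, proving that such sequences cannot exist and establishing the uniform lower bound $M_2>0$.

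The step I expect to be the main (mild) obstacle is ensuring the sum in the rescaled equation vanishes uniformly in $k$: this requires both $c_k\to 0$ (the standing assumption) and uniform boundedness of $\exp(\bar\mu_j-z_j\phi_k)$, which is exactly where Lemma~\ref{lma:3.2} is essential. The division by $\Lambda_k$ is the key algebraic manipulation, since it converts the unbounded growth of $\mu_0$ into the harmless bounded constant $\tilde\mu_0+\hat\mu_0/(\Lambda_k\lambda_0)$ on the right-hand side while leaving $\ln c_k$ tamed by the $\Lambda_k$ denominator only to the extent needed to detect its sign.
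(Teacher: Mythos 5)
Your argument is correct and is essentially the same as the paper's: you rearrange the implicit equation \eqref{eq:2.04} at $\phi=\phi_{\Lambda_k}(x_k)$ using (A2), observe that Lemma~\ref{lma:3.2} forces the sum $\sum_j\lambda_j c_k^{\lambda_j/\lambda_0}\exp(\bar\mu_j-z_j\phi_k)$ to vanish as $c_k\to 0$, and derive a sign contradiction with the surviving $\tilde\mu_0>0$ on the right-hand side. The paper phrases this as the inequality \eqref{eq:3.07} for the sum $\sum_i\lambda_i c_{i,\Lambda_k}$, which by \eqref{eq:2.03} is exactly your sum, so the two proofs differ only in bookkeeping.
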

\begin{proof}
Suppose by contrary that there exists $\Lambda_k$ with $\displaystyle\lim_{k\to\infty}\Lambda_k=\infty$, and $x_k\in\overline{\Omega}$ is the minimum point of $c_{0,\Lambda_k}(\phi_{\Lambda_k}(x))$ such that $\displaystyle\lim_{k\to\infty}c_{0,\Lambda_k}(\phi_{\Lambda_k}(x_k))=0$.
Notice that by \eqref{eq:2.04}, $c_{0,\Lambda}(\phi_\Lambda(x))>0$ for $\Lambda\geq1$ and $x\in\overline{\Omega}$.
Then, there exists $N_1\in\mathbb{N}$ such that $\ln c_{0,\Lambda_k}(\phi_{\Lambda_k}(x_k))<0$ for $k>N_1$.
By \eqref{eq:2.03}, \eqref{eq:2.04} at $\phi=\phi_{\Lambda_k}(x_k)$ and assumption (A2), we get
\begin{equation}
\label{eq:3.07}
\sum_{i=0}^N\lambda_ic_{i,\Lambda_k}(\phi_{\Lambda_k}(x_k))>\tilde{\mu}_0+\frac{\hat{\mu}_0}{\Lambda_k\lambda_0}\quad\text{for}~k>N_1.
\end{equation}
On the other hand, by \eqref{eq:3.02} and Lemma~\ref{lma:3.2}, we obtain $\displaystyle\lim_{k\to\infty}c_{i,\Lambda_k}(\phi_{\Lambda_k}(x_k))=0$ for $i=0,1,\dots,N$, which contradicts with \eqref{eq:3.07} as $k\to\infty$.
Hence we complete the proof of Lemma~\ref{lma:3.3}.
\end{proof}

\begin{rmk}
\label{rmk3}
By Lemmas \ref{lma:3.1}--\ref{lma:3.3} and \eqref{eq:2.03}, we have $M_3\leq c_{i,\Lambda}(\phi_\Lambda(x))\leq M_4$ for $x\in\overline{\Omega}$, $\Lambda\geq1$ and $i=0,1,\dots,N$, where $M_3$ and $M_4$ are positive constants independent of $\Lambda$.
Moreover, due to $\displaystyle f_\Lambda(\phi_\Lambda)=\sum_{i=1}^N z_i c_{i,\Lambda}(\phi_\Lambda)$, $\|f_\Lambda(\phi_\Lambda)\|_{L^\infty}\leq M_5$ for $\Lambda\geq1$, where $M_5$ is a positive constant independent of $\Lambda$.
\end{rmk}

\subsection{Convergence of \texorpdfstring{$\phi_\Lambda$}{}}
\label{sec3.2}

Since $\phi_\Lambda$ is the solution of equation $-\nabla\cdot(\epsilon\nabla\phi_\Lambda)=\rho_0+f_\Lambda(\phi_\Lambda)$ in $\Omega$ with the Robin boundary condition $\phi_\Lambda+\eta\frac{\partial\phi_\Lambda}{\partial\nu}=\phi_{bd}$ on $\partial\Omega$, we use the $W^{2,p}$ estimate (cf. \cite[Theorem 15.2]{1959agmon}) to get
\[
\|\phi_\Lambda\|_{W^{2,p}(\Omega)}\leq C(\|\rho_0+f_\Lambda(\phi_\Lambda)\|_{L^p(\Omega)}+\|\phi_{bd}\|_{W^{1,p}(\Omega)})
\]
for all $p>1$, where $C$ is a positive constant independent of $\Lambda$.
Hence by Remark~\ref{rmk3}, we have the uniform bound estimate of $\phi_\Lambda$ in $W^{2,p}$ norm. This implies that there exists a sequence of functions $\{\phi_{\Lambda_k}\}_{k=1}^{\infty}$, with $\displaystyle\lim_{k\to\infty}\Lambda_k=\infty$, such that $\phi_{\Lambda_k}$ converges to $\phi^*$ weakly in $W^{2,p}(\Omega)$.
By Remark~\ref{rmk2} of Proposition~\ref{prop:2.6}, we have the convergence of function $f_{\Lambda_k}$ to $f^*$ in $\mathcal{C}^m[-M_1,M_1]$ for $m\in\mathbb N$ so $\phi^*$ satisfies equation \eqref{eq:1.17} in weak sense, where the positive constant $M_1$ comes from Lemma~\ref{lma:3.2}.
Let $w_k=\phi_{\Lambda_k}-\phi^*$, $c_{i,k}:=c_{i,\Lambda_k}$, and $f_k:=f_{\Lambda_k}$.
Then by Sobolev embedding, $w_k\in\mathcal{C}^{1,\alpha}(\Omega)$ for $\alpha\in(0,1)$, and $\displaystyle\lim_{k\to\infty}\|w_k\|_{\mathcal{C}^{1,\alpha}(\Omega)}=0$.
Moreover, $w_k$ satisfies
\[
-\nabla\cdot(\epsilon\nabla w_k)=f_k(w_k+\phi^*)-f^*(\phi^*)\quad\text{in}~\Omega
\]
with the boundary condition $w_k+\eta\frac{\partial w_k}{\partial\nu}=0$ on $\partial\Omega$.
Using the Schauder's estimate (cf. \cite[Theorem 6.30]{1977gilbarg}) with the mathematical induction, we get
\begin{equation}
\label{eq:3.08}
\begin{aligned}
\|w_k\|_{\mathcal{C}^{m+2,\alpha}(\Omega)}&\leq C\|f_k(w_k+\phi^*)-f^*(\phi^*)\|_{\mathcal{C}^{m,\alpha}(\Omega)}\\&\leq C'\sum_{i=1}^N\left(\|c_{i,k}(w_k+\phi^*)-c_i^*(w_k+\phi^*)\|_{\mathcal{C}^{m,\alpha}(\Omega)}+\|c_i^*(w_k+\phi^*)-c_i^*(\phi^*)\|_{\mathcal{C}^{m,\alpha}(\Omega)}\right),
\end{aligned}
\end{equation}
for all $m\in\mathbb N$ and $\alpha\in(0,1)$, where $C$ and $C'$ are positive constants independent of $k$.
By Proposition~\ref{prop:2.6} and induction hypothesis $\displaystyle\lim_{k\to\infty}\|w_k\|_{\mathcal{C}^{m,\alpha}(\Omega)}=0$, we may use \eqref{eq:3.08} to get $\displaystyle\lim_{k\to\infty}\|w_k\|_{\mathcal{C}^{m+2,\alpha}(\Omega)}=0$, i.e. $\displaystyle\lim_{k\to\infty}\|\phi_{\Lambda_k}-\phi^*\|_{C^{m+2,\alpha}(\Omega)}=0$ for $m\in\mathbb N$ and $\alpha\in (0,1)$.
Therefore, $\phi^*$ is the solution of equation \eqref{eq:1.17} with the Robin boundary condition \eqref{eq:1.08}.

To complete the proof of Theorem~\ref{thm:1.1}, we need to prove
\begin{claim}
\label{claim1}
For any $m\in\mathbb{N}$, we have $\displaystyle\lim_{\Lambda\to\infty}\|\phi_{\Lambda}-\phi^*\|_{\mathcal{C}^m(\Omega)}=0$.
\end{claim}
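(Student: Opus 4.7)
The plan is to upgrade the subsequential convergence established above into convergence of the entire family $\{\phi_\Lambda\}$ as $\Lambda\to\infty$. The missing piece is uniqueness of the solution $\phi^*$ of the limiting equation \eqref{eq:1.17} under the Robin boundary condition \eqref{eq:1.08}; once uniqueness is secured, a standard compactness--extraction argument removes the need to pass to a subsequence.

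For uniqueness, I would let $\phi_1^*,\phi_2^*\in\mathcal{C}^2(\overline\Omega)$ be two solutions of \eqref{eq:1.17}--\eqref{eq:1.08} and set $w=\phi_1^*-\phi_2^*$, so that $-\nabla\cdot(\epsilon\nabla w)=f^*(\phi_1^*)-f^*(\phi_2^*)$ in $\Omega$ with $w+\eta\,\frac{\partial w}{\partial\nu}=0$ on $\partial\Omega$. Multiplying by $w$ and integrating by parts gives
\[
\int_\Omega\epsilon|\nabla w|^2\,\mathrm{d}x+\int_{\partial\Omega}\frac{\epsilon}{\eta}w^2\,\mathrm{d}S=\int_\Omega\bigl(f^*(\phi_1^*)-f^*(\phi_2^*)\bigr)w\,\mathrm{d}x
\]
when $\eta>0$, while the boundary term is replaced by the Dirichlet trace $w|_{\partial\Omega}=0$ when $\eta=0$. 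The right-hand side is non-positive by the strict monotonicity of $f^*$ (Proposition~\ref{prop:2.7}), and the left-hand side is non-negative, so both sides must vanish. Hence $\nabla w\equiv 0$, which combined with either the boundary integral or the Dirichlet trace and the connectedness of $\Omega$ forces $w\equiv 0$.

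To close the convergence argument, I would proceed by contradiction: if $\|\phi_\Lambda-\phi^*\|_{\mathcal{C}^m(\Omega)}\not\to 0$ for some $m\in\mathbb N$, there exist $\varepsilon_0>0$ and $\Lambda_j\to\infty$ with $\|\phi_{\Lambda_j}-\phi^*\|_{\mathcal{C}^m(\Omega)}\geq\varepsilon_0$ for all $j$. By Remark~\ref{rmk3} and the $W^{2,p}$ estimate applied uniformly in $\Lambda$, the sequence $\{\phi_{\Lambda_j}\}$ is bounded in $W^{2,p}(\Omega)$ for every $p>1$, hence precompact in $\mathcal{C}^{1,\alpha}(\overline\Omega)$. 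Extracting a subsequence and bootstrapping through the Schauder estimate exactly as in Section~\ref{sec3.2} produces a limit $\tilde\phi^*\in\mathcal{C}^{m+2,\alpha}(\overline\Omega)$ that solves \eqref{eq:1.17}--\eqref{eq:1.08} and to which the subsequence converges in $\mathcal{C}^{m+2,\alpha}$. The uniqueness step above then gives $\tilde\phi^*=\phi^*$, contradicting $\|\phi_{\Lambda_j}-\phi^*\|_{\mathcal{C}^m(\Omega)}\geq\varepsilon_0$.

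I expect the uniqueness step to be the principal obstacle: because \eqref{eq:1.08} does not pin $\phi^*$ down pointwise on $\partial\Omega$ when $\eta>0$, one cannot directly invoke the maximum principle to eliminate constant differences. The energy identity together with the strict monotonicity of $f^*$ is the key device here, and the boundary term $\int_{\partial\Omega}\epsilon w^2/\eta\,\mathrm{d}S$ provides precisely the trace control needed to supplement $\int_\Omega\epsilon|\nabla w|^2\,\mathrm{d}x$ in ruling out nontrivial constant differences.
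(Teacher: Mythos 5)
Your proof is correct, and it reaches the same conclusion by a genuinely different route from the paper's. The paper establishes uniqueness of the limiting solution by a maximum-principle argument: writing $-\nabla\cdot(\epsilon\nabla u)=c(x)u$ with $c(x)$ the divided difference of $f^*$ (so $c<0$ by Proposition~\ref{prop:2.7}), applying the strong maximum principle to force any nonnegative maximum (and nonpositive minimum) of $u$ onto $\partial\Omega$, and then using the Robin condition $u=-\eta\,\partial u/\partial\nu$ together with the sign of the normal derivative at a boundary extremum to conclude $u\equiv 0$. You instead use the energy identity obtained by testing against $w$ and integrating by parts; the Robin boundary term $\int_{\partial\Omega}(\epsilon/\eta)w^2\,\mathrm{d}S$ plus $\int_\Omega\epsilon|\nabla w|^2\,\mathrm{d}x$ on the left must balance the nonpositive quantity $\int_\Omega\bigl(f^*(\phi_1^*)-f^*(\phi_2^*)\bigr)(\phi_1^*-\phi_2^*)\,\mathrm{d}x$ on the right, forcing both to vanish. (In fact, strict monotonicity of $f^*$ already gives $w\equiv 0$ directly from the vanishing of the right-hand side, so the appeal to $\nabla w\equiv 0$ plus the trace is a belt-and-braces alternative.) This energy argument is exactly the one the paper itself uses in Appendix~I to prove uniqueness of $\phi_\Lambda$ for fixed $\Lambda$, so your route is very much in the spirit of the paper, just deployed at $\Lambda=\infty$ rather than at finite $\Lambda$. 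Your concluding remark that the maximum principle cannot be invoked directly under the Robin condition is a little too pessimistic---the paper shows it can, because the sign of the zeroth-order coefficient $c<0$ combined with $u(x^*)=-\eta\,\partial u/\partial\nu(x^*)$ at a boundary extremum does close the argument---but this does not affect the validity of your proof, which stands on its own. The compactness-and-contradiction wrapper you use to pass from subsequential convergence to full convergence is the same as the paper's.
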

\begin{proof}
Suppose that there exist the sequences $\{\Lambda_k\}$ and $\{\tilde{\Lambda}_k\}$ tending to infinity such that sequences $\{\phi_{\Lambda_k}\}$ and $\{\phi_{\tilde{\Lambda}_k}\}$ have limits $\phi_1^*$ and $\phi_2^*$, respectively.
It is clear that $\phi_1^*$ and $\phi_2^*$ satisfy the equation \eqref{eq:1.17} with the Robin boundary condition \eqref{eq:1.08}.
Now we want to prove that $\phi_1^*\equiv\phi_2^*$.

Let $u=\phi_1^*-\phi_2^*$. Subtracting \eqref{eq:1.17} with $\phi^*=\phi_2^*$ from that with $\phi^*=\phi_1^*$, we obtain
\[
-\nabla\cdot(\epsilon\nabla u)=f^*(\phi_1^*)-f^*(\phi_2^*)=c(x)u\quad\text{in}~\Omega,
\]
where function $c$ is defined by
\[
c(x)=\begin{cases}\displaystyle\frac{f^*(\phi_1^*(x))-f^*(\phi_2^*(x))}{\phi_1^*(x)-\phi_2^*(x)},&\text{if}~\phi_1^*(x)\neq\phi_2^*(x);\\\displaystyle\frac{\mathrm{d}f^*}{\mathrm{d}\phi}(\phi_1^*(x)),&\text{if}~\phi_1^*(x)=\phi_2^*(x).\end{cases}
\]
By Proposition~\ref{prop:2.7}, we have $c<0$ in $\Omega$ which comes from the fact that if $f$ is a strictly decreasing function on $\mathbb R$, then $\frac{f(\alpha)-f(\beta)}{\alpha-\beta}<0$ for $\alpha\neq\beta$.
Since $\nabla\cdot(\epsilon\nabla u)+c(x)u=0$ with $c<0$ in $\Omega$, it is obvious that $u$ cannot be a nonzero constant.
Then by the strong maximum principle, we have that $u$ attains its nonnegative maximum value and nonnpositive minimum value at the boundary point.
Suppose $u$ has nonnegative maximum value and attains its maximum value at $x^*\in\partial\Omega$.
Then by the boundary condition of $u$ which is $u+\eta\frac{\partial u}{\partial\nu}=0$ on $\partial\Omega$, we get $u(x^*)=-\eta\frac{\partial u}{\partial\nu}(x^*)\leq0$, and hence $u\leq u(x^*)\leq0$ on $\overline{\Omega}$.
Similarly, we obtain $u\geq0$ in $\overline{\Omega}$, and hence $u\equiv0$.
Therefore, we conclude that $\phi_1^*\equiv\phi_2^*$ and complete the proof of Theorem~\ref{thm:1.1}.
\end{proof}

\section{Proof of Theorem~\ref{thm:1.2}}
\label{sec4}

We show the asymptotic limit of $\phi_\Lambda$ for the following two cases: (i) function $\rho_0$ is constant; (ii) function $\rho_0$ is nonconstant and satisfies $\int_{\Omega}\rho_0\,\mathrm{d}x=0$ in Sections~\ref{sec4.1} and \ref{sec4.2}, respectively.
For the case (i), the unique constant solution $\phi_\Lambda$ follows from Proposition~\ref{prop:2.2} and \ref{prop:2.4}, and the uniform convergence of $\phi_\Lambda$ can be obtained from Remark~\ref{rmk2} of Proposition~\ref{prop:2.6}.
For the case (ii), the existence, uniqueness, and regularity of $\phi_\Lambda$ can be proved by the standard Direct method, which is stated in Appendix I.
Unlike Lemma~\ref{lma:3.2}, we use Hopf's lemma to prove the uniform boundedness of $\phi_\Lambda$ with the Neumann boundary condition (See Lemma~\ref{lma:4.1}).
By Lemmas~\ref{lma:3.1},~\ref{lma:3.3} and~\ref{lma:4.1}, we can follow the same argument in Section~\ref{sec3.2} to complete the proof of Theorem~\ref{thm:1.2}(ii).

\subsection{Constant \texorpdfstring{$\rho_0$}{rho0}}
\label{sec4.1}

The following is the proof of Theorem~\ref{thm:1.2}(i).

\begin{proof}[Proof of Theorem~\ref{thm:1.2}(i)]Firstly, we fix $\Lambda>0$ and suppose that $\rho_0$ is constant.
Then by the strict monotonicity and unboundedness of $f_{\Lambda}$ (see Propositions~\ref{prop:2.2} and~\ref{prop:2.4}), there exists a unique $\hat{\phi}_{\Lambda}\in\mathbb{R}$ such that $\rho_0+f_{\Lambda}(\hat{\phi}_{\Lambda})=0$.
Hence $\phi_{\Lambda}\equiv\hat{\phi}_{\Lambda}$ is a constant solution of equation \eqref{eq:1.12} with the Neumann boundary condition \eqref{eq:1.09}.
For the uniqueness of solution $\phi_\Lambda$ up to a constant, suppose that $\phi_{1,\Lambda}$ and $\phi_{2,\Lambda}$ are the solutions of equation \eqref{eq:1.12} with the Neumann boundary condition \eqref{eq:1.09}.
We subtract the equation \eqref{eq:1.07} for $\phi=\phi_{2,\Lambda}$ from that for $\phi=\phi_{1,\Lambda}$ to get
\begin{equation}
\label{eq:4.01}
-\nabla\cdot(\epsilon\nabla(\phi_{1,\Lambda}-\phi_{2,\Lambda}))=\sum_{i=1}^Nz_i\left[c_{i,\Lambda}(\phi_{1,\Lambda})-c_{i,\Lambda}(\phi_{2,\Lambda})\right]=f_\Lambda(\phi_{1,\Lambda})-f_\Lambda(\phi_{2,\Lambda})\quad\text{in}~\Omega.
\end{equation} Multiplying the equation \eqref{eq:4.01} by $\phi_{1,\Lambda}-\phi_{2,\Lambda}$ and integrating it over $\Omega$, we have
\[-\int_\Omega(\phi_{1,\Lambda}-\phi_{2,\Lambda})\nabla\cdot(\epsilon\nabla(\phi_{1,\Lambda}-\phi_{2,\Lambda}))\mathrm{d}x=\int_\Omega(\phi_{1,\Lambda}-\phi_{2,\Lambda})[f_\Lambda(\phi_{1,\Lambda})-f_\Lambda(\phi_{2,\Lambda})]\mathrm{d}x.\]
Then using integration by parts and the Neumann boundary condition, we obtain
\[\int_\Omega\epsilon|\nabla(\phi_{1,\Lambda}-\phi_{2,\Lambda})|^2\mathrm{d}x=\int_\Omega(\phi_{1,\Lambda}-\phi_{2,\Lambda})[f_\Lambda(\phi_{1,\Lambda})-f_\Lambda(\phi_{2,\Lambda})]\mathrm{d}x\leq0.\]
Here we have used the monotone decreasing of $f_\Lambda$ (see Proposition~\ref{prop:2.2}) and function $\epsilon$ is positive.
Thus, $\phi_{1,\Lambda}-\phi_{2,\Lambda}\equiv C$ for some constant $C$, and we get the uniqueness of $\phi_\Lambda$ up to a constant.
This implies that $\phi_\Lambda$ is a constant solution and satisfies $\phi_\Lambda=\hat{\phi}_\Lambda+C$.
Due to the uniqueness of equation $\rho_0+f_\Lambda(\hat{\phi}_\Lambda)=0$, we obtain the constant $C=0$ and $\phi_{\Lambda}\equiv\hat{\phi}_{\Lambda}$. By Propositions~\ref{prop:2.7} and \ref{prop:2.8}, $f^*$ is strictly decreasing and the range of $f^*$ is exactly the interval $(m^*,M^*)$, there exists a unique $\phi^*\in\mathbb R$ such that $\rho_0+f^*(\phi^*)=0$ for $\rho_0\in(-M^*,-m^*)$.
By Remark~\ref{rmk2} of Proposition~\ref{prop:2.6}, we conclude that $\phi_\Lambda$ is uniformly bounded and $\displaystyle\lim_{\Lambda\to\infty}\hat{\phi}_{\Lambda}=\phi^*$.
The proof of Theorem~\ref{thm:1.2}(i) is complete.
\end{proof}

\subsection{Uniform boundedness of \texorpdfstring{$\phi_{\Lambda}$}{} and \texorpdfstring{$c_{0,\Lambda}$}{}}
\label{sec4.2}

Under the Neumann boundary condition, Lemma~\ref{lma:3.1} also holds true so $\displaystyle\max_{x\in\overline{\Omega}}c_{0,\Lambda}(\phi_\Lambda(x))\leq M_0$ for $\Lambda\geq1$, where $M_0$ is a positive constant independent of $\Lambda$.
However, due to the Neumann boundary condition, we have to use the Hopf's Lemma to modify the proof of  Lemma~\ref{lma:3.2} for the uniform boundedness of $\phi_\Lambda$ (see Lemma~\ref{lma:4.1}).Then  as for Lemma~\ref{lma:3.3}, we can use Lemma~\ref{lma:4.1} to prove $\displaystyle\min_{x\in\overline{\Omega}}c_{0,\Lambda}(\phi_\Lambda(x))\geq M_2$ for $\Lambda\geq1$, where $M_2$ is a positive constant independent of $\Lambda$. Now we state and prove Lemma~\ref{lma:4.1} as follows.

\begin{lma}
\label{lma:4.1}
Under the same hypotheses of Theorem~\ref{thm:1.2}(ii), let $\phi_{\Lambda}$ be the solution of equation \eqref{eq:1.12} with the Neumann boundary condition \eqref{eq:1.09}.
Then there exists a positive constant $M_6$ independent of $\Lambda$ such that $\|\phi_{\Lambda}\|_{L^\infty(\Omega)}\leq M_6$ for $\Lambda\geq1$.
\end{lma}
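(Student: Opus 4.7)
The plan is to run the same proof-by-contradiction scheme as in Lemma~\ref{lma:3.2}: extract a subsequence $\Lambda_k\to\infty$ for which, say, $\max_{\overline\Omega}\phi_{\Lambda_k}\to\infty$ (the case $\min_{\overline\Omega}\phi_{\Lambda_k}\to-\infty$ being entirely symmetric), pick a maximum point $x_k\in\overline\Omega$, and then use the decomposition $f_\Lambda=\sum_{i=1}^N z_ic_{i,\Lambda}$ together with \eqref{eq:2.03}, \eqref{eq:2.04}, Lemma~\ref{lma:3.1}, and (A2) to force $c_{i,\Lambda_k}(\phi_{\Lambda_k}(x_k))\to 0$ for every $i=0,1,\dots,N$ and reach the same contradiction $0\ge\lambda_0\tilde\mu_0>0$ displayed in \eqref{eq:3.05}. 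The bulk of this machinery carries over verbatim once I know that $x_k\in\Omega$ for all large $k$, because then $\nabla\phi_{\Lambda_k}(x_k)=0$ and $\Delta\phi_{\Lambda_k}(x_k)\le 0$ combined with \eqref{eq:1.12} give $f_{\Lambda_k}(\phi_{\Lambda_k}(x_k))\ge-\|\rho_0\|_{L^\infty(\Omega)}$, after which Lemma~\ref{lma:3.1} and the sign decomposition of $f_{\Lambda_k}$ close the argument as in \eqref{eq:3.04}--\eqref{eq:3.05}.

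The main new obstacle is that under the Neumann condition \eqref{eq:1.09} the maximum point $x_k$ may a priori lie on $\partial\Omega$, so the Robin-based exclusion used in Lemma~\ref{lma:3.2} no longer applies. I would rule this out via Hopf's lemma, as the author indicates. Assume $x_k\in\partial\Omega$; since $\phi_{\Lambda_k}(x_k)\to\infty$, for each $k$ there is a relative neighborhood $V_k\subset\overline\Omega$ of $x_k$ on which $\phi_{\Lambda_k}\ge\phi_{\Lambda_k}(x_k)-1$. Propositions~\ref{prop:2.2} and \ref{prop:2.4} tell us that $f_{\Lambda_k}$ is strictly decreasing with $\lim_{\phi\to\infty}f_{\Lambda_k}(\phi)=-\infty$, so for $k$ sufficiently large
\[
\rho_0+f_{\Lambda_k}(\phi_{\Lambda_k})\le \|\rho_0\|_{L^\infty(\Omega)}+f_{\Lambda_k}(\phi_{\Lambda_k}(x_k)-1)\le 0 \quad\text{in}~V_k\cap\Omega.
\]
Rewriting \eqref{eq:1.12} as $\epsilon\Delta\phi_{\Lambda_k}+\nabla\epsilon\cdot\nabla\phi_{\Lambda_k}=-\rho_0-f_{\Lambda_k}(\phi_{\Lambda_k})\ge 0$ shows that $\phi_{\Lambda_k}$ is a subsolution of a linear uniformly elliptic operator in $V_k\cap\Omega$ attaining its maximum at $x_k\in\partial\Omega$. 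Since $\rho_0$ is nonconstant by hypothesis, $\phi_{\Lambda_k}$ cannot be identically equal to $\phi_{\Lambda_k}(x_k)$ on $V_k\cap\Omega$ (otherwise \eqref{eq:1.12} would force $\rho_0$ to be constant), so Hopf's lemma at $x_k$ yields $\partial\phi_{\Lambda_k}/\partial\nu(x_k)>0$, contradicting \eqref{eq:1.09}. The parallel analysis at a minimum point uses $\lim_{\phi\to-\infty}f_{\Lambda_k}(\phi)=+\infty$ together with the supersolution form of Hopf's lemma to preclude boundary minima.

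The delicate step I expect to require the most care is arranging the sign condition $-\rho_0-f_{\Lambda_k}(\phi_{\Lambda_k})\ge 0$ on a subdomain that both touches $\partial\Omega$ at a point satisfying an interior-sphere condition and is fixed enough (as $k$ varies) for Hopf's lemma to apply with a usable constant. This is precisely where the unboundedness of $f_{\Lambda_k}$ at $\pm\infty$ (Propositions~\ref{prop:2.2} and \ref{prop:2.4}) must be combined with the $\mathcal{C}^2$-smoothness of $\partial\Omega$ assumed in Theorem~\ref{thm:1.2}. Once the boundary case is excluded, $x_k\in\Omega$ for all large $k$ and the interior argument of Lemma~\ref{lma:3.2} applies unchanged, completing the proof.
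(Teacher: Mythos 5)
Your high-level structure (contradiction argument, interior maximum handled as in Lemma~\ref{lma:3.2}, boundary maximum excluded via Hopf's lemma) matches the paper, but the way you try to invoke Hopf's lemma has a genuine gap. You rewrite \eqref{eq:1.12} as $\epsilon\Delta\phi_{\Lambda_k}+\nabla\epsilon\cdot\nabla\phi_{\Lambda_k}\ge 0$ near $x_k$ by asserting
$\rho_0+f_{\Lambda_k}(\phi_{\Lambda_k})\le\|\rho_0\|_{L^\infty}+f_{\Lambda_k}(\phi_{\Lambda_k}(x_k)-1)\le 0$ for large $k$. The second inequality requires $f_{\Lambda_k}(\phi_{\Lambda_k}(x_k)-1)\le-\|\rho_0\|_{L^\infty}$, and this does \emph{not} follow from Propositions~\ref{prop:2.2} and~\ref{prop:2.4}. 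Those propositions only give $\lim_{\phi\to\infty}f_\Lambda(\phi)=-\infty$ for each \emph{fixed} $\Lambda$; here both $\Lambda_k\to\infty$ and $\phi_{\Lambda_k}(x_k)\to\infty$, and the threshold beyond which $f_\Lambda(\phi)$ drops below a given level recedes to $+\infty$ as $\Lambda\to\infty$, because $f_\Lambda\to f^*$ locally uniformly and $f^*$ is bounded below by $m^*$ (Proposition~\ref{prop:2.8}). Thus if $\|\rho_0\|_{L^\infty}>|m^*|$ (entirely compatible with $\int_\Omega\rho_0\,\mathrm{d}x=0$) and $\Lambda_k$ grows quickly relative to $\phi_{\Lambda_k}(x_k)$, there is no reason for your sign condition to hold, and the subsolution property you need for Hopf's lemma is not established. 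The difficulty you flagged (getting a ``usable constant'' in Hopf's lemma) is not actually the issue, since Hopf only needs the qualitative strict positivity; the real obstacle is the diagonal asymptotics of $f_\Lambda(\phi)$, which you have not controlled.

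The paper's proof sidesteps this entirely by never needing a quantitative drop of $f_\Lambda$. It subtracts \emph{two} quantities: first the Neumann solution $\psi_1$ of $-\nabla\cdot(\epsilon\nabla\psi)=\rho_0$, normalized so that $\min_{\overline\Omega}\psi_1=0$ (so $\psi_1\ge0$ and $\rho_0$ is absorbed); and second the constant $\hat\phi_{\Lambda_k}$ solving $f_{\Lambda_k}(\hat\phi_{\Lambda_k})=0$, which is uniformly bounded by Theorem~\ref{thm:1.2}(i). Writing $\tilde\phi_k=\phi_{\Lambda_k}-\psi_1-\hat\phi_{\Lambda_k}$ and linearizing $f_{\Lambda_k}$ around $\hat\phi_{\Lambda_k}$ via the mean-value theorem yields $\nabla\cdot(\epsilon\nabla\tilde\phi_k)+c(x)\tilde\phi_k=-c(x)\psi_1\ge0$ with $c(x)<0$ by strict monotonicity (Proposition~\ref{prop:2.2}). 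The subsolution inequality is now automatic, with a sign that comes from the normalization of $\psi_1$ rather than from any estimate on $f_{\Lambda_k}$ at large arguments; since $\tilde\phi_k(x_k)\ge k-M>0$ for large $k$, Hopf's lemma for the operator $\nabla\cdot(\epsilon\nabla\cdot)+c(x)\cdot$ with $c\le 0$ gives $\partial\tilde\phi_k/\partial\nu(x_k)>0$, contradicting the Neumann condition. You should replace your sign-condition argument with this $\psi_1$-normalization plus mean-value linearization; the interior part of your proof then proceeds as you describe.
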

\begin{proof}
Based on $\int_{\Omega}\rho_0\,\mathrm{d}x=0$, let $\psi_1$ and $\psi_2$ be the solution of Poisson equation $-\nabla\cdot(\epsilon\nabla\psi)=\rho_0$ in $\Omega$ with the Neumann boundary condition \eqref{eq:1.09} such that $\displaystyle\min_{\overline{\Omega}}\psi_1=0$ and $\displaystyle\max_{\overline{\Omega}}\psi_2=0$.
Note that $\psi_1$ and $\psi_2$ are independent of $\Lambda$, and up to a constant.
Let $\bar{\phi}_{i,\Lambda}=\phi_\Lambda-\psi_i$ for $i=1,2$. Then $\bar{\phi}_{i,\Lambda}$ satisfies
\begin{equation}
\label{eq:4.02}
-\nabla\cdot(\epsilon\nabla\bar{\phi}_{i,\Lambda})=f_\Lambda(\phi_\Lambda)\quad\text{in}~\Omega,\quad\text{for}~i=1,2.
\end{equation}
Since $\psi_i$ is continuous on $\overline{\Omega}$, then $\bar{\phi}_{i,\Lambda}$ is uniformly bounded if and only if $\phi_\Lambda$ is uniformly bounded.
Thus, it suffices to show that $\displaystyle\max_{x\in\overline{\Omega}}\bar{\phi}_{1,\Lambda}(x)\leq M_7$ and $\displaystyle\min_{x\in\overline{\Omega}}\bar{\phi}_{2,\Lambda}(x)\geq-M_7$ for $\Lambda\geq1$, where $M_7$ is a positive constant independent of $\Lambda$.

Firstly, we prove that there exists a positive constant $M_7$ independent of $\Lambda$ such that $\displaystyle\max_{x\in\overline{\Omega}}\bar{\phi}_{1,\Lambda}(x)\leq M_7$ for $\Lambda\geq1$.
Suppose by contradiction that there exists a sequence $\Lambda_k$ with $\displaystyle\lim_{k\to\infty}\Lambda_k=\infty$ such that $\displaystyle\max_{\overline{\Omega}}\bar{\phi}_{1,\Lambda_k}\geq k$ for $k\in\mathbb N$.
As for the proof of Lemma~\ref{lma:3.2}, we need to prove
\begin{claim}
\label{claim2}
$\bar{\phi}_{1,\Lambda_k}$ attains its maximum value at $x_k\in\Omega$ for sufficient large $k$.
\end{claim}
\begin{proof}
Suppose by contradiction that the maximum point $x_k$ of $\bar{\phi}_{1,\Lambda_k}$ is located on the boundary $\partial\Omega$, i.e., $x_k\in\partial\Omega$ for $k\in\mathbb N$.
By Theorem~\ref{thm:1.2}(i), there exists a unique constant $\hat{\phi}_\Lambda\in\mathbb R$ such that $f_\Lambda(\hat{\phi}_\Lambda)=0$ and $|\hat{\phi}_\Lambda|\leq M$ for $\Lambda\geq1$, where $M$ is a positive constant independent of $\Lambda$.
For notation convenience, let $\tilde{\phi}_k:=\bar{\phi}_{1,\Lambda_k}-\hat{\phi}_{\Lambda_k}=\phi_{\Lambda_k}-\psi_1-\hat{\phi}_{\Lambda_k}$, $\phi_k:=\phi_{\Lambda_k}$, $\bar{\phi}_k:=\bar{\phi}_{1,\Lambda_k}$, $\hat{\phi}_k:=\hat{\phi}_{\Lambda_k}$, and $f_k:=f_{\Lambda_k}$.
Since $\hat{\phi}_k$ is a constant with $f_k(\hat{\phi}_k)=0$ and $|\hat{\phi}_k|\leq M$ for $k\in\mathbb N$, then equation \eqref{eq:4.02} can be transformed to
\begin{equation}
\label{eq:4.03}
-\nabla\cdot(\epsilon\nabla\tilde{\phi}_k)=f_k(\tilde{\phi}_k+\hat{\phi}_k+\psi_1)-f_k(\hat{\phi}_k)=c(x)(\tilde{\phi}_k+\psi_1)\quad\text{in}~\Omega,
\end{equation}
where function $c=c(x)$ is defined by
\[
c(x)=\begin{cases}
\displaystyle\frac{f_k(\tilde{\phi}_k(x)+\hat{\phi}_k+\psi_1(x))-f_k(\hat{\phi}_k)}{(\tilde{\phi}_k(x)+\hat{\phi}_k+\psi_1(x))-\hat{\phi}_k},&\text{if~}\tilde{\phi}_k(x)+\psi_1(x)\neq0;\\\displaystyle\frac{\mathrm{d}f_k}{\mathrm{d}\phi}(\hat{\phi}_k),&\text{if}~\tilde{\phi}_k(x)+\psi_1(x)=0.
\end{cases}
\]
By equation \eqref{eq:4.03}, we have $\nabla\cdot(\epsilon\nabla\tilde{\phi}_k)+c(x)\tilde{\phi}_k=-c(x)\psi_1\geq0$ in $\Omega$. Here we have used that $\displaystyle\min_{\overline{\Omega}}\psi_1=0$ and $c<0$ in $\Omega$ because of Proposition~\ref{prop:2.2}.
Notice that if $f$ is a strictly decreasing function on $\mathbb R$, then $\frac{f(\alpha)-f(\beta)}{\alpha-\beta}<0$ for $\alpha\neq\beta$.
Since $\hat{\phi}_k$ is a constant, $\tilde{\phi}_k$ and $\bar{\phi}_k$ have the same maximum point $x_k$ for $k\in\mathbb N$.
Moreover, $\tilde{\phi}_k(x_k)=\bar{\phi}_{1,\Lambda_k}(x_k)-\hat{\phi}_{\Lambda_k}\geq k-M>0$ if $k>M$.
Hence by Hopf's lemma, we get $\frac{\partial\tilde{\phi}_k}{\partial\nu}(x_k)>0$, which contradicts with the Neumann boundary condition.
Thus, $\bar{\phi}_k$ attains its maximum value at interior point $x_k\in\Omega$ for $k>M$, and we complete the proof of Claim~\ref{claim2}.
\end{proof}
\noindent Claim~\ref{claim2} implies $\nabla\bar{\phi}_{1,\Lambda_k}(x_k)=0$ and $\Delta\bar{\phi}_{1,\Lambda_k}(x_k)\leq0$ for $k>M$.
Then as for the proof of \eqref{eq:3.03}--\eqref{eq:3.05}, we conclude that $\displaystyle\max_{x\in\overline{\Omega}}\bar{\phi}_{1,\Lambda}(x)\leq M_7$ for $\Lambda\geq1$ which gives $\displaystyle\max_{x\in\overline{\Omega}}\phi_\Lambda(x)\leq M_6$ for $\Lambda\geq1$, where $M_6$ and $M_7$ are positive constants independent of $\Lambda$.

It remains to prove that $\displaystyle\min_{x\in\overline{\Omega}}\bar{\phi}_{2,\Lambda}(x)\geq-M_7$ for $\Lambda\geq1$, where $M_7$ is a positive constant independent of $\Lambda$.
Suppose by contradiction that there exists a sequence $\Lambda_k$ with $\displaystyle\lim_{k\to\infty}\Lambda_k=\infty$ such that $\displaystyle\min_{\overline{\Omega}}\bar{\phi}_{2,\Lambda_k}\leq-k$ for $k\in\mathbb N$.
As for the proof of Lemma~\ref{lma:3.2}, we need to prove
\begin{claim}
\label{claim3}
$\bar{\phi}_{2,\Lambda_k}$ attains its minimum value at $x_k\in\Omega$ for sufficient large $k$.
\end{claim}
\begin{proof}
Suppose by contradiction that the minimum point $x_k$ of $\bar{\phi}_{2,\Lambda_k}$ is located on the boundary $\partial\Omega$, i.e., $x_k\in\partial\Omega$ for $k\in\mathbb N$.
For a sake simplicity, we use the similar notations of Claim~\ref{claim2} and let $\tilde{\phi}_k:=\bar{\phi}_{2,\Lambda_k}-\hat{\phi}_{\Lambda_k}=\phi_{\Lambda_k}-\psi_2-\hat{\phi}_{\Lambda_k}$, $\phi_k:=\phi_{\Lambda_k}$, $\bar{\phi}_k:=\bar{\phi}_{2,\Lambda_k}$, $\hat{\phi}_k:=\hat{\phi}_{\Lambda_k}$.
Then equation \eqref{eq:4.02} becomes
\begin{equation}
\label{eq:4.04}
-\nabla\cdot(\epsilon\nabla\tilde{\phi}_k)=f_k(\tilde{\phi}_k+\hat{\phi}_k+\psi_2)-f_k(\hat{\phi}_k)=c(x)(\tilde{\phi}_k+\psi_2)\quad\text{in}~\Omega,
\end{equation}
where function $c=c(x)$ is defined by
\[c(x)=\begin{cases}
\displaystyle \frac{f_k(\tilde{\phi}_k(x)+\hat{\phi}_k+\psi_2(x))-f_k(\hat{\phi}_k)}{(\tilde{\phi}_k(x)+\hat{\phi}_k+\psi_2(x))-\hat{\phi}_k},&\text{if~}\tilde{\phi}_k(x)+\psi_2(x)\neq0;\\\displaystyle\frac{\mathrm{d}f_k}{\mathrm{d}\phi}(\hat{\phi}_k),&\text{if}~\tilde{\phi}_k(x)+\psi_2(x)=0.
\end{cases}
\]
From equation \eqref{eq:4.04}, we have $\nabla\cdot(\epsilon\nabla\tilde{\phi}_k)+c(x)\tilde{\phi}_k=-c(x)\psi_2\leq0$ in $\Omega$.
Here we have used that $\displaystyle\max_{\overline{\Omega}}\psi_2=0$ and $c<0$ in $\Omega$ because of Proposition~\ref{prop:2.2}.
Notice that if $f$ is a strictly decreasing function on $\mathbb R$, then $\frac{f(\alpha)-f(\beta)}{\alpha-\beta}<0$ for $\alpha\neq\beta$.
Since $\hat{\phi}_k$ is a constant, $\tilde{\phi}_k$ and $\bar{\phi}_k$ have the same minimum point $x_k$ for $k\in\mathbb N$.
Moreover, $\tilde{\phi}_k(x_k)=\bar{\phi}_{2,\Lambda_k}(x_k)-\hat{\phi}_{\Lambda_k}\leq -k+M<0$ if $k>M$.
Hence by Hopf's lemma, we get $\frac{\partial\tilde{\phi}_k}{\partial\nu}(x_k)<0$, which contradicts with the Neumann boundary condition.
Thus, $\bar{\phi}_k$ attains its minimum value at interior point $x_k\in\Omega$ for $k>M$, and we complete the proof of Claim~\ref{claim3}.
\end{proof}
\noindent Claim~\ref{claim3} implies $\nabla\bar{\phi}_{2,\Lambda_k}(x_k)=0$ and $\Delta\bar{\phi}_{2,\Lambda_k}(x_k)\geq0$ for $k>M$.
Then as for \eqref{eq:3.06},
we conclude that $\displaystyle\min_{x\in\overline{\Omega}}\bar{\phi}_{2,\Lambda}(x)\geq-M_7$ for $\Lambda\geq1$ which gives $\displaystyle\lim_{x\in\overline{\Omega}}\phi_\Lambda(x)\geq-M_6$ for $\Lambda\geq1$, where $M_6$ and $M_7$ are positive constants independent of $\Lambda$.
Therefore the proof of Lemma~\ref{lma:4.1} is complete.\end{proof}

The following is the proof of Theorem~\ref{thm:1.2}(ii).
Because $\phi_\Lambda$ is the solution of $-\nabla\cdot(\epsilon\nabla\phi_\Lambda)=\rho_0+f_\Lambda(\phi_\Lambda)$ in $\Omega$ with the Neumann boundary condition $\frac{\partial\phi_\Lambda}{\partial\nu}=0$ on $\partial\Omega$, we use the $W^{2,p}$ estimate (cf. \cite[Theorem~15.2]{1959agmon}) to get
\[\|\phi_{\Lambda}\|_{W^{2,p}(\Omega)}\leq C\|\rho_0+f_\Lambda(\phi_\Lambda)\|_{L^p(\Omega)}\]
for all $p>1$, where $C$ is a positive constant independent of $\Lambda$.
Then as for Remark~\ref{rmk3} of Section~\ref{sec3.1}, we use Lemma~\ref{lma:4.1} to get the uniform bound estimate of $\phi_\Lambda$ in $W^{2,p}$ norm, and hence there exists a sequence of function $\{\phi_{\Lambda_k}\}_{k=1}^\infty$ such that $\phi_{\Lambda_k}$ converges to $\phi^*$ weakly in $W^{2,p}(\Omega)$.
By Remark~\ref{rmk2} of Proposition~\ref{prop:2.6}, $\phi^*$ satisfies the equation \eqref{eq:1.17} in weak sense.
Let $w_k=\phi_{\Lambda_k}-\phi^*$, $c_{i,k}:=c_{i,\Lambda_k}$, and $f_k:=f_{\Lambda_k}$.
Then by Sobolev embedding, $w_k\in\mathcal{C}^{1,\alpha}(\Omega)$ for $\alpha\in(0,1)$, and $\displaystyle\lim_{k\to\infty}\|w_k\|_{\mathcal{C}^{1,\alpha}(\Omega)}=0$.
Moreover, $w_k$ satisfies
\[
-\nabla\cdot(\epsilon\nabla w_k)=f_k(w_k+\phi^*)-f^*(\phi^*)
\quad\text{in}~\Omega\]
with the boundary condition $\frac{\partial w_k}{\partial\nu}=0$ on $\partial\Omega$.
Using the Schauder's estimate (cf. \cite[Theorem 6.30]{1977gilbarg}) with the mathematical induction, we get
\begin{equation}
\label{eq:4.05}
\begin{aligned}
\|w_k\|_{\mathcal{C}^{m+2,\alpha}(\Omega)}&\leq C\|f_k(w_k+\phi^*)-f^*(\phi^*)\|_{\mathcal{C}^{m,\alpha}(\Omega)}\\&\leq C'\sum_{i=1}^N\left(\|c_{i,k}(w_k+\phi^*)-c_i^*(w_k+\phi^*)\|_{\mathcal{C}^{m,\alpha}(\Omega)}+\|c_i^*(w_k+\phi^*)-c_i^*(\phi^*)\|_{\mathcal{C}^{m,\alpha}(\Omega)}\right),
\end{aligned}
\end{equation}
for $m\in\mathbb N$ and $\alpha\in(0,1)$, where $C$ and $C'$ are positive constants independent of $k$.
By Proposition~\ref{prop:2.6} and induction hypothesis $\displaystyle\lim_{k\to\infty}\|w_k\|_{\mathcal{C}^{m,\alpha}(\Omega)}=0$, we may use \eqref{eq:4.04} to get  $\displaystyle\lim_{k\to\infty}\|w_k\|_{\mathcal{C}^{m+2,\alpha}(\Omega)}=0$, i.e. $\displaystyle\lim_{k\to\infty}\|\phi_{\Lambda_k}-\phi^*\|_{C^{m+2,\alpha}(\Omega)}=0$ for $m\in\mathbb N$ and $\alpha\in (0,1)$.
Therefore, $\phi^*$ is the solution of \eqref{eq:1.17} with the Neumann boundary condition \eqref{eq:1.09}.

To complete the proof of Theorem~\ref{thm:1.2}(ii), we need to prove
\begin{claim}
\label{claim4}
For any $m\in\mathbb N$, we have $\displaystyle\lim_{\Lambda\to\infty}\|\phi_\Lambda-\phi^*\|_{\mathcal{C}^m(\Omega)}=0$.
\end{claim}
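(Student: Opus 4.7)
The plan follows the same template as Claim 1 in the Robin case, adapted to the Neumann setting. The preceding paragraph already shows that from any sequence $\Lambda_k \to \infty$ we can extract a subsequence along which $\phi_{\Lambda_k}$ converges in $\mathcal{C}^{m+2,\alpha}(\Omega)$ to some function that solves equation \eqref{eq:1.17} with the Neumann boundary condition \eqref{eq:1.09}. So what remains is a uniqueness-plus-subsequence argument.

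The first step is to prove uniqueness of the limiting Neumann problem. Suppose $\phi_1^*$ and $\phi_2^*$ are two $\mathcal{C}^2(\overline\Omega)$ solutions of \eqref{eq:1.17} with $\partial_\nu \phi_i^* = 0$ on $\partial\Omega$. Setting $u = \phi_1^* - \phi_2^*$ and subtracting the equations gives
\[
-\nabla\cdot(\epsilon\nabla u) = c(x)\, u \quad \text{in } \Omega, \qquad \frac{\partial u}{\partial\nu} = 0 \quad \text{on } \partial\Omega,
\]
where $c(x)$ is the difference quotient of $f^*$ defined exactly as in the proof of Claim~\ref{claim1}. Proposition~\ref{prop:2.7} gives $c(x) < 0$ throughout $\Omega$. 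Multiplying by $u$, integrating over $\Omega$, and integrating by parts (the boundary term vanishes by the Neumann condition) yields
\[
\int_\Omega \epsilon |\nabla u|^2 \, \mathrm{d}x \;=\; \int_\Omega c(x)\, u^2 \, \mathrm{d}x \;\leq\; 0.
\]
Since $\epsilon > 0$, both sides must vanish, so $\nabla u \equiv 0$, hence $u \equiv C$ for some constant. Plugging back gives $c(x)\, C \equiv 0$, and since $c < 0$ we conclude $C = 0$, i.e.\ $\phi_1^* \equiv \phi_2^*$.

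The second step is the standard subsequence-of-subsequence argument. Suppose, for contradiction, that $\|\phi_\Lambda - \phi^*\|_{\mathcal{C}^m(\Omega)} \not\to 0$. Then there exist $\varepsilon_0 > 0$ and $\Lambda_k \to \infty$ with $\|\phi_{\Lambda_k} - \phi^*\|_{\mathcal{C}^m(\Omega)} \geq \varepsilon_0$ for all $k$. By Lemma~\ref{lma:4.1} the sequence $\phi_{\Lambda_k}$ is uniformly bounded in $L^\infty$, and by Remark~\ref{rmk3} (extended to the Neumann setting via Lemma~\ref{lma:4.1}) the right-hand side $\rho_0 + f_{\Lambda_k}(\phi_{\Lambda_k})$ is uniformly bounded in $L^\infty$, so the $W^{2,p}$ estimate provides uniform bounds in $W^{2,p}(\Omega)$ for every $p > 1$. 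Sobolev embedding and the bootstrap via Schauder's estimate applied as in \eqref{eq:4.05} then let me extract a sub-subsequence $\phi_{\Lambda_{k_j}}$ converging in $\mathcal{C}^{m+2,\alpha}(\Omega)$ to some $\tilde\phi^*$ solving \eqref{eq:1.17} with \eqref{eq:1.09}. By the uniqueness shown above, $\tilde\phi^* = \phi^*$, contradicting $\|\phi_{\Lambda_{k_j}} - \phi^*\|_{\mathcal{C}^m(\Omega)} \geq \varepsilon_0$. This forces full convergence and proves Claim~\ref{claim4}.

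The only subtle point is the uniqueness argument, since in the Neumann case one cannot directly mimic the Robin maximum-principle route used in Claim~\ref{claim1}; the correct substitute is the energy identity above, whose success rests crucially on the strict monotonicity of $f^*$ (Proposition~\ref{prop:2.7}) forcing $c(x) < 0$. Everything else is routine compactness-plus-uniqueness packaging.
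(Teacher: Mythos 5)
Your proof is correct, and the uniqueness step takes a genuinely different route from the paper's. The paper proves $u \equiv 0$ by the strong maximum principle combined with Hopf's lemma: $c<0$ rules out a nonzero constant, the strong maximum principle pushes the extrema of $u$ to $\partial\Omega$, and Hopf's lemma then produces a nonzero normal derivative contradicting $\partial_\nu u = 0$. You instead test the equation against $u$ and use the sign condition $c<0$ together with $\epsilon>0$ in the resulting energy identity; the boundary term drops because of the Neumann condition. Both arguments are valid. The energy route is the more elementary of the two (no Hopf's lemma needed, and it matches the technique the paper already uses for uniqueness of $\phi_\Lambda$ in Theorem~\ref{thm:1.2}(i) and in Appendix~I), whereas the paper's choice of maximum principle plus Hopf is stylistically consistent with its Claims~\ref{claim2} and~\ref{claim3}, where Hopf's lemma is already in play. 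Note also that from $\int_\Omega c(x)u^2\,\mathrm{d}x = 0$ with $c<0$ pointwise you can conclude $u\equiv 0$ directly, without the intermediate step through $\nabla u \equiv 0$; but your longer route is equally valid. One small overstatement in your closing remark: it is not that the maximum-principle route cannot be made to work in the Neumann case — the paper does exactly that via Hopf's lemma — only that the Robin boundary identity $u = -\eta\,\partial_\nu u$ used in Claim~\ref{claim1} is unavailable, and a different ingredient (Hopf, or in your case an energy identity) must replace it.
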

\begin{proof}
Suppose that there exist sequences $\{\Lambda_k\}$ and $\{\tilde{\Lambda}_k\}$ tending to infinity such that sequences $\{\phi_{\Lambda_k}\}$ and $\{\phi_{\tilde{\Lambda}_k}\}$ have limits $\phi_1^*$ and $\phi_2^*$, respectively.
It is clear that $\phi_1^*$ and $\phi_2^*$ satisfy the equation \eqref{eq:1.17} with the Neumann boundary condition \eqref{eq:1.09}.
Now we want to prove that $\phi_1^*\equiv\phi_2^*$.
Let $u=\phi_1^*-\phi_2^*$. Subtracting the equation \eqref{eq:1.17} with $\phi^*=\phi_2^*$ from that with $\phi^*=\phi_1^*$, we obtain
\[-\nabla\cdot(\epsilon\nabla u)=f^*(\phi_1^*)-f^*(\phi_2^*)=c(x)u\quad\text{in}~\Omega,\]where function $c$ is defined by
\[
c(x)=\begin{cases}\displaystyle\frac{f^*(\phi_1^*(x))-f^*(\phi_2^*(x))}{\phi_1^*(x)-\phi_2^*(x)},&\text{if}~\phi_1^*(x)\neq\phi_2^*(x);\\\displaystyle\frac{\mathrm{d}f^*}{\mathrm{d}\phi}(\phi_1^*(x)),&\text{if}~\phi_1^*(x)=\phi_2^*(x).\end{cases}
\]
By Proposition~\ref{prop:2.7}, we have $c<0$ in $\Omega$.
Here we have used the fact that $\frac{f(\alpha)-f(\beta)}{\alpha-\beta}<0$ for $\alpha\neq\beta$ if $f$ is strictly decreasing on $\mathbb R$.
Since $\nabla\cdot(\epsilon\nabla u)+c(x)u=0$ with $c<0$ in $\Omega$, then it is obvious that $u$ cannot be a nonzero constant.
Then by the strong maximum principle, we have that $u$ attains its nonnegative maximum value and nonnpositive minimum value at the boundary point.
Suppose $u$ has positive maximum value which is attained at $x^*\in\partial\Omega$.
Then by Hopf's lemma, we get $\frac{\partial u}{\partial\nu}(x^*)>0$, which contradicts with the Neumann boundary condition.
Thus $u\leq0$ in $\Omega$.
Similarly, $u\geq 0$ in $\Omega$. Therefore, $u\equiv0$, i.e. $\phi^*_1\equiv\phi^*_2$ and we complete the proof of Claim~\ref{claim4} and Theorem~\ref{thm:1.2}(ii).\end{proof}

\section{Numerical results}
\label{sec5}

Here we present numerical results to support Theorems~\ref{thm:1.1} and \ref{thm:1.2}.
Throughout this section, we assume the one-dimensional domain $\Omega=(-1,1)$, $N=3$, $z_0=0$, $z_1=1$, $z_2=-1$, $z_3=2$, $\eta=\epsilon=0.1$, and $\bar{\mu}_i=\tilde{\mu}_0=1$ for all $i=0,1,\dots,N$.
By the method of \cite{2015elbaghdady}, we employ the Legendre--Gauss--Lobatto (LGL) points $\{x_k\}_{k=0}^L$ as the partition of the interval $[-1,1]$ to discrtize equations \eqref{eq:1.11} and \eqref{eq:1.12} as the following algebraic equations.
\begin{align}
\label{eq:5.01}&\ln c_{0,\Lambda}(x_k)+\Lambda\lambda_0\sum_{j=0}^N\lambda_j(c_{0,\Lambda}(x_k))^{\lambda_j/\lambda_0}\exp(\bar{\mu}_j-z_j\phi_\Lambda(x_k))=\mu_0,\quad\text{for}~k=0,\dots,L,\\
&\label{eq:5.02}
-\epsilon\text{D}_{L+1}^2\boldsymbol\phi_\Lambda=\boldsymbol\rho_0+\sum_{i=1}^Nz_i{\bf c}_{0,\Lambda}^{\lambda_i/\lambda_0}\circ\exp(\bar{\mu}_i-z_i\boldsymbol\phi_\Lambda),
\end{align}
where $\boldsymbol\rho_0=[\rho_0(x_0),\dots,\rho_0(x_L)]^{\mathsf{T}}$, ${\bf c}_{0,\Lambda}=[c_{0,\Lambda}(x_0),\dots,c_{0,\Lambda}(x_L)]^{\mathsf{T}}$, $\boldsymbol\phi_\Lambda=[\phi_\Lambda(x_0),\dots,\phi_\Lambda(x_L)]^{\mathsf{T}}$ as column vectors, $\circ$ is the Hadamard product, and $D_{L+1}=[d_{ij}]_{0\leq i,j\leq L}$ is the differentiation matrix which satisfies $D_{L+1}\boldsymbol\phi_\Lambda=[\phi_\Lambda'(x_0),\dots,\phi_\Lambda'(x_L)]^{\mathsf{T}}$.
Besides, the Robin boundary condition \eqref{eq:1.08} and the Neumann boundary condition \eqref{eq:1.09} are also discretized as
\begin{align}
\label{eq:5.03}
&(\text{Robin})\qquad~\phi(x_0)-\eta\sum_{k=0}^Ld_{0k}\phi(x_k)=\phi_{bd}\left(-1\right),\quad\phi(x_L)+\eta\sum_{k=0}^Ld_{Lk}\phi(x_k)=\phi_{bd}\left(1\right),
\\
\label{eq:5.04}
&(\text{Neumann})\quad\sum_{k=0}^Ld_{0k}\phi(x_k)=0,\qquad\sum_{k=0}^Ld_{Lk}\phi(x_k)=0.
\end{align}
For the Robin and Neumann boundary conditions, we replace the first and last equations of \eqref{eq:5.02} by equations \eqref{eq:5.03} and\eqref{eq:5.04}, respectively.
As for \eqref{eq:5.01} and \eqref{eq:5.02}, we discrtize the equations \eqref{eq:1.15} and \eqref{eq:1.17} as follows.
\begin{align}
\label{eq:5.05}
&\sum_{i=0}^N\lambda_i(c_0^*(x_k))^{\lambda_i/\lambda_0}\exp\left(\bar{\mu}_i-z_i\phi^*(x_k)\right)=\tilde{\mu}_0,\quad\text{for}~k=0,\dots,L,\\&
\label{eq:5.06}
-\epsilon\,\text{D}_{L+1}^2\boldsymbol{\phi}^*=\boldsymbol\rho_0+\sum_{i=1}^Nz_i({\bf c}_0^*)^{\lambda_i/\lambda_0}\circ\exp\left(\bar{\mu}_i-z_i\boldsymbol{\phi}^*\right),
\end{align}
where ${\bf c}_0^*=[c_0^*(x_0),\dots,c_0^*(x_L)]^{\mathsf{T}}$ and $\boldsymbol\phi^*=[\phi^*(x_0),\dots,\phi^*(x_L)]^{\mathsf{T}}$.
Similarly, for the Robin and Neumann boundary conditions, we replace the first and last equations of \eqref{eq:5.06} by equations \eqref{eq:5.03} and \eqref{eq:5.04}, respectively.

\begin{table}[!t]
\centering
\begin{tabular}{c|c|c}
$\Lambda$&(a) $\lambda_i=1$ for all $i$&(b) $\lambda_1=2$, $\lambda_2=1.5$, $\lambda_3=1$\\\hline
$1$&$5.9950$e-$01$&$6.4144$e-$01$\\
$10$&$1.4501$e-$01$&$1.3755$e-$01$\\
$10^2$&$1.7715$e-$02$&$1.5933$e-$02$\\
$10^3$&$1.8144$e-$03$&$1.6200$e-$03$\\
$10^4$&$1.8189$e-$04$&$1.6227$e-$04$\\
$10^5$&$1.8193$e-$05$&$1.6230$e-$05$\\
\end{tabular}
\caption{The maximum norms $\left\|\phi_\Lambda-\phi^*\right\|_\infty$ with the Robin boundary condition \eqref{eq:5.03}, where $\eta=0.1$, $\phi_{bd}\left(-1\right)=-2$ and $\phi_{bd}\left(1\right)=3$.}
\label{table1}

~

\begin{tabular}{c|c|c}
$\Lambda$&(a) $\lambda_i=1$ for all $i$&(b) $\lambda_1=2$, $\lambda_2=1.5$, $\lambda_3=1$\\\hline
$1$&$6.2243$e-$01$&$6.4429$e-$01$\\
$10$&$1.4654$e-$01$&$1.3285$e-$01$\\
$10^2$&$1.7727$e-$02$&$1.5170$e-$02$\\
$10^3$&$1.8133$e-$03$&$1.5393$e-$03$\\
$10^4$&$1.8175$e-$04$&$1.5416$e-$04$\\
$10^5$&$1.8179$e-$05$&$1.5419$e-$05$\\
\end{tabular}
\caption{The maximum norms $\left\|\phi_\Lambda-\phi^*\right\|_\infty$ with the Dirichlet boundary condition \eqref{eq:5.03}, where $\eta=0$, $\phi_{bd}\left(-1\right)=-2$ and $\phi_{bd}\left(1\right)=3$.}
\label{table2}

~

\begin{tabular}{c|c|c}
$\Lambda$&(a) $\lambda_i=1$ for all $i$&(b) $\lambda_1=2$, $\lambda_2=1.5$, $\lambda_3=1$\\\hline
$1$&$3.0027$e-$01$&$5.3706$e-$01$\\
$10$&$9.4618$e-$02$&$1.6046$e-$01$\\
$10^2$&$1.2565$e-$02$&$2.0737$e-$02$\\
$10^3$&$1.3010$e-$03$&$2.1385$e-$03$\\
$10^4$&$1.3057$e-$04$&$2.1452$e-$04$\\
$10^5$&$1.3062$e-$05$&$2.1459$e-$05$\\
\end{tabular}
\caption{The maximum norms $\left\|\phi_\Lambda-\phi^*\right\|_\infty$ with the Neumann boundary condition \eqref{eq:5.04}, and $\rho_0(x)=x^3$.}
\label{table3}
\end{table}
Using the command {\ttfamily fsolve} in Matlab, we numerically solve the equations \eqref{eq:5.01}--\eqref{eq:5.06} and have the following results.
For various $\Lambda$ with $\rho_0=0$ and the Robin boundary condition with $\eta=0.1$, the profiles of $\phi_\Lambda$ and $\phi^*$ are represented in Figure~\ref{fig2}, and the maximum norms $\|\phi_\Lambda-\phi^*\|_\infty$ are listed in Table~\ref{table1}.
For various $\Lambda$ with $\rho_0=0$ and the Dirichlet boundary condition (the Robin boundary condition with $\eta=0$), the profiles of $\phi_\Lambda$ and $\phi^*$ are sketched in Figure~\ref{fig3}, and the maximum norms $\|\phi_\Lambda-\phi^*\|_\infty$ are shown in Table~\ref{table2}.
These results are consistent with Theorem~\ref{thm:1.1}.
In addition, for the Neumann boundary condition, we choose $\rho_0(x)=x^3$ to fulfill the constraint $\int_{-1}^1\rho_0\mathrm{d}x=0$. For various $\Lambda$, the profiles of $\phi_\Lambda$ and $\phi^*$ are presented in Figure~\ref{fig4} and the maximum norm $\left\|\phi_\Lambda-\phi^*\right\|_\infty$ are expressed in Table~\ref{table3} which support Theorem~\ref{thm:1.2}.
\begin{figure}[!h]
\includegraphics[scale=0.68]{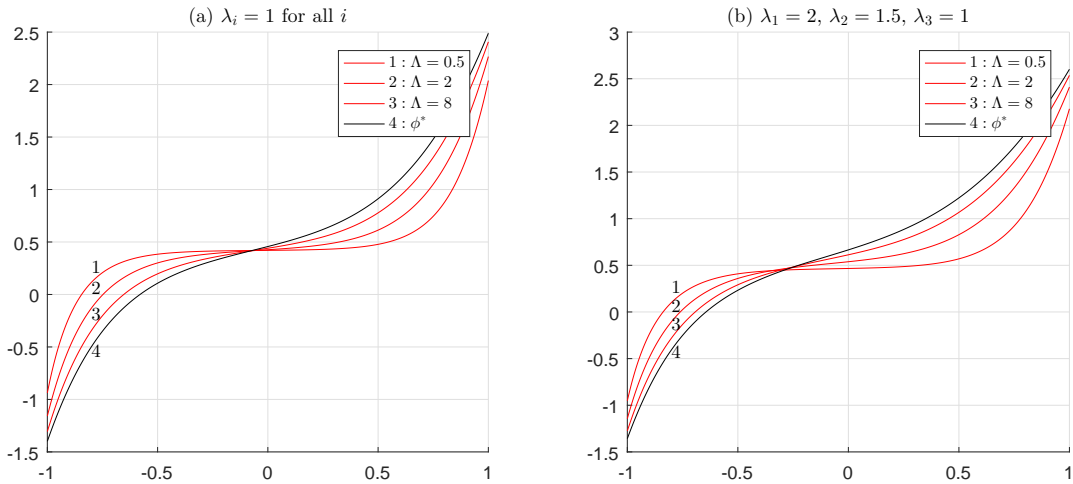}
\caption{The numerical profiles of $\phi_\Lambda$ and $\phi^*$ with the Robin boundary condition, where the curves 1--3 are the profiles of $\phi_\Lambda$ with $\Lambda=0.5,2,8$ and the curve 4 is the profile of $\phi^*$.
In Figure~\ref{fig2} (a), $\lambda_i=1$ for all $i$. In Figure~\ref{fig2} (b),  $\lambda_1=2$, $\lambda_2=1.5$, $\lambda_3=1$.}
\label{fig2}
\end{figure}

\begin{figure}
\centering
\includegraphics[scale=0.6]{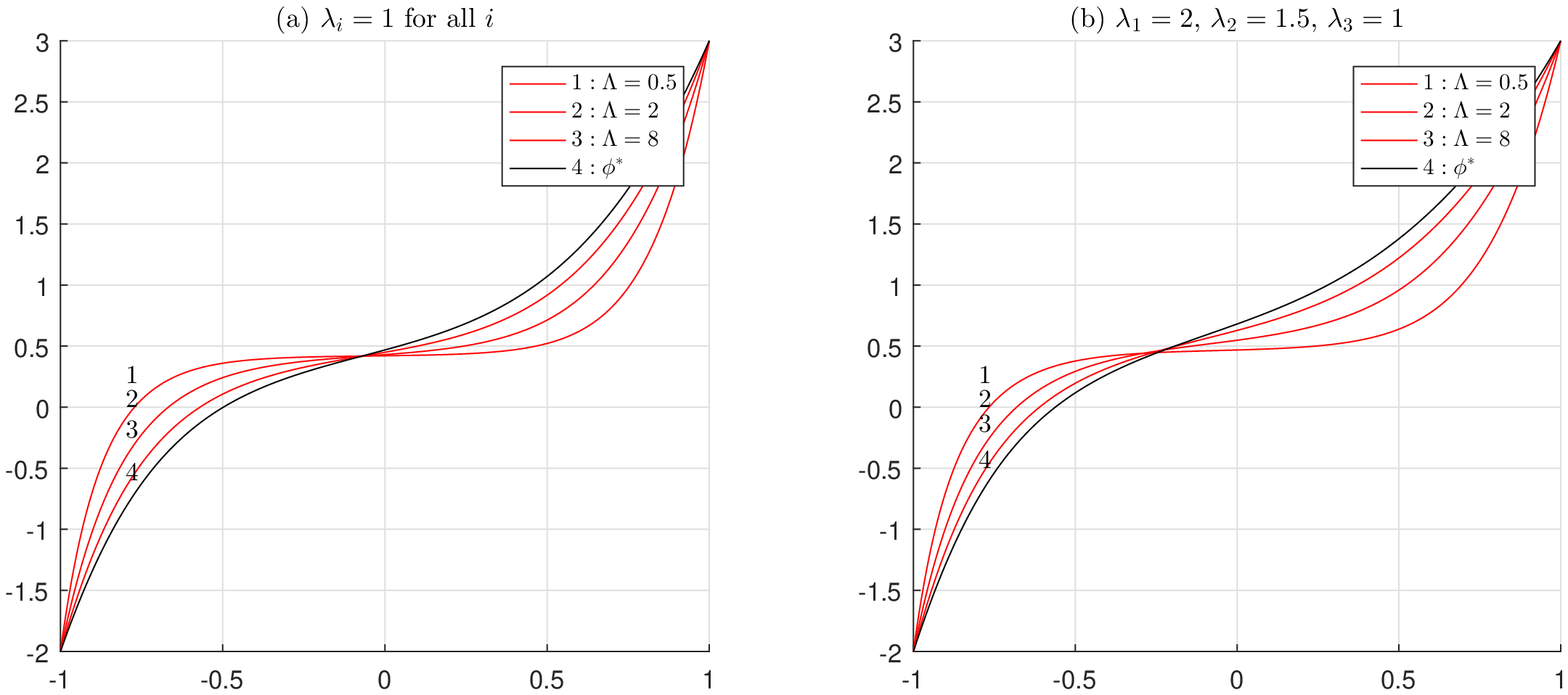}
\caption{The numerical profiles of $\phi_\Lambda$ and $\phi^*$ with the Dirichlet boundary condition, where curves 1--3 are the profiles of $\phi_\Lambda$ with $\Lambda=0.5,2,8$, and curve 4 is the profile of $\phi^*$.}
\label{fig3}
\includegraphics[scale=0.6]{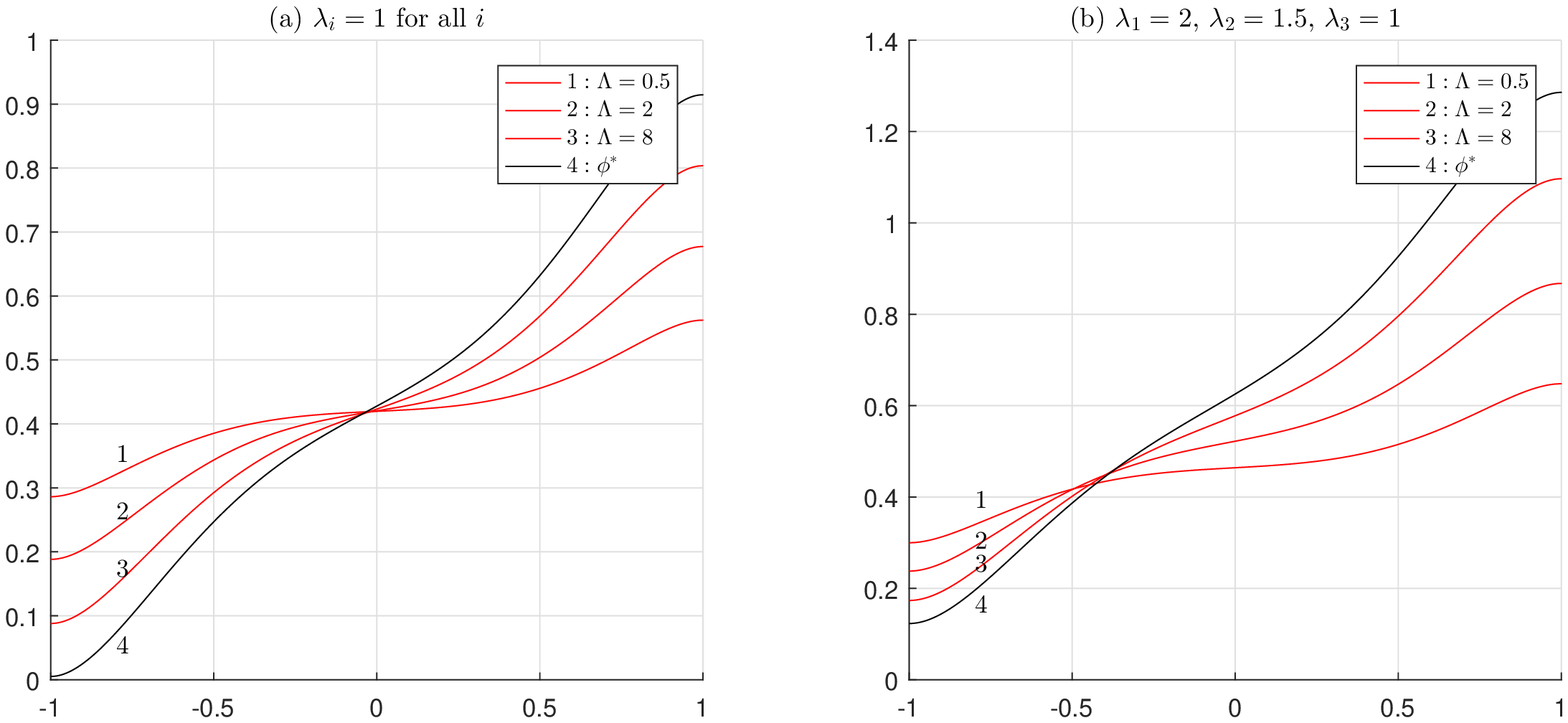}
\caption{The numerical profiles of $\phi_\Lambda$ and $\phi^*$ with the Neumann boundary condition, where curves 1--3 are the profiles of $\phi_\Lambda$ with $\Lambda=0.5,2,8$, and curve 4 is the profile of $\phi^*$.}
\label{fig4}
\end{figure}

\newpage

\noindent{\large\bf Final Remark. (Poisson--Boltzmann approach)}

Instead of steady state PNP-steric equations, we provide another approach to obtain equations \eqref{eq:1.06} and \eqref{eq:1.07} which can be derived by the following energy functional.
\[E[c_0,c_1,\dots,c_N,\phi]=E_{PB}[c_0,c_1,\dots,c_N,\phi]+E_{LJ}[c_0,c_1,\dots,c_N],\]
where
\[\begin{aligned}
&E_{PB}[c_0,c_1,\cdots,c_N,\phi]=\int_\Omega\left[-\frac12\epsilon\left|\nabla\phi\right|^2+\sum_{i=0}^Nc_i(\ln c_i-1)+\left(\rho_0+\sum_{i=1}^Nz_ic_i\right)\phi\right]\mathrm{d}x,\\
&E_{LJ}[c_0,c_1,\dots,c_N]=\frac12\sum_{i,j=0}^Ng_{ij}\int_\Omega c_i(x)c_j(x)\mathrm{d}x.\end{aligned}\]
Notice that $E_{PB}$ is the energy functional of conventional Poisson--Boltzmann equation with the form $-\nabla\cdot(\epsilon\nabla\phi)=\rho_0+\sum_{i=1}^Nz_ie^{\mu_i-z_i\phi}$ which can be obtained by $\delta E_{PB}/\delta\phi=0$ and $\delta E_{PB}/\delta c_i=\mu_i$ for $i=0,1,\dots,N$.
Besides, $E_{LJ}$ is the energy functional of the approximate Lennard--Jones potentials (cf. \cite{2014lin}), and one may derive the equations \eqref{eq:1.06} and \eqref{eq:1.07} by $\delta E/\delta\phi=0$ and $\delta E/\delta c_i=\mu_i$ for $i=0,1,\dots,N$.

~

\noindent{\large\bf Conclusion}

Modified Poisson-Boltzmann (mPB) equations play an important role to understand the steric effects of ion and solvent molecules.
To get such equations, we study steady state Poisson--Nernst--Planck equations with steric effects (PNP-steric equatons) and derive the Poisson--Boltzmann equation with steric effects (PB-steric equation) by the assumptions of steric effects.
Under the Robin (or Neumann) boundary condition, the PB-steric equation has a unique solution $\phi_\Lambda$, a parameter $\Lambda$, and positive constants $\lambda_i$'s which depend on the radii of ions and solvent moleclues.
As $\Lambda$ goes to infinity, the limiting equation of PB-steric equation becomes a mPB equation.
We firstly use the implicit function theorem on Banach spaces to prove the convergence of nonlinear term of PB-steric equation as $\Lambda$ tends to infinity.
Then we prove the uniform bound estimate of solution $\phi_\Lambda$ and obtain its convergence.
Numerical simulations are provided to support theoretical results. Further work is needed on the application of PB-steric equation.

~

\noindent{\bf Acknowledgement}

The research of T.C. Lin is partially supported by the National Center for Theoretical Sciences (NCTS) and MOST grant 109-2115-M-002 -003 of Taiwan. 

\section{Appendix I. Existence and uniqueness of \texorpdfstring{$\phi_\Lambda$}{}}

In this section, we fix $\Lambda>0$ and prove the existence and uniqueness of solution $\phi_\Lambda$ of equation \eqref{eq:1.12} with the Robin boundary condition and Neumann boundary condition \eqref{eq:1.08} and \eqref{eq:1.09}, respectively.
For the existence of solution $\phi_\Lambda$, we study the following energy minimization problems (N) and (R) for the Neumann and Robin boundary conditions, respectively.

\begin{itemize}
\item[(N)] Minimize $E_{eq}[\phi]$ subject to $\phi\in H^1(\Omega)$,
\item[(R)] Minimize $E[\phi]:=E_{eq}[\phi]+B_\eta[\phi]$ subject to $\phi\in\mathbb H_\eta(\Omega)$,
\end{itemize}
where the functionals
\[E_{eq}[\phi]=\frac12\int_\Omega\epsilon|\nabla\phi|^2\,\mathrm{d}x-\int_\Omega\rho_0\phi\,\mathrm{d}x-\int_\Omega F_\Lambda(\phi)\,\mathrm{d}x,\qquad B_\eta[\phi]=\begin{cases}
\frac1{2\eta}\int_{\partial\Omega}\epsilon(\phi-\phi_{bd})^2\,\mathrm{d}S_x&\text{if}~\eta>0,\\0&\text{if}~\eta=0,\end{cases}\]
and defined on the space
\[\mathbb{H}_\eta(\Omega):=\begin{cases}H^1(\Omega)&\text{if}~\eta>0,\\\{u\in H^1(\Omega):u-\phi_{bd}\in H_0^1(\Omega)\}&\text{if}~\eta=0.\end{cases}\]
Here the function $F_\Lambda(\phi)=\int_0^{\phi}f_\Lambda(s)\,\mathrm{d}s$, and the function $f_\Lambda=f_\Lambda(\phi)$ is defined in Proposition~\ref{prop:2.2} which gives $F_\Lambda''(\phi)=f_\Lambda'(\phi)<0$ for $\phi\in\mathbb R$.
Hence we may apply the Direct method (cf. \cite{2008struwe}) to solve problems (N) and (R).
The argument of problem (N) is similar to that of problem (R) so we omit here and only provide the argument of problem (R) in the rest of this section.

To apply the Direct method on problem (R), we need the following lemma.

\begin{lma}
\label{lma:6.1}
Functional $E$ is coercive on $H^1(\Omega)$ for $\eta>0$.
\end{lma}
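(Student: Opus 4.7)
The plan is to bound $E[\phi]$ from below by a coercive quadratic expression in $\|\phi\|_{H^1(\Omega)}$. Write $\epsilon_0 = \min_{\overline{\Omega}} \epsilon > 0$, which is positive since $\epsilon \in \mathcal{C}^{\infty}(\overline{\Omega})$ is positive. The argument rests on three ingredients: the concavity of $F_\Lambda$, a trace-type Poincar\'e inequality, and elementary Cauchy--Schwarz/Young estimates.

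First I would handle the nonlinear term $-\int_\Omega F_\Lambda(\phi)\,\mathrm{d}x$. Since $f_\Lambda = F_\Lambda'$ is strictly decreasing on $\mathbb R$ by Proposition~\ref{prop:2.2}, the function $F_\Lambda$ is concave, hence dominated by its tangent line at the origin:
\[
F_\Lambda(\phi) \leq F_\Lambda(0) + f_\Lambda(0)\,\phi \quad\text{for all}~\phi\in\mathbb R.
\]
Integrating gives $-\int_\Omega F_\Lambda(\phi)\,\mathrm{d}x \geq -F_\Lambda(0)\,|\Omega| - f_\Lambda(0) \int_\Omega \phi\,\mathrm{d}x$, so this term contributes at worst a linear expression in $\|\phi\|_{L^2(\Omega)}$ (after Cauchy--Schwarz). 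The term $-\int_\Omega \rho_0 \phi\,\mathrm{d}x$ is similarly bounded by $\|\rho_0\|_{L^2}\|\phi\|_{L^2}$.

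Next I would combine the two quadratic terms. Using the elementary bound $(\phi-\phi_{bd})^2 \geq \tfrac{1}{2}\phi^2 - \phi_{bd}^2$ pointwise on $\partial\Omega$ and the positivity of $\epsilon$, one obtains
\[
E[\phi] \geq \frac{\epsilon_0}{2}\,\|\nabla\phi\|_{L^2(\Omega)}^2 + \frac{\epsilon_0}{4\eta}\,\|\phi\|_{L^2(\partial\Omega)}^2 - C_1\bigl(\|\phi\|_{L^2(\Omega)} + 1\bigr),
\]
where $C_1$ depends on $\rho_0$, $\phi_{bd}$, $\Lambda$, $\eta$, $\epsilon_0$ but not on $\phi$. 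The key step is now the trace-type Poincar\'e inequality
\[
\|u\|_{H^1(\Omega)}^2 \leq C_0 \bigl( \|\nabla u\|_{L^2(\Omega)}^2 + \|u\|_{L^2(\partial\Omega)}^2 \bigr) \quad\text{for all}~u\in H^1(\Omega),
\]
which holds on any bounded Lipschitz domain and is proved by a standard compactness-contradiction argument. Applying this with $u=\phi$ and absorbing into the quadratic terms yields the lower bound
\[
E[\phi] \geq c\,\|\phi\|_{H^1(\Omega)}^2 - C_1\|\phi\|_{H^1(\Omega)} - C_2
\]
for some constant $c>0$ independent of $\phi$, from which coerciveness $E[\phi] \to \infty$ as $\|\phi\|_{H^1(\Omega)}\to\infty$ follows immediately.

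The main obstacle is conceptual rather than technical: one has to realize that the boundary term from $B_\eta$ (which is what distinguishes the Robin case $\eta>0$ from pure Neumann) is exactly the ingredient needed to replace the missing $L^2$-control on $\phi$ that a Dirichlet condition would provide, and that the trace-Poincar\'e inequality packages this replacement cleanly. Once the linear estimates on the $\rho_0$ and $F_\Lambda$ terms are in place, the rest is routine Young's inequality bookkeeping.
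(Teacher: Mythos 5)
Your proof is correct and follows essentially the same route as the paper: Young's inequality to extract $\tfrac{\epsilon_0}{4\eta}\|\phi\|_{L^2(\partial\Omega)}^2$ from $B_\eta$, a trace-type Poincar\'e (Friedrichs) inequality to convert $\|\nabla\phi\|_{L^2(\Omega)}^2 + \|\phi\|_{L^2(\partial\Omega)}^2$ into full $H^1$ control, and Cauchy--Schwarz on the lower-order terms. The only minor variation is your treatment of $-\int_\Omega F_\Lambda(\phi)$: you bound $F_\Lambda$ above by its tangent line at $0$ (using only concavity from Proposition~\ref{prop:2.2}, giving a linear term), whereas the paper bounds it by its absolute maximum $M_F$ (using Propositions~\ref{prop:2.2} and~\ref{prop:2.4}, giving a constant); both are absorbed by the quadratic term and lead to the same conclusion.
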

\begin{proof}
By Young's inequality, we have
\begin{equation}
\label{eq:6.01}
B_{\eta}\left[\phi\right]\geq\frac1{4\eta}\int_{\partial\Omega}\epsilon\phi^2\,\mathrm{d}S_x-\frac1{2\eta}\int_{\partial\Omega}\epsilon\phi_{bd}^2\,\mathrm{d}S_x.
\end{equation}
By Propositions~\ref{prop:2.2} and \ref{prop:2.4}, function $F_\Lambda$ is strictly concave and has an absolute maximum denoted by $M_F$.
Hence by \eqref{eq:6.01}, we obtain
\begin{equation}
\label{eq:6.02}
E[\phi]\geq C_\eta\left(\int_{\Omega}\left|\nabla\phi\right|^2\mathrm{d}x+\int_{\partial\Omega}\phi^2\,\mathrm{d}S_x\right)-\int_\Omega|\rho_0\phi|\,\mathrm{d}x-\frac1{2\eta}\int_{\partial\Omega}\epsilon\phi_{bd}^2\,\mathrm{d}S_x-M_F\left|\Omega\right|,
\end{equation}
where $C_{\eta}=\frac1{4\eta}\min\left\{1,2\eta\right\}\min_{\overline{\Omega}}\epsilon>0$ and $\left|\Omega\right|$ denotes the Lebesuge measure of $\Omega$.
Moreover, for any $\phi\in H^1(\Omega)$, Friedrichs' inequality gives
\begin{equation}
\label{eq:6.03}
\int_{\Omega}\left|\nabla\phi\right|^2\mathrm{d}x+\int_{\partial\Omega}\phi^2\,\mathrm{d}S_x\geq C_1\int_{\Omega}\phi^2\mathrm{d}x,\end{equation}
where $C_1$ is a positive constant depending only on the dimension $d$ and the measures of $\Omega$ and $\partial\Omega$.
Besides, Cauchy--Schwarz inequality gives
\begin{equation}
\label{eq:6.04}
\int_\Omega\left|\rho_0\phi\right|\,\mathrm{d}x\leq\left(\int_\Omega\rho_0^2\,\mathrm{d}x\right)^{1/2}\left(\int_\Omega\phi^2\,\mathrm{d}x\right)^{1/2}
\end{equation}
and by \eqref{eq:6.02}--\eqref{eq:6.04}, Lemma~\ref{lma:6.1} follows.
\end{proof}

By Lemma~\ref{lma:6.1}, we may prove the existence of minimizer of problem (R) as follows.
\begin{prop}
\label{prop:6.2}
Functional $E$ has a minimizer $\phi\in\mathbb{H}_\eta$ for any $\eta\geq0$.
\end{prop}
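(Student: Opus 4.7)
The plan is to apply the direct method of the calculus of variations to the minimization problem (R), splitting into the cases $\eta>0$ and $\eta=0$, and then invoking strict convexity for uniqueness.

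First I would verify that $E$ is bounded below on $\mathbb H_\eta(\Omega)$ and coercive. For $\eta>0$, this is exactly the content of Lemma~\ref{lma:6.1}. For $\eta=0$, the admissible set is $\{u\in H^1(\Omega):u-\phi_{bd}\in H_0^1(\Omega)\}$, so writing $\phi=\psi+\phi_{bd}$ with $\psi\in H_0^1(\Omega)$ and applying Poincar\'e's inequality on $H_0^1(\Omega)$ gives coercivity in $\|\psi\|_{H^1}$, hence in $\|\phi\|_{H^1}$. In both cases the crucial fact is that $F_\Lambda$ is concave (since $F_\Lambda''=f_\Lambda'<0$ by Proposition~\ref{prop:2.2}) and achieves an absolute maximum $M_F<\infty$ because $\lim_{\phi\to\pm\infty}f_\Lambda(\phi)=\mp\infty$ by Proposition~\ref{prop:2.4}, so $-\int_\Omega F_\Lambda(\phi)\,\mathrm dx\geq-M_F|\Omega|$ and does not destroy coercivity.

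Next I would take a minimizing sequence $\{\phi_n\}\subset\mathbb H_\eta(\Omega)$ with $E[\phi_n]\to\inf_{\mathbb H_\eta}E$. Coercivity yields a uniform bound $\|\phi_n\|_{H^1(\Omega)}\leq C$, so after extracting a subsequence $\phi_n\rightharpoonup\phi$ weakly in $H^1(\Omega)$. The Rellich--Kondrachov compactness theorem gives $\phi_n\to\phi$ strongly in $L^2(\Omega)$, the trace operator is compact from $H^1(\Omega)$ to $L^2(\partial\Omega)$, so $\phi_n|_{\partial\Omega}\to\phi|_{\partial\Omega}$ in $L^2(\partial\Omega)$, and a further subsequence converges a.e.\ on $\Omega$. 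For $\eta=0$, since $H_0^1(\Omega)$ is weakly closed in $H^1(\Omega)$, the admissibility constraint $\phi-\phi_{bd}\in H_0^1(\Omega)$ passes to the limit.

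Then I would check weak lower semicontinuity term by term. The Dirichlet term $\tfrac12\int_\Omega\epsilon|\nabla\phi|^2\,\mathrm dx$ is convex and continuous on $H^1(\Omega)$, hence weakly lower semicontinuous; the same applies to the boundary penalty $B_\eta$. The linear term $-\int_\Omega\rho_0\phi\,\mathrm dx$ is continuous under strong $L^2$-convergence. For the concave term, since $F_\Lambda\leq M_F$, applying Fatou's lemma to the nonnegative integrands $M_F-F_\Lambda(\phi_n)$ (using a.e.\ convergence and continuity of $F_\Lambda$) gives
\[\int_\Omega(M_F-F_\Lambda(\phi))\,\mathrm dx\leq\liminf_{n\to\infty}\int_\Omega(M_F-F_\Lambda(\phi_n))\,\mathrm dx,\]
equivalently $\liminf_n\bigl(-\int_\Omega F_\Lambda(\phi_n)\,\mathrm dx\bigr)\geq-\int_\Omega F_\Lambda(\phi)\,\mathrm dx$. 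Adding these, $E[\phi]\leq\liminf_n E[\phi_n]=\inf_{\mathbb H_\eta}E$, so $\phi$ is a minimizer.

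The most delicate step is ensuring the concave term behaves properly: since $-F_\Lambda$ is convex but $F_\Lambda$ could in principle be very negative for large $|\phi|$, one might worry about domination. The boundedness $F_\Lambda\leq M_F$ is precisely what makes Fatou applicable in the correct direction and is the key input from Propositions~\ref{prop:2.2} and \ref{prop:2.4}. Finally, uniqueness follows at once: $\int_\Omega\epsilon|\nabla\phi|^2\,\mathrm dx$ and $B_\eta$ are convex, $-\int_\Omega F_\Lambda(\phi)\,\mathrm dx$ is \emph{strictly} convex in $\phi$ because $-F_\Lambda''=-f_\Lambda'>0$ on $\mathbb R$, and the linear term is affine; hence $E$ is strictly convex on $\mathbb H_\eta(\Omega)$, and two minimizers would violate strict convexity.
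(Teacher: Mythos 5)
Your proposal follows essentially the same route as the paper's proof: coercivity (Lemma~\ref{lma:6.1}) gives a bounded minimizing sequence, weak $H^1$ compactness gives a weak limit, the quadratic Dirichlet and boundary terms are handled by weak lower semicontinuity, and the concave nonlocal term $-\int_\Omega F_\Lambda(\phi)\,\mathrm{d}x$ is handled by Fatou's lemma applied after adding the constant $M_F$ so that the integrand is nonnegative. The small variations are harmless and in places slightly cleaner than the paper's (you use compactness of the trace operator to get strong $L^2(\partial\Omega)$ convergence where the paper uses weak $L^2(\partial\Omega)$ convergence plus weak lower semicontinuity of the shifted $L^2$-norm; you spell out the $\eta=0$ coercivity via the shift $\phi=\psi+\phi_{bd}$ and Poincar\'e where the paper just says ``similar argument''; and you explicitly flag the domination $F_\Lambda\le M_F$ as the justification for Fatou, which the paper leaves implicit since it set up $M_F$ in Lemma~\ref{lma:6.1}). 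Your closing remark on uniqueness via strict convexity of $E$ is correct but goes beyond the statement of the proposition, which only asserts existence; the paper proves uniqueness separately at the PDE level using monotonicity of $f_\Lambda$ rather than strict convexity of the energy, so that is a genuinely alternative (and arguably more streamlined) argument for the later uniqueness claim, though not needed here.
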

\begin{proof}
Suppose $\eta>0$ and $\mathbb H_\eta(\Omega)=H^1(\Omega)$.
By \eqref{eq:6.02}--\eqref{eq:6.04}, $\displaystyle\inf_{\phi\in H^1(\Omega)}E[\phi]$ exists so there exists a minimizing sequence $\{\phi_n\}_{n=1}^{\infty}\subsetneq H^1(\Omega)$ such that
\[\lim_{n\to\infty}E[\phi_n]=\inf_{\phi\in H^1(\Omega)}E[\phi]:=m_E.\]
Due to the coerciveness of $E$ (see Lemma~\ref{lma:6.1}), we have $\displaystyle\sup_{n\in\mathbb{N}}\left\|\phi_n\right\|_{H^1(\Omega)}<\infty$.
Along with \eqref{eq:6.02}, we can get $\displaystyle\sup_{n\in\mathbb{N}}\left\|\phi_n\right\|_{L^2(\partial\Omega)}<\infty$.
Thus, there exists a subsequence $\{\phi_{n_k}\}$ of $\{\phi_n\}$ and $\phi\in H^1(\Omega)$ such that $\phi_{n_k}\rightharpoonup\phi$ weakly in $H^1(\Omega)$ and $\phi_{n_k}\rightharpoonup\Gamma\phi$ weakly in $L^2(\partial\Omega)$ as $k\to\infty$, where $\Gamma\phi$ is the trace of $\phi$ on $\partial\Omega$.
Since $\phi_{n_k}\rightharpoonup\phi$ weakly in $H^1(\Omega)$ implies $\nabla\phi_{n_k}\rightharpoonup\nabla\phi$ weakly in $L^2(\Omega)$, then we obtain
\[
\liminf_{k\to\infty}\int_{\Omega}\left|\nabla\phi_{n_k}\right|^2\mathrm{d}x\geq\int_{\Omega}\left|\nabla\phi\right|^2\mathrm{d}x,\quad\liminf_{k\to\infty}\int_{\partial\Omega}\left|\phi_{n_k}-\phi_{bd}\right|^2\mathrm{d}S_x\geq\int_{\partial\Omega}\left|\Gamma\phi-\phi_{bd}\right|^2\mathrm{d}S_x,
\]
and
\[
\lim_{k\to\infty}\phi_{n_k}=\phi~~\text{a.e. in}~\Omega.\]
Along with the Fatou's lemma, we have
\[
\liminf_{k\to\infty}\int_{\Omega}-F_\Lambda(\phi_{n_k})\,\mathrm{d}x\geq\int_{\Omega}-F_\Lambda(\phi)\,\mathrm{d}x,
\]
which gives
\[m_E=\lim_{k\to\infty}E[\phi_{n_k}]\geq E[\phi]\geq m_E,\]
and $E$ attains its minimum $m_E$ at $\phi\in H^1(\Omega)=\mathbb{H}_\eta(\Omega)$ for $\eta>0$.
As $\eta=0$ (i.e., the Dirichlet boundary condition), $B_\eta[\phi]=0$ so we may use the similar argument to prove the existence of minimizer and complete the proof.\end{proof}

The minimizer of Proposition~\ref{prop:6.2} satisfies equation \eqref{eq:1.12} with the Robin boundary condition \eqref{eq:1.08} in weak sense.
Hereafter, for simplicity, we denote the solution as $\phi$ instead of $\phi_\Lambda$ before.
Now we prove the regularity of solution $\phi$ as follows.
Because $\phi$ is a minimizer of the functional $E$ on $H^1(\Omega)$, $\phi$ satisfies
\begin{equation}
\label{eq:6.05}
\int_{\Omega}\left[\epsilon\nabla\phi\cdot\nabla v-v\rho_0-vf_\Lambda(\phi)\right]\mathrm{d}x+\frac1{\eta}\int_{\partial\Omega}\epsilon\left(\phi-\phi_{bd}\right)v\,\mathrm{d}S_x=0
\end{equation}
for any $v\in H^1(\Omega)$.
Let $\Phi$ be the solution of the auxiliary Poisson equation
\begin{equation}
\label{eq:6.06}
-\nabla\cdot\left(\epsilon\nabla\Phi\right)=\rho_0+f_\Lambda(\phi)\quad\text{in}~\Omega
\end{equation}
with the Robin boundary condition $\Phi+\eta\frac{\partial\Phi}{\partial\nu}=\phi_{bd}$ on $\partial\Omega$.
We multiply equation \eqref{eq:6.06} by $(\Phi-\phi)$ and integrate it over $\Omega$.
Then we may use integration by parts to obtain
\begin{equation}
\label{eq:6.07}
\int_{\Omega}\epsilon\nabla\Phi\cdot\nabla(\Phi-\phi)\,\mathrm{d}x-\int_{\Omega}(\Phi-\phi)\left[\rho_0+f_\Lambda(\phi)\right]\mathrm{d}x+\frac1{\eta}\int_{\partial\Omega}\epsilon(\Phi-\phi_{bd})(\Phi-\phi)\,\mathrm{d}S_x=0.
\end{equation}

Here we have used the fact $\frac{\partial\Phi}{\partial\nu}=\frac1\eta(\phi_{bd}-\Phi)$ on $\partial\Omega$. Besides, we subtract \eqref{eq:6.05} with $v=\Phi-\phi$ from \eqref{eq:6.07} and get
\[\int_{\Omega}\epsilon\left|\nabla\left(\Phi-\phi\right)\right|^2\mathrm{d}x+\frac1{\eta}\int_{\partial\Omega}\epsilon(\Phi-\phi)^2\mathrm{d}S_x=0,\]
which implies that $\Phi\equiv\phi$ a.e. in $\Omega$.
Thus, by a standard bootstrap argument on equation \eqref{eq:6.06}, solution $\phi$ becomes a classical solution.
On the other hand, as $\eta=0$, we can use a similar argument to prove the regularity of solution $\phi$.

Now we prove the uniqueness of equation \eqref{eq:1.12} with the Robin boundary condition \eqref{eq:1.08}.
Suppose that $\phi_1,\phi_2\in\mathcal{C}^{\infty}\left(\Omega\right)\cap\mathcal{C}^2\left(\overline{\Omega}\right)$ are solutions of equation \eqref{eq:1.12} with the Robin boundary condition \eqref{eq:1.08}.
We subtract equation \eqref{eq:1.12} with $\phi=\phi_2$ from that with $\phi=\phi_1$.
Then we have
\begin{equation}
\label{eq:6.08}
-\nabla\cdot(\epsilon\nabla(\phi_1-\phi_2))=f_\Lambda(\phi_1)-f_\Lambda(\phi_2)\quad\text{in}~\Omega.
\end{equation}
Multiplying the equation \eqref{eq:6.08} by $\phi_1-\phi_2$ and integrating it over $\Omega$, we get
\[-\int_{\partial\Omega}(\phi_1-\phi_2)\nabla\cdot(\epsilon\nabla(\phi_1-\phi_2))\mathrm{d}x=\int_{\Omega}\left(\phi_1-\phi_2\right)[f_\Lambda(\phi_1)-f_\Lambda(\phi_2)]\,\mathrm{d}x.
\]
Then using integration by parts, we obtain
\begin{equation}
\label{eq:6.09}
\int_\Omega\epsilon|\nabla(\phi_1-\phi_2)|^2\mathrm{d}x+\eta\in_{\partial\Omega}\epsilon\left|\frac{\partial(\phi_1-\phi_2)}{\partial\nu}\right|^2\mathrm{d}S_x=\int_{\Omega}(\phi_1-\phi_2)[f_\Lambda(\phi_1)-f_\Lambda(\phi_2)]\mathrm{d}x.
\end{equation}
Here we have used the fact $\phi_1-\phi_2=-\eta\frac{\partial(\phi_1-\phi_2)}{\partial\nu}$ on $\partial\Omega$ which comes from the Robin boundary condition \eqref{eq:1.08}.
By Proposition~\ref{prop:2.2},
\begin{equation}
\label{eq:6.10}
\left(\phi_1-\phi_2\right)[f_\Lambda(\phi_1)-f_\Lambda(\phi_2)]\leq0\quad\text{in}~\Omega.
\end{equation}
Thus, by \eqref{eq:6.09} and \eqref{eq:6.10}, we have
\[
\int_{\Omega}\epsilon\left|\nabla\left(\phi_1-\phi_2\right)\right|^2\mathrm{d}x+\eta\int_{\partial\Omega}\epsilon\left|\frac{\partial\left(\phi_1-\phi_2\right)}{\partial\nu}\right|^2\mathrm{d}S_x\leq0,
\]
which implies that $\phi_1=\phi_2$ in $\overline{\Omega}$ so we complete the proof of uniqueness and conclude the following theorem.
\begin{thm}
\label{thm:6.3}
Under the same hypotheses in Theorem~\ref{thm:1.1}, equation \eqref{eq:1.12} with the Robin boundary condition \eqref{eq:1.08} has a unique solution $\phi_{\Lambda}\in\mathcal{C}^{\infty}\left(\Omega\right)\cap\mathcal{C}^2\left(\overline{\Omega}\right)$ for $\Lambda>0$.
\end{thm}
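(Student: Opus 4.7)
The plan is to obtain the existence of a weak solution via the Direct method applied to the energy functional $E[\phi]=E_{eq}[\phi]+B_\eta[\phi]$ on $\mathbb{H}_\eta(\Omega)$, then upgrade the weak solution to a classical one by a bootstrap argument on an auxiliary Poisson equation, and finally deduce uniqueness from the strict monotonicity of $f_\Lambda$ (Proposition~\ref{prop:2.2}). The key structural fact driving every step is that $F_\Lambda'=f_\Lambda$ is strictly decreasing, so $-F_\Lambda$ is strictly convex and bounded above on $\mathbb{R}$.

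First I would show coercivity of $E$ on $H^1(\Omega)$ (for $\eta>0$) by estimating the boundary term $B_\eta[\phi]$ via Young's inequality to obtain a positive quadratic contribution of $\int_{\partial\Omega}\epsilon\phi^2\,\mathrm{d}S_x$, combining it with $\int_\Omega\epsilon|\nabla\phi|^2\,\mathrm{d}x$, and then invoking Friedrichs' inequality to absorb $\int_\Omega\phi^2\,\mathrm{d}x$. Using that $F_\Lambda$ has a finite absolute maximum $M_F$ (by Propositions~\ref{prop:2.2} and \ref{prop:2.4}, $F_\Lambda$ is strictly concave with $f_\Lambda$ crossing zero), the term $-\int_\Omega F_\Lambda(\phi)\,\mathrm{d}x$ is bounded below by $-M_F|\Omega|$; the only remaining term $\int_\Omega\rho_0\phi\,\mathrm{d}x$ is handled by Cauchy--Schwarz and absorbed into the quadratic part.

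Next I would take a minimizing sequence $\{\phi_n\}$; coercivity gives uniform $H^1$- and $L^2(\partial\Omega)$-bounds, so a subsequence converges weakly in $H^1(\Omega)$, weakly in $L^2(\partial\Omega)$, and a.e. in $\Omega$ to some $\phi\in\mathbb{H}_\eta(\Omega)$. Weak lower semicontinuity of the quadratic parts together with Fatou's lemma applied to $-F_\Lambda(\phi_n)$ (using $-F_\Lambda\geq -M_F$) yields $E[\phi]\leq\liminf E[\phi_n]$, producing a minimizer. Taking the first variation along arbitrary $v\in H^1(\Omega)$ shows this minimizer is a weak solution of \eqref{eq:1.12} with the Robin condition \eqref{eq:1.08}.

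For regularity, I would freeze the right-hand side and consider the linear auxiliary problem $-\nabla\cdot(\epsilon\nabla\Phi)=\rho_0+f_\Lambda(\phi)$ with the same Robin datum; subtracting its weak formulation from that of $\phi$ tested against $\Phi-\phi$ yields the identity $\int_\Omega\epsilon|\nabla(\Phi-\phi)|^2\,\mathrm{d}x+\frac{1}{\eta}\int_{\partial\Omega}\epsilon(\Phi-\phi)^2\,\mathrm{d}S_x=0$, hence $\Phi\equiv\phi$. A standard bootstrap using elliptic regularity for the linear equation, together with Proposition~\ref{prop:2.1} which guarantees $f_\Lambda$ is smooth, promotes $\phi$ to $\mathcal{C}^\infty(\Omega)\cap\mathcal{C}^2(\overline{\Omega})$. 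Finally, for uniqueness, if $\phi_1,\phi_2$ are two classical solutions, I subtract the equations, multiply by $\phi_1-\phi_2$, integrate by parts, and use the Robin condition to obtain
\[
\int_\Omega\epsilon|\nabla(\phi_1-\phi_2)|^2\,\mathrm{d}x+\eta\int_{\partial\Omega}\epsilon\Bigl|\tfrac{\partial(\phi_1-\phi_2)}{\partial\nu}\Bigr|^2\mathrm{d}S_x=\int_\Omega(\phi_1-\phi_2)\bigl[f_\Lambda(\phi_1)-f_\Lambda(\phi_2)\bigr]\mathrm{d}x\leq 0,
\]
where the inequality uses the strict monotonicity of $f_\Lambda$; this forces $\phi_1\equiv\phi_2$. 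The main technical obstacle is the coercivity step: the functional $E$ contains no zero-order term in $\Omega$, so coercivity in the full $H^1$-norm relies delicately on the boundary penalty $B_\eta$ (which degenerates as $\eta\to\infty$) interacting with Friedrichs' inequality, and this is the only place where the Robin constant $\eta>0$ plays an essential role; the $\eta=0$ (Dirichlet) case needs a parallel but separate argument on $\phi_{bd}+H_0^1(\Omega)$.
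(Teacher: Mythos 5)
Your proposal matches the paper's Appendix I proof essentially step for step: coercivity of $E$ via Young's inequality on $B_\eta$, Friedrichs' inequality, the bound $F_\Lambda\leq M_F$ from strict concavity, and Cauchy--Schwarz for the $\rho_0\phi$ term; existence by a minimizing sequence with weak lower semicontinuity and Fatou's lemma; regularity by comparing with the auxiliary Poisson problem and bootstrapping; and uniqueness via the energy identity combined with strict monotonicity of $f_\Lambda$. No gaps, and no genuinely different route to report.
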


\section{Appendix II.}

In this section, we prove that equations \eqref{eq:1.14}--\eqref{eq:1.17} and equations \eqref{eq:1.19}--\eqref{eq:1.21} have the same form (up to scalar multiples).
Firstly, we recall these equations as follows.
\begin{align}
\label{eq:7.01}&\sum_{i=0}^N\lambda_ic_i^*(\phi^*)=\tilde{\mu}_0,\\
\label{eq:7.02}&c_i^*(\phi^*)=(c_0^*(\phi^*))^{\lambda_i/\lambda_0}\exp(\bar{\mu}_i-z_i\phi^*)\quad\text{for}~i=1,\dots,N,\\
\label{eq:7.03}&-\nabla\cdot(\epsilon\nabla\phi^*)=\rho_0+\sum_{i=1}^Nz_ic_i^*(\phi^*)\quad\text{in}~\Omega,
\end{align}
and
\begin{align}
\label{eq:7.04}&\sum_{i=0}^Nv_ic_i(\psi)=1,\\
\label{eq:7.05}&\frac{v_i}{v_0}\ln(v_0c_0)-\ln(v_ic_i)=\beta q_i\psi-\beta\mu_i\quad\text{for}~i=1,\dots,N,\\
\label{eq:7.06}&-\nabla\cdot(\epsilon\nabla\psi)=\rho_0+\sum_{i=1}^N\beta q_ic_i(\psi)\quad\text{in}~\Omega.
\end{align}
From \eqref{eq:7.01} and \eqref{eq:7.04}, we may assume that $\tilde{\mu}_0=1$.
Then we have $\lambda_i=v_i$, $c_i^*=c_i$ and $\phi^*=\psi$ for $i=0,1,\dots,N$.
Moreover, by taking the natural logarithm on both sides of \eqref{eq:7.02}, we have
\begin{equation}\label{eq:7.07}
\frac{\lambda_i}{\lambda_0}\ln c_0^*(\phi^*)-\ln c_i^*(\phi^*)=z_i\phi^*-\bar{\mu}_i\quad\text{for}~i=1,\dots,N.
\end{equation}
Comparing \eqref{eq:7.05} and \eqref{eq:7.07} with $\phi^*=\psi^*$, we get
\[z_i=\beta q_i,\qquad \bar{\mu}_i=\beta\mu_i+\frac{v_i}{v_0}\ln v_0-\ln v_i\quad\text{for}~i=1,\dots,N.\]
Hence it is clear that \eqref{eq:7.03} and \eqref{eq:7.06} are identical.
Therefore, equations \eqref{eq:7.01}--\eqref{eq:7.03} and equations \eqref{eq:7.04}--\eqref{eq:7.06} have the same form (up to scalar multiples).

\end{document}